\documentclass[letterpaper,12pt]{article}
\usepackage{latexsym,amssymb,amsfonts,amsmath,amsthm,nccmath,enumitem,setspace,graphicx,float}
\usepackage[dvipsnames]{xcolor}
\usepackage[hidelinks]{hyperref}
\usepackage[margin=2.5cm]{geometry}
\usepackage{subcaption}
\usepackage{multicol}
\usepackage{dsfont}
\usepackage{bm}
\usepackage{float}
\usepackage{hyperref}
\usepackage{caption}
\usepackage{mathrsfs}

\usepackage[normalem]{ulem}

\newtheorem{theorem}{Theorem}[section]
\newtheorem{corollary}[theorem]{Corollary}
\newtheorem{lemma}[theorem]{Lemma}

\theoremstyle{definition}
\newtheorem{definition}[theorem]{Definition}

\newtheorem{example}[theorem]{Example}

\usepackage{tikz}
\tikzstyle{vertex}=[circle, draw=black, fill=black, minimum size=2pt, inner sep=2]
\usetikzlibrary{arrows, snakes,arrows.meta,calc,decorations.markings,math,arrows.meta}

\newcommand{\len}{\textrm{len}}
\DeclareMathOperator{\op}{OP}

\makeatletter

\tikzset{
  on each segment/.style={
    decorate,
    decoration={
      show path construction,
      moveto code={},
      lineto code={
        \path [#1]
        (\tikzinputsegmentfirst) -- (\tikzinputsegmentlast);
      },
      curveto code={
        \path [#1] (\tikzinputsegmentfirst)
        .. controls
        (\tikzinputsegmentsupporta) and (\tikzinputsegmentsupportb)
        ..
        (\tikzinputsegmentlast);
      },
      closepath code={
        \path [#1]
        (\tikzinputsegmentfirst) -- (\tikzinputsegmentlast);
      },
    },
  },
  mid arrow/.style={postaction={decorate,decoration={
        markings,
        mark=at position .5 with {\arrow[scale=1.2,#1]{stealth}}
      }}},
}

\usetikzlibrary{decorations.markings}
\tikzset{
    set arrow inside/.code={\pgfqkeys{/tikz/arrow inside}{#1}},
    set arrow inside={end/.initial=>, opt/.initial=},
    /pgf/decoration/Mark/.style={
        mark/.expanded=at position #1 with
        {
            \noexpand\arrow[\pgfkeysvalueof{/tikz/arrow inside/opt}]{\pgfkeysvalueof{/tikz/arrow inside/end}}
        }
    },
    arrow inside/.style 2 args={
        set arrow inside={#1},
        postaction={
            decorate,decoration={
                markings,Mark/.list={#2}
            }
        }
    },
}

\makeatother

\let\oldproofname=\proofname
\renewcommand{\proofname}{\rm\bf{\oldproofname}}

\usepackage{tikzsymbols}

\title{A complete solution to the directed Oberwolfach problem of order  $2 \pmod{4}$ with cycles of even lengths}
\author{A.~C.~Burgess\thanks{University of New Brunswick, Saint John, NB, E2L 4L5, Canada.  Email: andrea.burgess@unb.ca.}, P.~H.~Danziger\thanks{Toronto Metropolitan University, Toronto, ON, M5B 2K3, Canada. Email: danziger@torontomu.ca}, A.~Lacaze-Masmonteil\thanks{University of Regina, Regina, SK, S4S 0A2, Canada. Email: Email: alk004@uregina.ca.}}

\begin{document}

\maketitle

\begin{abstract}
The Oberwolfach problem asks for a $2$-factorization of the complete graph in which each $2$-factor is isomorphic to a specific factor $F$. Recently, this problem has been extended to directed graphs. In this case, the directed Oberwolfach problem asks for a directed 2-factorization of the complete symmetric digraph in which each directed $2$-factor is isomorphic to a specific directed factor $F$. In this paper, we consider the directed Oberwolfach problem with directed 2-factors comprised of cycles of even lengths. Specifically,  we provide a complete solution to this particular case when the order of the complete symmetric digraph is congruent to 2 modulo 4.

\end{abstract}

{\noindent \bf Keywords:} Directed graphs, decompositions, cycles, Oberwolfach problem, Mendelsohn designs. 

{\noindent \bf Mathematics Subject Classifications:}  05B30, 05C20, 05C51. 

\section{Introduction}

The Oberwolfach problem was first posed by Ringel in 1967~\cite{Ringel}, who asked the following: 
\begin{quote}
    In a venue with round tables of sizes $m_1, m_2, \ldots, m_t$, where $m_1+m_2+\cdots+m_t=n$, is it possible to find a seating plan for $n$ people over $(n-1)/2$ successive meals such that each person sits next to each other person exactly once?
\end{quote}
The Oberwolfach problem can be modeled as a graph factorization problem; to describe how, we first introduce the requisite terminology.  

Let $G$ be a graph.  We say that $G$ is {\em decomposed} into subgraphs $H_1, H_2, \ldots, H_s$ if the $H_i$ are pairwise edge-disjoint and their edge sets partition $E(G)$.  If $H_1 \cong H_2 \cong \cdots \cong H_s \cong H$, then we speak of an $H$-decomposition of $G$.  A {\em $k$-factor} of $G$ is a spanning $k$-regular subgraph of $G$; we will be particularly interested in the case $k=2$. In this case, a $2$-factor may be thought of as a collection of disjoint cycles whose lengths sum to the order of $G$.  A {\em $2$-factorization} of $G$ is a decomposition of $G$ into $2$-factors.  For a given $2$-factor $F$ of $G$, we will refer to an $F$-decomposition of $G$ as an $F$-factorization of $G$. We denote the 2-regular graph consisting cycles of length $m_1, m_2,\ldots, m_t$ (directed or undirected) by $[m_1, m_2, \ldots, m_t]$.  We use the standard exponential notation $[m_1^{\alpha_1}, m_2^{\alpha_2}, \ldots, m_t^{\alpha_t}]$ to denote a  $2$-regular graph containing $\alpha_i$ cycles of length $m_i$ for each $i \in \{1, \ldots, t\}$.

Ringel's original formulation of the Oberwolfach problem asks whether $K_n$ admits an $F$-factorization, where $F$ is a given $2$-factor of order $n$. As a $2$-factorization of $K_n$ is only possible when $n$ is odd, for even $n$ it is common to instead seek a factorization of the cocktail party graph $K_n-I$, formed by removing the edges of a $1$-factor $I$ from a complete graph of order $n$.  We thus consider the Oberwolfach problem, denoted $\op(F)$, to ask whether $K_n$ (if $n$ is odd) or $K_n-I$ (if $n$ is even) admits an $F$-factorization.  Various instances of $\op(F)$ have been solved, notably when $F$ is uniform (i.e.\ all cycles have the same length)~\cite{AlspachHaggkvist, ASSW, HoffmanSchellenberg}, bipartite (i.e.\ all cycles have even length)~\cite{BryantDanziger, Haggkvist}, or has two components~\cite{Traetta}.  The Oberwolfach problem has been completely solved for orders $n \leq 100$~\cite{DezaEtAl, Meszka, SalassaEtAl}; in this range, there are exactly four $2$-factors for which no solution exists, specifically $[3^2]$, $[3^4]$, $[4,5]$, and $[3^2,5]$.

Bryant and Scharaschkin~\cite{BryantScharaschkin} were the first to give a complete solution to the Oberwolfach problem for infinitely many orders, namely an infinite family of primes congruent to $1$~(mod~$16$) as well as orders of the form $2p$ where $p \equiv 5$~(mod~$24$) is prime; this result was extended to orders of the form $2p$ where $p$ is a prime congruent to $5$~(mod~$8$) in~\cite{ABHMS}.  It has recently been shown by non-constructive methods that a solution to $\op(F)$ exists for sufficiently large orders~\cite{GJKKO}.  However, despite the fact that this result stipulates that the set of unsolved cases is finite, no bound on the size of this set is known; thus the Oberwolfach problem remains wide open in general.  For further background on the Oberwolfach and related problems, we refer the reader to the survey~\cite{BDT-Survey}. 

In this paper, we will consider a variant of the Oberwolfach problem known as the {\em directed Oberwolfach problem} with tables of length $m_1, m_2,\ldots, m_t$, denoted $\op^*([m_1, m_2, \ldots, m_t])$. Introduced in \cite{BurSaj}, this problem considers a similar scenario as the Oberwolfach problem with the modified constraint that each guest be seated to the right of every other guest precisely once. The directed Oberwolfach problem is part of a recent surge of interest in directed variants of well-known cycle decomposition problems. For example, researchers have began to undertake the study of the question of existence of solutions to the directed Hamilton-Waterloo problem \cite{DirHamWateven, DirHamWat} as well as the directed Oberwolfach problem for the complete symmetric equipartite directed graph \cite{FranSaj}.

To formulate the directed Oberwolfach problem in graph-theoretic terms, we will require the following definition. A \emph{directed 2-factor of directed graph $G$ with directed cycles of length $m_1, m_2,\ldots, m_t$}, denoted $F=[m_1, m_2, \ldots, m_t]$, is a spanning subdigraph of $G$ comprised of the vertex-disjoint union of directed cycles of lengths $m_1, m_2, \ldots, m_t$. In \cite{BurSaj}, Burgess and \v{S}ajna posed the directed Oberwolfach problem as the question of the existence of a decomposition of the complete symmetric directed graph $K_n^*$ into copies of $F$. Although Burgess and \v{S}ajna were the first to formulate the directed Oberwolfach problem as a cycle decomposition problem, they remarked that the directed Oberwolfach problem could also be viewed as a design theoretic problem by constructing a design known as a resolvable Mendelsohn design \cite{Handbook}. This particular approach is one taken by the authors of \cite{BenZha, BerGerSot,  ZanDu}.

When $n$ is odd, any solution to $\op([m_1, \ldots, m_t])$ yields a solution of $\op^*([m_1, \ldots, m_t])$ by simply directing each 2-factor in the 2-factorization of $K_n$ in both orientations. Therefore, emphasis is placed on the case $n$ is even for $\op^*([m_1, m_2, \ldots, m_t])$. The directed Oberwolfach problem has been completely resolved for tables of uniform lengths. The complete solution to this particular case is given in Theorem \ref{Uniform} below.

\begin{theorem}[\cite{Abel, AdaBry, BenZha, BerGerSot, BurFranSaj, BurSaj, Alice, Til}] \label{Uniform}
Let $m \geq 2$ be an integer.  There exists a solution to $\op^*([m^{\alpha}])$ if and only if $(m,\alpha) \notin \{(4,1),(6,1), (3,2)\}$.
\end{theorem}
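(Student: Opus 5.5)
This theorem collects results from the literature cited with it, so the plan is to assemble those results case by case rather than to give a single uniform construction. Throughout, $n=m\alpha$ is the order of $K_n^*$. The argument splits into necessity (the three exceptions) and sufficiency (everything else).

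\emph{Necessity.} Each exception is small enough to settle exhaustively.
\begin{itemize}
\item $(m,\alpha)=(3,2)$: we need a resolvable Mendelsohn triple system of order $6$. Its nonexistence is a classical fact; a finite case check confirms it.
\item $(4,1)$: a directed Hamiltonian decomposition of $K_4^*$ would need three directed $4$-cycles covering the $12$ arcs. A short case analysis shows that, once two directed $4$-cycles are fixed, the remaining four arcs never form a directed $4$-cycle.
\item $(6,1)$: the nonexistence of a Hamiltonian decomposition of $K_6^*$ is a known computer-verified result due to Bermond and Faber.
\end{itemize}

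\emph{Sufficiency.} We split on the parity of $n$.
\begin{itemize}
\item \emph{$n$ odd.} Take a solution of $\op([m^\alpha])$, which exists by the uniform solution of the undirected Oberwolfach problem \cite{AlspachHaggkvist, ASSW, HoffmanSchellenberg} (its only exceptions, $[3^2]$ and $[3^4]$, have even order). Orient every $2$-factor in both directions. This gives a solution of $\op^*([m^\alpha])$.
\item \emph{$n$ even.} The undirected problem no longer helps, so we invoke the direct constructions by $m$. The case $m=2$ is trivial: $K_n^*$ has a $1$-factorization of $K_n$, and doubling each $1$-factor gives $[2^{n/2}]$. For $m=3$ we use the existence of resolvable Mendelsohn triple systems \cite{BerGerSot, BenZha}. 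For $m=4$ we use resolvable Mendelsohn $4$-cycle results \cite{BenZha, AdaBry, Abel}. For even $m\ge 6$ we use the work of Burgess, Francetić and \v{S}ajna \cite{BurFranSaj, BurSaj}. Odd $m$ with $\alpha$ even is handled in \cite{BurFranSaj, Alice}. The Hamiltonian case $\alpha=1$ (directed Hamiltonian decompositions of $K_n^*$ for even $n\neq 4,6$) is Tillson's theorem \cite{Til}.
\end{itemize}

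The main obstacle is checking that these cited cases genuinely cover every pair $(m,\alpha)$ with $m\alpha$ even. The hardest region is odd $m$ with even $\alpha$, where the published constructions involve residual small cases handled computationally. I would tabulate the covered parameter ranges to confirm that no gap remains beyond the three listed exceptions.
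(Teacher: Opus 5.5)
Your proposal is correct and matches the paper's treatment. The paper gives no proof of this theorem and simply cites the literature, which is exactly the case-by-case assembly you describe; the odd-$n$ doubling argument is even spelled out in the paper's introduction. One bookkeeping point for the coverage table you plan to build: \cite{BurFranSaj} is the paper on odd cycle length $m$, so it belongs with the odd-$m$, even-$\alpha$ case rather than with even $m\ge 6$.
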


Zhang and Du~\cite{ZanDu} provide a complete solution to $\op^*$ when the seating arrangements are comprised of $\alpha$ tables of length three and one table of length four or five. 

\begin{theorem}[\cite{ZanDu}]
Let $\ell \in \{4,5\}$. For every $\alpha \geq 1$, there exists a solution to $\op^*([3^{\alpha}, \ell])$. 
\end{theorem}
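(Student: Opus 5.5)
We would treat this by the parity of the order $n=3\alpha+\ell$, since the odd case reduces to the undirected problem. If $n$ is odd, a solution to $\op([3^\alpha,\ell])$ gives a solution to $\op^*([3^\alpha,\ell])$ by orienting each $2$-factor both ways, as remarked above. For $\ell=4$, $n$ is odd exactly when $\alpha$ is odd. For $\ell=5$, $n$ is odd exactly when $\alpha$ is even. In these cases we would invoke the existing undirected results for $2$-factors with two cycle lengths, or the complete solution of $\op$ for orders up to $100$ together with the asymptotic and constructive literature. The one undirected exception in this family is $[3^2,5]$, of order $11$. For it we would build a directed solution by hand, for instance cyclically over $\mathbb{Z}_{11}$, where we need a base directed $2$-factor whose $11$ arcs realize each nonzero difference once.

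For even $n$ (that is, $\ell=4$ with $\alpha$ even, and $\ell=5$ with $\alpha$ odd) we would use a $1$-rotational construction on $\mathbb{Z}_{n-1}\cup\{\infty\}$. We would take one base directed $2$-factor $F$ with $\infty$ lying on one cycle, and develop it modulo $n-1$ to obtain the $n-1$ required factors. The condition is that the $n-2$ arcs of $F$ not incident with $\infty$ realize every nonzero difference of $\mathbb{Z}_{n-1}$ exactly once. The two arcs at $\infty$ then automatically generate all arcs to and from $\infty$.

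We would place $\infty$ on the $\ell$-cycle, so this cycle contributes a directed path on $\ell-1$ vertices in $\mathbb{Z}_{n-1}$. Each directed triangle $(x,x+a,x+a+b)$ contributes the differences $a$, $b$ and $-(a+b)$. The problem thus becomes partitioning $\mathbb{Z}_{n-1}\setminus\{0\}$ into $\alpha$ triples $\{a,b,c\}$ with $a+b+c\equiv 0$, plus the differences of one short path. There is no requirement that a triple be closed under negation, which is where the directed setting gives more freedom than the undirected one. The translates of the triangles and the path must also be chosen to be vertex-disjoint so that $F$ spans, which is easy because the starting points $x$ can be chosen freely.

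The triples would come from Skolem- or Langford-type sequences, or from the known difference triple systems used for Mendelsohn triple systems, adjusted to leave out a small set of differences for the path. We would split into residue classes of $\alpha$ modulo $4$ (or modulo $6$), give an explicit general pattern for each class, and settle small orders by computer search. The main obstacle is exactly this step: finding uniform families of zero-sum triples missing the prescribed leftover differences in every residue class. For the few small values of $\alpha$ where the pattern breaks, we would fall back on direct computational constructions.
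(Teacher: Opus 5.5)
The paper gives no proof of this statement; it is quoted from Zhang and Du, so your proposal has to stand on its own. Its even-order half fails outright. In your $1$-rotational set-up the group $\mathbb{Z}_{n-1}$ has odd order, so the $n-2$ differences you must realize sum to $\sum_{d=1}^{n-2} d=\frac{n-2}{2}(n-1)\equiv 0 \pmod{n-1}$. The differences along any directed cycle avoiding $\infty$ telescope to $0$. Your remark that $a+b+c\equiv 0$ for a triangle holds for every such cycle, including the $\ell$-cycle when $\infty$ is placed elsewhere. Hence the differences along the leftover path $\langle y_0,\ldots,y_{\ell-2}\rangle$ also sum to $0$. This forces $y_{\ell-2}=y_0$, contradicting that the path has $\ell-1\geq 3$ distinct vertices. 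Putting $\infty$ on a triangle instead leaves a single arc whose difference would have to be $0$. Several base factors with short orbits do not help either. A nonzero translation fixing a factor must map the cycle through $\infty$ to itself while fixing $\infty$, hence fix that cycle pointwise, which is impossible. So the step you flag as the main obstacle is not merely hard but unsolvable, whatever Skolem- or Langford-type zero-sum triples you choose. It is the same parity obstruction as in Gordon's theorem that abelian groups of odd order are not sequenceable. It rules out any single-orbit construction over an abelian group of order $n-1$ for a $2$-factor with no $2$-cycles. The even orders need a genuinely different idea (for instance a recursive or design-theoretic construction), not a difference family over $\mathbb{Z}_{n-1}$.

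The odd-order half is also incomplete as written. Orienting an undirected solution both ways is fine, but the undirected results recalled in the introduction do not cover these families for all orders. They handle uniform, bipartite and two-table factors, all orders up to $100$, and all sufficiently large orders via a non-constructive theorem with no explicit threshold. None of this gives $\op([3^\alpha,4])$ for all odd $\alpha$ or $\op([3^\alpha,5])$ for all even $\alpha\neq 2$. You would need to cite a theorem specifically covering these families, or construct them directly. Finally, the proposed cyclic construction of $[3^2,5]$ over $\mathbb{Z}_{11}$ cannot work as described. There are only $10$ nonzero differences for the $11$ arcs of a factor, and a full orbit gives $11$ factors while $K_{11}^*$ needs $10$. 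That case, like every order at most $17$, is in any event covered by Theorem~\ref{SmallOrders}.
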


The directed Oberwolfach problem was also completely resolved for the case in which we have two tables, as given in Theorem~\ref{twotab}. 

\begin{theorem}[\cite{DanielAlice, KadriSajna}]\label{twotab}
Let $2 \leq m_1 \leq m_2$ be integers.  There exists a solution to $\op^*([m_1,m_2])$ if and only if $[m_1,m_2] \neq [3,3]$.
\end{theorem}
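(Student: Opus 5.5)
Theorem~\ref{twotab} is a cited result, but if I had to prove it directly I would split by the parity of $n=m_1+m_2$. \textbf{Necessity:} $[3,3]$ is excluded because $K_6^*$ has no resolvable Mendelsohn triple system; this is the case $(3,2)$ of Theorem~\ref{Uniform}. \textbf{Odd $n$:} when $n=m_1+m_2$ is odd, a solution to $\op([m_1,m_2])$ exists by Traetta's two-table theorem~\cite{Traetta}, except for the small exceptions $[4,5]$ and $[3^2]$ (the latter has even order anyway). Orienting each $2$-factor of $K_n$ in both directions gives $\op^*([m_1,m_2])$. The case $[4,5]$ would need a separate direct construction in $K_9^*$, found by computer or by a cyclic construction.

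\textbf{Even $n$, general strategy.} The main work is the case $n$ even. Here I would use a $1$-rotational construction on $\mathbb{Z}_{n-1}\cup\{\infty\}$. I look for a single base directed $2$-factor $F_0$ containing $\infty$ such that
\begin{itemize}
\item the arcs of $F_0$ not incident with $\infty$ realize every nonzero difference of $\mathbb{Z}_{n-1}$ exactly once, and
\item the two arcs at $\infty$ are $(\infty,a)$ and $(b,\infty)$.
\end{itemize}
Developing $F_0$ under $x\mapsto x+1$ then gives $n-1$ factors. These factors cover each arc of $K_{n-1}^*$ exactly once and each arc to or from $\infty$ exactly once, which is exactly the count needed in $K_n^*$.

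\textbf{Building $F_0$.} I would construct $F_0$ as two directed paths plus $\infty$. One cycle is a directed $m_1$-cycle in $\mathbb{Z}_{n-1}$, built with a zigzag ("Walecki-type") pattern $0,-1,1,-2,2,\dots$ so that its differences form a controlled set of consecutive values with signs. The other cycle passes through $\infty$, and its $\mathbb{Z}_{n-1}$-part is a directed path whose differences cover the remaining values. Because arcs are directed, each difference $d$ and $-d$ must appear separately. This gives a total of $n-2$ differences from $n-3$ internal arcs plus the two arcs at $\infty$, which needs care: the arc count is $m_1+(m_2-2)=n-2$, which matches. I would split into cases according to the parities of $m_1,m_2$ and their residues mod $4$, and give explicit labelings in each case. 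I expect to use "graceful-like" sequences, similar to Skolem/Langford-type difference sequences.

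\textbf{Main obstacle.} The hardest step is writing these explicit difference-covering paths uniformly for all pairs $(m_1,m_2)$, especially when $m_1$ is small ($2$, $3$, or $4$) and the order is small. In those cases the generic zigzag pattern breaks down and ad hoc constructions are needed. I would first try to make the general pattern work for $m_1\ge 5$. I would then handle $m_1\in\{2,3,4\}$ by separate families. For the remaining small orders, such as $n\le 12$, I would verify the cases by computer search.
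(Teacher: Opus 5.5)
The paper gives no proof of this statement; it only cites \cite{DanielAlice, KadriSajna}. Your necessity argument via the case $(3,2)$ of Theorem~\ref{Uniform} is fine.

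\textbf{The even case fails.} The base factor $F_0$ you describe cannot exist. Since $n-1$ is odd, the nonzero elements of $\mathbb{Z}_{n-1}$ pair off as $\{d,-d\}$ with $d\neq -d$, so they sum to $0$. In $F_0$ the arcs not incident with $\infty$ split into two groups:
\begin{enumerate}
\item the arcs of the $m_1$-cycle, whose differences telescope to $0$;
\item the arcs of the path from $a$ to $b$, whose differences telescope to $b-a$.
\end{enumerate}
These arcs realize every nonzero difference exactly once, so $b-a\equiv 0 \pmod{n-1}$, i.e.\ $a=b$. That is impossible for a directed path with at least one arc. Hence your $F_0$ exists only when $m_2=2$, i.e.\ for $F=[2,2]$.

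The same argument rules out any $1$-rotational construction under an abelian group of order $n-1$. A factor containing the fixed point $\infty$ has trivial stabilizer, so there is a single base factor with one arc of each difference. The telescoping sum then forces $\infty$ onto a $2$-cycle whenever $n$ is even. So no zigzag or Skolem/Langford-type labelling can reach any pair with $m_1\geq 3$, which is most of the even case. At best you could handle $[2,m_2]$ by putting $\infty$ on the $2$-cycle. A genuinely different construction is needed. As this paper remarks in Section~\ref{S:red}, the completion in \cite{DanielAlice} uses the H\"aggkvist-style reduction of $K_{2m}^*$ to one copy of $W_{2m}^*$ plus copies of $H_{2m}^*$ (Theorem~\ref{WH}), not a rotational difference method.

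\textbf{The odd case also has a hole.} Traetta's theorem concerns $2$-factorizations of the simple graph $K_n$, which contain no $2$-cycles. Orienting them both ways therefore never produces $[2,m_2]$ with $m_2$ odd, and these pairs need their own construction. For odd $n$ the obstruction above disappears: the nonzero elements of $\mathbb{Z}_{n-1}$ then sum to the involution $(n-1)/2$, so a rotational base factor with $b-a=(n-1)/2$ is not ruled out.

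Finally, $[4,5]$ needs neither a new construction nor a computer search. It has order $9$ and is covered by Theorem~\ref{SmallOrders}, as are all your leftover small orders $n\le 17$.
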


Kadri and Šajna \cite{KadriSajna} derived a recursive method that gave rise to several infinite families of solutions to the directed Oberwolfach problem. In addition, their results in conjunction with those of Quirion \cite{Qui}, completely settle the directed Oberwolfach problem for all $n \leqslant 17$. 

\begin{theorem}[\cite{KadriSajna, Qui}] \label{SmallOrders}
Let $m_1, \ldots, m_t$ be integers such that each $m_i \geq 2$ and $m_1 + \cdots +m_t \leq 17$.  There exists a solution to $\op^*([m_1, \ldots, m_t])$ if and only if $[m_1, \ldots, m_t] \notin \{[4], [6], [3,3]\}$.
\end{theorem}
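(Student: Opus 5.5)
The plan is to split the statement into its nonexistence part and its existence part. Nonexistence is immediate from earlier results. The factors $[4]$ and $[6]$ are the cases $(m,\alpha)\in\{(4,1),(6,1)\}$ of Theorem~\ref{Uniform}, and $[3,3]$ is the exception in Theorem~\ref{twotab} (it is also the case $(3,2)$ of Theorem~\ref{Uniform}). For existence, write $n=m_1+\cdots+m_t\le 17$ and treat odd and even $n$ separately.

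For odd $n$, I would use the observation from the introduction: every solution to $\op([m_1,\ldots,m_t])$ gives a solution to $\op^*([m_1,\ldots,m_t])$ by taking each $2$-factor of $K_n$ in both orientations. Since $\op$ is completely solved for $n\le 100$, this handles every odd-order factor except the undirected exceptions of odd order, namely $[4,5]$ (with $n=9$) and $[3^2,5]$ (with $n=11$). The factor $[4,5]$ has two components and is covered by Theorem~\ref{twotab}. That leaves only $[3^2,5]$, which needs an explicit directed $2$-factorization of $K_{11}^*$. I would look for one in cyclic form: take $\mathbb{Z}_{11}$ as the vertex set, choose one base $2$-factor whose arcs realise each nonzero difference exactly once, and develop it modulo $11$, finding the base factor by a short computer search.

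For even $n\in\{4,6,\ldots,16\}$, Theorem~\ref{Uniform} already covers uniform factors and Theorem~\ref{twotab} covers two-component factors. What remains is a finite list of mixed factors with at least three components, such as $[2^2,4]$, $[2,3^2]$, $[2,4^2]$, $[3^2,4]$ and so on. For these I would use a recursive reduction. Given a factor $F$ of order $n$, isolate a part $F'$ of order $n-k$ that is already known to have a solution. Then build the factors of $F$ by extending each $F'$-factor of $K_{n-k}^*$ with the arcs between the old vertices and $k$ new vertices, chosen so that the leftover arcs of $K_n^*$ also split into copies of $F$. The most natural version takes $k$ even and joins a $2$-cycle or $4$-cycle to existing cycles, which reduces $F$ to a factor already handled by Theorem~\ref{Uniform} or Theorem~\ref{twotab}. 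Any factor that this recursion does not reach I would construct directly, using the vertex set $\mathbb{Z}_{n-1}\cup\{\infty\}$ and a base factor developed modulo $n-1$, again found by computer search.

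I expect the main obstacle to be the recursive step. It has to be set up so that the extra arcs between the old and new vertices really do split into copies of $F$, and that need not hold for an arbitrary split of $F$. The fallback is exhaustive cyclic computer search, which may fail for a few small cases where no $1$-rotational solution exists. Those cases would need a search with a smaller automorphism group. I expect that to be necessary only for a handful of factors with $n\le 10$, where a direct backtracking search over all factorizations is still feasible.
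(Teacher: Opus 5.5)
The paper does not prove this statement; it is quoted from Kadri and \v{S}ajna, who settle $n<14$, together with Quirion's results up to $n=17$. Your nonexistence direction is fine, but the existence argument has genuine gaps.

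First, the odd-order case is less complete than you claim. The undirected Oberwolfach problem only concerns cycles of length at least $3$, so doubling a solution of $\op(F)$ can never produce a directed factor containing a $2$-cycle. Consider every odd-order factor with a $2$-cycle and at least three components: $[2^2,3]$ for $n=7$, then $[2^2,5]$, $[2,3,4]$, $[2^3,3]$ for $n=9$, and many more up to $n=17$. None of these is covered by the doubling argument or by Theorems~\ref{Uniform} and~\ref{twotab}. Your recursion and search are only set up for even $n$, so these cases are simply missing.

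Second, your construction for $[3^2,5]$ is impossible as stated. A directed $2$-factor of $K_{11}^*$ has $11$ arcs, while $\mathbb{Z}_{11}$ has only $10$ nonzero differences, so no base factor can realise each difference exactly once. Equivalently, developing modulo $11$ would give $11$ factors, whereas a factorization of $K_{11}^*$ has exactly $10$. You need the $1$-rotational set-up on $\mathbb{Z}_{10}\cup\{\infty\}$: nine arcs covering the nine nonzero differences modulo $10$, plus one arc into and one arc out of $\infty$.

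Third, for even $n$ the recursive step carries the entire difficulty and is left unspecified. Saying the arcs are \emph{chosen so that the leftover arcs of $K_n^*$ also split into copies of $F$} restates what must be proved. A naive extension fails immediately. With $k=2$ new vertices $a,b$, the arcs $ab$ and $ba$ can each be used only once, so you cannot attach a $2$-cycle on $\{a,b\}$ to each of the $n-3$ factors of $K_{n-2}^*$. Instead the old factors must be rerouted through the new vertices and two further factors built from the remaining arcs. That only works under specific numerical conditions, and supplying such a rerouting is exactly the content of Kadri and \v{S}ajna's recursive construction.

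What remains is a fallback to a computer search that you have not carried out and whose success you cannot guarantee; you already expect $1$-rotational searches to fail in some cases. An exhaustive search is a legitimate way to settle a finite statement, but then the proof is the search and its output, and the proposal supplies neither. As it stands it is a research plan rather than a proof.
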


In this paper, we will consider the case of $\op^*([m_1, m_2, \ldots, m_t])$ in which all directed cycles have even length, and solve this case when $n \equiv 2 \pmod{4}$.   In particular, our main result is the following theorem.

\begin{theorem}\label{thm:main}
    Let $F$ be a bipartite directed $2$-factor of order $n \equiv 2 \pmod{4}$.  There exists a solution to $\op^*(F)$ if and only if $F \neq [6]$.
\end{theorem}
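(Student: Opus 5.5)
Necessity is immediate: $[6]$ is a bipartite directed $2$-factor of order $6\equiv 2 \pmod 4$, and by Theorem~\ref{Uniform} (or Theorem~\ref{SmallOrders}) $\op^*([6])$ has no solution. So the work is in sufficiency. Write $n=4k+2$. Every bipartite $F$ of this order has an odd number of cycles of length $\equiv 2 \pmod 4$, since the cycle lengths sum to $2 \pmod 4$. The orders $n\le 14$ ($n\in\{6,10,14\}$) are covered by Theorem~\ref{SmallOrders}, so we may assume $n\ge 18$. The plan is to split $K_n^*$ into a directed part, where cycles can be "oriented twice", and a small residual part that needs genuinely directed constructions.

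\textbf{Decomposition.} Since $n\equiv 2\pmod 4$, the natural move is to view $K_n^*$ as $2(K_n-I)$ plus the directed $1$-factor $I^*$, i.e.\ $I$ with both orientations of each edge. By the undirected bipartite result of Bryant and Danziger / H\"aggkvist, $K_n-I$ has an $F$-factorization with $(n-2)/2$ factors. Directing each factor both ways gives $n-2$ directed copies of $F$, which cover every arc except those of $I^*$. We need $n-1$ copies in total, so this naive approach leaves $I^*$, a union of digons, uncovered. That cannot be a copy of $F$ unless $F=[2^{n/2}]$, and that uniform case is already settled by Theorem~\ref{Uniform}.

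To fix this, we should not use a complete factorization of $K_n-I$. Instead we take a carefully chosen subgraph $H$ of $K_n$ containing $I$, such that $K_n-E(H)$ is $F$-factorizable and $H^*$, the symmetric digraph on $H$, is directed-$F$-factorizable. A good concrete choice, following Häggkvist's method, is $n=2m$ with $m=2k+1$ odd. Write $K_{2m}$ as the lexicographic product $K_m[2]$ plus $I$, where $I$ consists of the $m$ "vertical" pairs. Then use the known $2$-factorization of $K_m$, for $m$ odd, into $(m-1)/2$ Hamilton cycles (Walecki). Each Hamilton cycle $C_m[2]$, which is $4$-regular on $2m$ vertices, decomposes into two copies of any bipartite $F$ of order $2m$ (Häggkvist's lemma, which applies since $F$ is bipartite). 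Doubling the orientations handles all but one Hamilton cycle $C$. The residual digraph is then $(C_m[2]\cup I)^*$, which is $10$-regular as a digraph, i.e.\ $5$ in/out, and must be decomposed into $5$ directed copies of $F$.

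\textbf{Main obstacle.} The key step is a lemma stating that $(C_m[2]\cup I)^*$ (equivalently, $C_m$ blown up with $K_2^*$ on each fibre) admits a directed $F$-factorization into $5$ factors, for every bipartite $F$ of order $2m$ with $m$ odd, $F\ne[6]$. I would prove it by reducing to "basic" factors: cut the vertex sequence around $C_m$ into consecutive blocks of sizes $m_i/2$ (each $m_i$ even), and build, for each block, directed cycles whose arcs use within-fibre arcs and arcs to the adjacent fibres. Then glue the blocks so that the $5$ factors fit together. Parity forces one block of odd size $m_i/2$ (the one coming from a $\equiv 2 \pmod 4$ cycle) to absorb the wrap-around. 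This needs separate small constructions, namely a few base cases such as $F=[6]\cup\ldots$, $[2,4]$, $[10]$ and $[2,\ldots]$ with many $2$-cycles, found by hand or computer for small $m$. Digons (length $2$) must be treated separately because they consume both arcs of a pair. I expect the case analysis for the odd block and small $m$ (the exceptional behaviour near $[6]$, e.g.\ $F=[6,4^j]$) to be the hardest part. Fallback constructions via Theorem~\ref{twotab} and Theorem~\ref{SmallOrders} may be needed for a few small orders.
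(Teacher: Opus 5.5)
Your reduction is sound as bookkeeping. $K_{2m}=K_m[K_2]$, a Walecki decomposition of $K_m$ leaves $\frac{m-3}{2}$ copies of $H_{2m}^*$ and one copy of $(C_m[K_2])^*$, and $4\cdot\frac{m-3}{2}+5=2m-1$. The gap is that all of the difficulty sits in your key lemma, that $(C_m[K_2])^*$ has a directed $F$-factorization for every bipartite $F\neq[6]$, and for it you give only a plan. Three things are missing:
\begin{itemize}
\item You never specify the boundary condition that makes the pieces of the five factors in adjacent blocks combine into five spanning $2$-regular digraphs.
\item You never explain how cycles of unbounded length arise from finitely many base cases.
\item The claim that an odd block must ``absorb the wrap-around'' is unexplained.
\end{itemize}

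These are exactly the ingredients the paper builds, but for a different residual. Via Theorem~\ref{WH} it keeps $W_{2m}^*$, which is your residual together with one more copy of $H_{2m}^*$ (the blow-up of the step-$2$ Hamilton cycle). It opens $W_{2m}^*$ to $J_{2m}^*$ and uses admissible decompositions with one fixed external pattern $\mathcal{X}$, so blocks overlapping in two fibres splice automatically (Lemmas~\ref{JtoW} and~\ref{SpliceDecomps}). Wrap-around is handled uniformly by admissibility, with no parity condition on block sizes. Long cycles come from left and right caps joined through length-$4$ centre pieces (Lemmas~\ref{ConcatenateCentre} and~\ref{LeftContinueRight}), and finitely many explicit bases finish the job (Lemmas~\ref{GeneralFactors} and~\ref{SmallFactors}).

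Your residual leaves much less room than $W_{2m}^*$: five factors instead of nine, and only adjacent-fibre arcs. Suppose every cycle lies on one side of a cut at fibre $i$. Then each of $x_i,y_i$ lies on a left-side cycle in exactly two or three of the five factors, and the arcs $x_iy_i$ and $y_ix_i$ must both go to the same side. So it is far from clear that the compatible building blocks your plan needs exist, especially for digons and short cycles, and you exhibit none.

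There is also a smaller gap in the $H_{2m}^*$ part. Lemma~\ref{HaggkvistLemma} concerns undirected $2$-factors, so doubling orientations yields $F$ only when $F$ has no directed $2$-cycles. With $s\ge 2$ digons you can merge them into an undirected $2s$-cycle and split its two orientations into digons using a $1$-factorization of that cycle. A single digon, however, needs a separate explicit construction, which is Lemma~\ref{Haggkvist_2-cycles}.
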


We structure this paper as follows. In Section \ref{S:prem}, we provide the necessary notation and definitions required to describe our solutions. Then, in Section~\ref{S:red}, we show that it suffices to construct the desired decompositions for two particular classes of directed graphs that require four and nine directed $2$-factors, respectively, in their decomposition. In the case of the first class of directed graphs, a complete solution is also given in Section~\ref{S:red}. The purpose of Section~\ref{S:main} is to complete the proof of Theorem~\ref{thm:main} by further reducing the question of existence of a 2-factorization of the second class of directed graphs to the existence of a particular set of directed paths and cycles, which we then give. 

\section{Notation and preliminaries} \label{S:prem}

First, we introduce pertinent terminology and notation. Given a directed graph (digraph for short), we denote its vertex set as $V(D)$ and its arc set by $A(D)$. An arc from $x$ to $y$ is written as $xy$. If $D_1$ and $D_2$ are two arc-disjoint subdigraphs of $D$, we write $D_1\oplus D_2$ to refer to the subdigraph of $D$ that is union of $D_1$ with $D_2$. As noted, we denote a $2$-regular graph with cycles of lengths $m_1, m_2, \ldots, m_t$ by $[m_1, m_2, \ldots, m_t]$.  We use the standard exponential notation $[m_1^{\alpha_1}, m_2^{\alpha_2}, \ldots, m_t^{\alpha_t}]$ to denote a  $2$-regular graph containing $\alpha_i$ cycles of length $m_i$ for each $i \in \{1, \ldots, t\}$. Similarly, for digraphs, we denote by $[m_1^{\alpha_1}, \ldots, m_t^{\alpha_t}]$ a collection of vertex-disjoint directed cycles containing $\alpha_i$ cycles of length $m_i$ for each $i \in \{1, \ldots, t\}$.  We call a digraph of the form $[m_1^{\alpha_1}, \ldots, m_t^{\alpha_t}]$ a {\em $2$-regular digraph}.  Note that, in a $2$-regular digraph, every vertex either has in- and out-degree $1$ or in- and out-degree 0. While we use the same notation $[m_1^{\alpha_1}, \ldots, m_t^{\alpha_t}]$ in the undirected and directed cases, it should be clear from context whether we are referring to a directed or undirected 2-regular graph. Lastly, a {\em $2$-factor} of a graph $G$ is a spanning subgraph of $G$ that is $2$-regular and a {\em directed $2$-factor} of digraph $D$ is a spanning subdigraph $D$ that is a directed 2-regular digraph. 

The directed cycle $C=(x_0,x_1,\ldots, x_m)$ contains arcs $x_0x_1$, $x_1x_2, \ldots, x_{m-1}x_m, x_mx_0$.  If $C=(x_0,x_1,\ldots, x_m)$, then we denote by  $\overleftarrow{C}$ the directed cycle obtained by traversing the arcs of $C$ in the opposite direction, i.e.\ $\overleftarrow{C}=(x_m, x_{m-1}, \ldots, x_1, x_0)$.  We denote the \emph{directed path (or dipath for short}) $P$ containing arcs $x_0x_1, x_1 x_2, \ldots, x_{m-1}x_m$ by $\langle x_0, x_1, \ldots, x_m \rangle$. For the directed path $P=\langle x_0, x_1, \ldots, x_m \rangle$, we say that $x_0$ is the {\em source} of $P$, denoted by $s(P)$ and $x_m$ is the {\em terminal} of $P$, denoted $t(P)$. The set of \emph{internal vertices} of a dipath $P$ is the set $V(P)\setminus\{s(P), t(P)\}$.  The {\em length} of a dipath $P$ is its number of arcs, denoted $\len(P)$.  Given paths $P=\langle x_0, x_1, \ldots, x_m\rangle$ and $Q=\langle x_m, x_{m+1}, \ldots, x_{m+n} \rangle$, where $t(P)=s(Q)=x_m$, we define the {\em concatenation} of $P$ and $Q$ as the walk $P+Q = \langle x_0, x_1, \ldots, x_m, x_{m+1}, \ldots, x_{m+n} \rangle$.  Note that $P+Q$ is a path if $P$ and $Q$ share only vertex $x_m$, and is a cycle if we also have that $s(P)=t(Q)$ but $P$ and $Q$ have no common internal vertices.

Given a graph $G$, let $G^*$ be the digraph obtained by replacing each edge $\{x,y\}$ of $G$ with arcs $xy$ and $yx$.  Thus, the arc set of $G^*$ is the set $A(G^*) = \{xy, yx \mid \{x,y\} \in E(G)\}$.  In particular, $K_n^*$ is the complete symmetric digraph and $\op^*(F)$ asks for a directed $2$-factorization of $K_n^*$ in which each directed $2$-factor is isomorphic to $F$.  

While the directed Oberwolfach problem asks for a $2$-factorization of $K_n^*$, we will also consider $2$-factorizations of auxiliary digraphs of the form $G^*$ where $G$ is a lexicographic product of graphs, which we now define.

\begin{definition}
Let $G$ and $H$ be graphs.  The {\em lexicographic (wreath) product of $G$ with $H$}, denoted $G[H]$, is the graph with vertex set $V(G) \times V(H)$, where $(g_1,h_1)$ and $(g_2,h_2)$ are adjacent if and only if either $\{g_1,g_2\} \in E(G)$, or $g_1=g_2$ and $\{h_1,h_2\} \in E(H)$.  
\end{definition}

\noindent
Informally, $G[H]$ may be thought of as being formed by blowing up the vertices of $G$ into copies of $H$, and replacing each edge of $G$ with a complete bipartite graph with parts of size $|V(H)|$.  

\begin{definition}
Let $m>1$ be an integer and $S\subseteq \mathbb{Z}_m \setminus \{0\}$ such that $S$ is inverse closed. The {\em circulant graph} Circ$(m, S)$ is the graph with vertex set $\mathbb{Z}_m$ and edge set $\{\{i,j\} \mid i-j\in S\}$. 
\end{definition}

We now define the underlying graph of the two auxiliary digraphs of interest.

\begin{definition}
For a positive integer $m$, let $H_{2m}$ denote the graph $C_m[\overline{K_2}]$. Define $W_{2m}$ to be the graph $\mathrm{Circ}(m,\{\pm 1,\pm 2\}) [K_2]$.
\end{definition}

For the graphs $H_{2m}$ and $W_{2m}$, as well as $J_{2m}$ defined in Section~\ref{S:main},  we take take the blowup of vertex $i$ to be $x_i$ and $y_i$. Figures~\ref{H-graph} and \ref{W-graphs} illustrate  $H_{14}$ and $W_{14}$, respectively. In each case, vertices with the same labels are identified.

\begin{figure}[htb]
\begin{center}
\begin{subfigure}{1\textwidth}
\centering
\begin{tikzpicture}[x=1.5cm,y=1.5cm,scale=1]
\foreach \x in {0,...,6}{
    \draw (\x,0) -- (\x+1,0);
    \draw (\x,1) -- (\x+1,1);
    \draw (\x,0) -- (\x+1,1);
    \draw (\x,1) -- (\x+1,0);
}
\foreach \x in {0,...,6}{
    \draw[ball color=black](\x,1) circle (3pt) node[above, inner sep=8pt]{$x_{\x}$};
    \draw[ball color=black](\x,0) circle (3pt) node[below, inner sep=8pt]{$y_{\x}$};
}

\draw[ball color=gray](7,1) circle (3pt) node[above, inner sep=8pt]{\color{gray} $x_{0}$};
\draw[ball color=gray](7,0) circle (3pt) node[below, inner sep=8pt]{\color{gray} $y_{0}$};

\end{tikzpicture}
\caption{The graph $H_{14}$.}
\label{H-graph}
\end{subfigure}

\begin{subfigure}{1\textwidth}
\centering    
\begin{tikzpicture}[x=1.5cm,y=1.5cm,scale=1]
\foreach \x in {0,...,6}{
    \draw (\x,0) -- (\x,1);
}
\foreach \x in {0,...,6}{
    \draw (\x,0) -- (\x+1,0);
    \draw (\x,1) -- (\x+1,1);
    \draw (\x,0) -- (\x+1,1);
    \draw (\x,1) -- (\x+1,0);
    \draw (\x,0) -- (\x+2,1);
    \draw (\x,1) -- (\x+2,0);
    \draw (\x,0) .. controls (\x+1,-0.25) .. (\x+2,0);
    \draw (\x,1) .. controls (\x+1,1.25) .. (\x+2,1);
}
\foreach \x in {0,...,6}{
    \draw[ball color=black](\x,1) circle (3pt) node[above, inner sep=10pt]{$x_{\x}$};
    \draw[ball color=black](\x,0) circle (3pt) node[below, inner sep=10pt]{$y_{\x}$};
}

\draw[ball color=gray](7,1) circle (3pt) node[above, inner sep=10pt]{\color{gray} $x_{0}$};
\draw[ball color=gray](7,0) circle (3pt) node[below, inner sep=10pt]{\color{gray} $y_{0}$};
\draw[ball color=gray](8,1) circle (3pt) node[above, inner sep=8pt]{\color{gray} $x_{1}$};
\draw[ball color=gray](8,0) circle (3pt) node[below, inner sep=8pt]{\color{gray} $y_{1}$};

\end{tikzpicture}
\caption{The graph $W_{14}$.}
\label{W-graphs}
\end{subfigure}
\end{center}
\caption{The underlying graphs of $H^*_{14}$ and $W^*_{14}$. Vertices with the same label are identified. }
\end{figure}
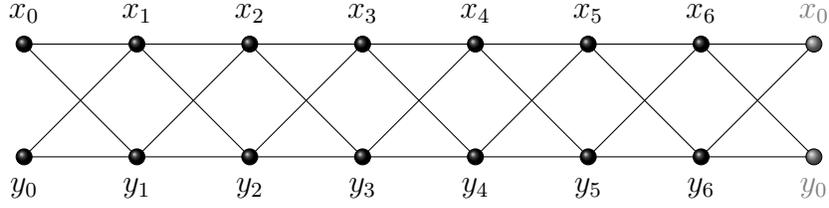
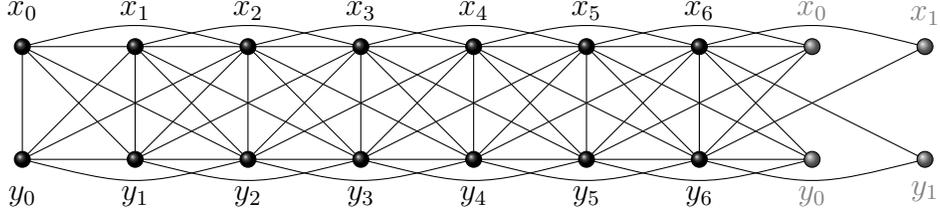

\section{Setup and approach} \label{S:red}

Our approach is inspired by the solution of the original undirected Oberwolfach problem for bipartite factors given in~\cite{BryantDanziger, Haggkvist}.  A similar approach was also recently successfully employed in~\cite{DanielAlice} to complete the solution of the two-table directed Oberwolfach problem.

Note that $\op^*(F)$ has been completely solved when $n<18$~\cite{KadriSajna, Qui}, so we may assume that $n \geq 17$, i.e.\ $m \geq 9$. In fact, our methods apply to all $m \geq 7$. The basic idea to solve $\op^*(F)$ in the case that $F$ is a directed $2$-factor of order $n=2m$, where $2m \equiv 2 \pmod{4}$, is as follows. First, we apply the following theorem to decompose $K_n^*$ into spanning subdigraphs isomorphic to $H_{n}^*$ and $W_{n}^*$. As a consequence of the proofs of Lemmas~10 and~13 of~\cite{DanielAlice}, Theorem \ref{WH} reduces the problem to finding $F$-factorizations of $H_n^*$ and $W_n^*$, which is the focus of this paper.  

\begin{theorem}[\cite{DanielAlice}] \label{WH}
Let $m \geq 7$ be and odd integer. There exists a decomposition of $K_{2m}^*$ into one copy of $W_{2m}^*$ and $\frac{m-5}{2}$ copies of $H_{2m}^*$.
\end{theorem}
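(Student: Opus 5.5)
Since $K_{2m}^* = (K_{2m})^*$, the plan is to decompose the underlying undirected graph $K_{2m}$ into one copy of $W_{2m}$ and $\frac{m-5}{2}$ copies of $H_{2m}$, and then take $^*$ of each piece. First I would check the edge count. $K_{2m}$ has $m(2m-1)$ edges. $H_{2m}=C_m[\overline{K_2}]$ has $4m$ edges. $W_{2m}=\mathrm{Circ}(m,\{\pm1,\pm2\})[K_2]$ has $m + 4\cdot 2m = 9m$ edges. Then
\[
9m + \tfrac{m-5}{2}\cdot 4m = 9m + 2m^2 - 10m = 2m^2 - m,
\]
as required.

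The decomposition itself comes from viewing $K_{2m}=K_m[K_2]$ with blow-up vertices $x_i,y_i$. Any decomposition of $K_m$ into subgraphs $G_1,\dots,G_s$ lifts to a decomposition of $K_m[K_2]$ into $G_1[\overline{K_2}],\dots,G_s[\overline{K_2}]$, together with the perfect matching $\{x_iy_i\}$. We also have $G[K_2]=G[\overline{K_2}]\cup\{x_iy_i\}$ when $G$ is spanning. It therefore suffices to decompose $K_m$, for odd $m\ge 7$, into one copy of $\mathrm{Circ}(m,\{\pm1,\pm2\})$ and $\frac{m-5}{2}$ Hamilton cycles. The matching is then absorbed into the lift of the circulant piece, giving $W_{2m}$, and each Hamilton cycle lifts to $C_m[\overline{K_2}]=H_{2m}$.

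To decompose $K_m$, write $K_m=\mathrm{Circ}(m,\{\pm1,\dots,\pm\frac{m-1}{2}\})$. Remove the circulant on connection set $\{\pm1,\pm2\}$; what remains is $\mathrm{Circ}(m,\{\pm3,\dots,\pm\frac{m-1}{2}\})$, which is $(m-5)$-regular. I need to split this remainder into $\frac{m-5}{2}$ Hamilton cycles. If $m$ is prime, each difference $d$ generates $\mathbb{Z}_m$, so each $\mathrm{Circ}(m,\{\pm d\})$ is already a Hamilton cycle and we are done.

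For general odd $m$, some differences $d$ have $\gcd(d,m)>1$, so $\mathrm{Circ}(m,\{\pm d\})$ is a union of shorter cycles. This is the main obstacle. I would handle it by pairing differences and invoking the known result (Bermond–Favaron–Maheo) that every connected $4$-regular Cayley graph on a finite abelian group is Hamilton-decomposable. Pair each bad difference $d$ with a difference $d'$ such that $\{\pm d,\pm d'\}$ generates $\mathbb{Z}_m$, for instance $d'=d\pm1$, since consecutive integers are coprime. Choosing a pairing of $\{3,\dots,\frac{m-1}{2}\}$ so that every non-generating difference is paired this way, and leaving generating differences as single Hamilton cycles, completes the decomposition; some parity care is needed when the number of remaining differences is odd.

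Alternatively, I would simply invoke the argument of \cite{DanielAlice} (Lemmas~10 and~13), where exactly this decomposition is constructed. Finally, applying $^*$ to each piece gives the stated decomposition of $K_{2m}^*$ into $W_{2m}^*$ and $\frac{m-5}{2}$ copies of $H_{2m}^*$.
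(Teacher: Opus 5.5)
Your argument is correct, but the paper does not prove this statement at all: it imports it from the proofs of Lemmas~10 and~13 of \cite{DanielAlice}. Your route is a self-contained alternative and is sound. You write $K_{2m}^*=(K_m[K_2])^*$ and lift a decomposition of $K_m$ into $\mathrm{Circ}(m,\{\pm1,\pm2\})$ and $\frac{m-5}{2}$ Hamilton cycles, absorbing the matching $\{x_iy_i\}$ into the lifted circulant to get $W_{2m}$. All pieces are spanning, which is what Theorem~\ref{W-reduction} needs, and the edge count checks.

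The one loose end you flagged, parity, closes immediately. Any common divisor of $\frac{m-1}{2}$ and $m$ divides $m-1$ and $m$, so $\frac{m-1}{2}$ is always coprime to $m$. Pair the differences consecutively as $\{3,4\},\{5,6\},\ldots$, and when $m\equiv 3\pmod 4$ leave $\frac{m-1}{2}$ as the lone single difference. Each pair $\{\pm d,\pm(d+1)\}$ consists of four distinct elements, since $m$ is odd and $2d+1<m$. It generates $\mathbb{Z}_m$ because $\gcd(d,d+1)=1$, so the Bermond--Favaron--Maheo theorem applies.

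Compared with the bare citation, your version makes the result self-contained modulo that theorem, and fully elementary when $m$ is prime. It also shows why $m$ must be odd. For even $m$, $K_m$ has odd degree and cannot be split into a $4$-regular circulant plus Hamilton cycles. This is consistent with the paper's remark that its $W_{2m}^*$-factorizations for even $m$ do not obviously yield factorizations of $K_{2m}^*$.
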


To factor $H_{2m}^*$, we generalize a result of H\"{a}ggkvist~\cite{Haggkvist} to the directed case. In particular, H\"{a}ggkvist proved the following lemma.

\begin{lemma}[\cite{Haggkvist}] \label{HaggkvistLemma}
Let $m>1$ be an integer. For every bipartite $2$-factor $F$ of order $2m$, there exists an $F$-factorization of $H_{2m}$.
\end{lemma}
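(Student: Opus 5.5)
The plan is to build the two $2$-factors of $H_{2m}$ directly; since $H_{2m}$ is $4$-regular, an $F$-factorization consists of exactly two copies of $F$. Write $F=[2k_1,2k_2,\ldots,2k_t]$. Since $F$ is a bipartite simple $2$-factor, every $k_i\ge 2$, and $k_1+\cdots+k_t=m$. I will refer to $\{x_j,y_j\}$ as \emph{column} $j$ of $H_{2m}$. Between columns $j$ and $j+1$ there is a copy of $K_{2,2}$. Within a column the two vertices are interchangeable: the map swapping $x_j$ and $y_j$ and fixing everything else is an automorphism of $H_{2m}$. So we may relabel the top and bottom of any column freely, and this removes any parity obstruction when going around $C_m$.

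First step: partition the cyclic sequence of columns $0,1,\ldots,m-1$ into consecutive blocks $B_1,\ldots,B_t$. Block $B_i$ spans $k_i+1$ columns $j,j+1,\ldots,j+k_i$, and consecutive blocks share exactly one end column. Because $\sum k_i=m$, the blocks close up exactly around $C_m$.

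In block $B_i$, take the cycle
\[(x_j,x_{j+1},\ldots,x_{j+k_i},y_{j+k_i-1},y_{j+k_i-2},\ldots,y_{j+1}).\]
It uses the top vertex of all $k_i+1$ columns and the bottom vertex of the $k_i-1$ interior columns, so it has length $2k_i$. Its closing edges $x_{j+k_i}y_{j+k_i-1}$ and $y_{j+1}x_j$ are edges of $H_{2m}$. At each shared end column, one block uses $x$ and the next uses $y$; after relabeling columns this is consistent. The union of these cycles is a spanning copy $F_1$ of $F$.

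Second step: show that the complement $F_2=H_{2m}-E(F_1)$ is also isomorphic to $F$. I would compute $F_2$ block by block. Inside the interior of each block it contains the "other" straight edges $y_jy_{j+1}$ and the cross edges not used. I expect it to decompose again into cycles of lengths $2k_i$, but shifted, for instance by reflecting each block or sliding it by one column. The aim is to exhibit an explicit automorphism of $H_{2m}$, a composition of a rotation or reflection of $C_m$ with column swaps, that maps $F_1$ onto $F_2$. If that works, it immediately gives the second factor.

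The main obstacle is exactly this verification that the leftover edges form cycles of the right lengths rather than merging across block boundaries. Near the shared columns the leftover edges from adjacent blocks meet, and the cycle lengths of $F_2$ could come out as $2k_i\pm 2$. If the naive block choice fails, I would first try to vary the block cycle, for example using cross edges at the end of each block, so that the leftover in each block is itself a $2k_i$-cycle supported on the same block. Otherwise I would handle the boundary by offsetting the blocks of $F_2$ by one column relative to those of $F_1$, so that $F_2$'s block for $k_i$ spans columns $j+1,\ldots,j+k_i+1$. In the worst case I would treat small $k_i$ (notably $k_i=2$, where the block is just a $4$-cycle $K_{2,2}$ plus boundary) separately.
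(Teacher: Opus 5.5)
Your block construction is the right one. The paper does not prove this lemma itself (it cites H\"aggkvist), but the same pair of block cycles, a ``straight'' cycle and its zigzag complement, appears as $C^0_{m_i}$, $C^2_{m_i}$ in the proof of Lemma~\ref{Haggkvist_2-cycles}. As written, however, your proposal does not prove the statement. The only nontrivial step is that $F_2=H_{2m}-E(F_1)$ is again a copy of $F$, and you leave it as an expectation, with fallback plans and a worry that lengths $2k_i\pm 2$ might appear. That step does go through for your naive choice, and you need to carry it out.

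To make step one clean, fix the endpoints directly instead of invoking relabeling. In block $B_i$ (columns $j,\ldots,j+k$, $k=k_i$) take $(x_j,x_{j+1},\ldots,x_{j+k-1},y_{j+k},y_{j+k-1},\ldots,y_{j+1})$. Then at every shared column, including column $m\equiv 0$, the left block uses $y$ and the right block uses $x$.

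Every edge of $H_{2m}$ joins consecutive columns, and each pair of consecutive columns lies in exactly one block, so the complement can be computed blockwise. In $B_i$ the $2k$ unused edges are:
\begin{itemize}
\item $y_jx_{j+1}$ and $y_jy_{j+1}$;
\item the cross edges $x_cy_{c+1}$ and $y_cx_{c+1}$ for $j+1\le c\le j+k-2$;
\item $x_{j+k-1}x_{j+k}$ and $y_{j+k-1}x_{j+k}$.
\end{itemize}
These form a single cycle of length $2k$. It consists of the two zigzag paths $y_j,x_{j+1},y_{j+2},x_{j+3},\ldots$ and $y_j,y_{j+1},x_{j+2},y_{j+3},\ldots$ through columns $j+1,\ldots,j+k-1$, both closed up at $x_{j+k}$. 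For $k=2$ it is $(y_j,x_{j+1},x_{j+2},y_{j+1})$.

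These complementary cycles never leave their blocks. At a shared column, the one from the left block uses $x$ and the one from the right block uses $y$. These are exactly the vertices not used there by the corresponding $F_1$-cycles, so the complementary cycles are vertex-disjoint. Hence $F_2\cong[2k_1,\ldots,2k_t]=F$, and your worry about merging across block boundaries does not materialize.

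Equivalently, $F_2=\phi(F_1)$, where $\phi$ swaps $x_c\leftrightarrow y_c$ at every shared column and at alternate interior columns of each block. So your automorphism idea also works, with no rotation or reflection needed. With this verification added the proof is complete, and none of the fallback variants are needed.
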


Lemma~\ref{HaggkvistLemma} immediately shows that, for any bipartite directed $2$-factor $F=[m_1,\ldots,m_t]$ not containing a directed $2$-cycles, the digraph $H_{2m}^*$ admits an $F$-factorization. Namely, one simply directs the cycles in an $[m_1, \ldots, m_t]$-factorization of $H_{2m}$ in both direction. The following lemma settles the case in which $F$ contains directed $2$-cycles. 

\begin{lemma} \label{Haggkvist_2-cycles}
Let $F=[m_1, m_2, \ldots, m_t]$ be a directed 2-factor such that $m_1 \leq m_2 \leq \ldots \leq m_t$, and suppose that $F$ has $s \geq 1$ directed cycles of length two. There exists an $F$-factorization of $H^*_{2m}$. 
\end{lemma}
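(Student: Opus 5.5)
We aim to reduce the statement to Lemma~\ref{HaggkvistLemma}. Note that $H_{2m}^*$ is $4$-regular in each direction (in- and out-degree $4$), so an $F$-factorization consists of exactly four directed $2$-factors. The directed $2$-cycles in $F$ cannot come from orienting an undirected cycle in both directions, so we treat them separately. Write $F = [2^s, m_{s+1}, \ldots, m_t]$, where each $m_i \geq 4$ is even for $i>s$. Let $F' = [m_{s+1},\ldots,m_t]$, an undirected bipartite $2$-regular graph on $2m-2s$ vertices.

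The plan is to partition the index cycle $C_m$ into consecutive segments. First, choose a segment of $s$ consecutive indices, say $\{0,1,\ldots,s-1\}$. On it, place the $2$-cycles $(x_i,y_{i+1})$ and $(y_i,x_{i+1})$, together with the ``parallel'' $2$-cycles $(x_i,x_{i+1})$ and $(y_i,y_{i+1})$; these use both arcs of each edge between consecutive blowups. The larger cycles of $F$ must then be built from a structure on the remaining vertices. Lengths do not match exactly, however: a directed $2$-factor needs $s$ two-cycles covering $2s$ vertices. So instead I would work as follows. Take an undirected $2$-factor $F'' = [4^{\lceil s/2\rceil}, m_{s+1},\ldots,m_t]$ when $s$ is even; here each $4$-cycle of $F''$ is to be replaced by two directed $2$-cycles. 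A $4$-cycle in $H_{2m}$ of the form $(x_i,x_{i+1},y_i,y_{i+1})$ uses exactly the edges of the $K_{2,2}$ between blowups $i$ and $i+1$. Its two orientations can be re-split into arc-disjoint pairs of $2$-cycles, namely $\{(x_i,x_{i+1}),(y_i,y_{i+1})\}$ and $\{(x_i,y_{i+1}),(y_i,x_{i+1})\}$. This works provided the $F''$-factorization from Lemma~\ref{HaggkvistLemma} has all its $4$-cycles in this ``$K_{2,2}$-block'' form. So the key step is to revisit Häggkvist's construction (or rebuild it) to guarantee that the designated $4$-cycles sit on a single $K_{2,2}$ between consecutive blowups. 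If so, the $2$-factors $F_1,F_2$ of $H_{2m}$ yield four directed factors $\vec F_1,\overleftarrow{F_1},\vec F_2,\overleftarrow{F_2}$. In each of these, every designated $4$-cycle is replaced by a pair of $2$-cycles: each directed $4$-cycle plus its reverse is exchanged for the two pairs above. The result is four copies of $F$.

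For odd $s$, one $2$-cycle pair remains unmatched. I would handle it by taking $F''=[4^{(s-1)/2}, 2\cdot?]$, which is not directly possible. Instead, I would absorb one $2$-cycle together with another component. Since $n=2m$ and the remaining lengths are even, $2m-2s$ is even, and some $m_i\ge 4$ exists unless $F=[2^m]$. In the case $F=[2^m]$, I would simply take all $2m$ vertices and use a direct construction: if $m$ is even, pair the blowups; if $m$ is odd, treat the index $3$-segment specially with an explicit small factorization of the $H^*$-piece on a triangle's worth of blowups. Otherwise, I would merge one $2$-cycle with an $m_i$-cycle into an undirected $(m_i+2)$-cycle in $F''$. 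I would then require that this cycle, in each orientation, can be rerouted into a directed $2$-cycle on a $K_{2,2}$ block plus a directed $m_i$-cycle, using a local exchange of arcs between the forward and reverse copies.

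The main obstacle is controlling the shape of the factorization from Lemma~\ref{HaggkvistLemma}. The lemma as stated is a black box, and I need its short cycles to lie in specified positions (consecutive blowups). I expect to redo Häggkvist's path-based construction explicitly: cycles of $H_{2m}$ are built by zig-zagging along consecutive blowups, and a $4$-cycle placed on one $K_{2,2}$ block is natural there. The odd-$s$ merging step and the small exceptional case $[2^m]$ with $m$ odd will need explicit verification.
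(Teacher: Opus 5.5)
\textbf{Verdict: there is a genuine gap, in exactly the case the paper has to construct by hand, namely $s=1$.}

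Your odd-$s$ mechanism merges the lone digon into an undirected $(m_i+2)$-cycle $C$ and then exchanges arcs between $\vec C$ and $\overleftarrow{C}$. This cannot work. The only directed cycles whose arcs all lie in $C^*$ are digons on edges of $C$ and the two orientations of $C$ itself. So no re-splitting of $\vec C\cup\overleftarrow{C}$ yields a digon together with a directed $m_i$-cycle for $m_i\ge 4$.

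The obstruction is general. Suppose an undirected $2$-factor $G$ from Lemma~\ref{HaggkvistLemma} is turned into two directed factors using only the arcs of $G^*$. Each component of $G$ has length $\ell\ge 4$, since $H_{2m}$ is simple. In either directed factor, that component becomes either one directed $\ell$-cycle or $\ell/2\ge 2$ digons. So a directed factor with exactly one digon never arises.

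Hence $s=1$ cannot come from Lemma~\ref{HaggkvistLemma} applied factor by factor. You must trade arcs between different undirected factors, i.e.\ build the factorization directly. The paper does this explicitly:
\begin{itemize}
\item The digons $(x_0,x_{m-1})$, $(y_0,x_{m-1})$, $(y_0,y_{m-1})$, $(x_0,y_{m-1})$ use up all arcs between blowups $m-1$ and $0$.
\item The remaining cycles are written out along consecutive blowups.
\item The four $m_2$-cycles are tailored, and are not reverses of one another, so as to pick up whichever of $x_0,y_0$ the digon leaves free.
\item The later cycles come in reversed pairs.
\end{itemize}
The same failure affects your plan for every odd $s$. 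For $s\ge 3$, however, it is easily avoided, as described next.

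The obstacle you flag for even $s$ is not real. Let $C$ be any even cycle with perfect matchings $I_1,I_2$. The arcs of $\vec C$ and $\overleftarrow{C}$ are exactly those of $I_1^*\cup I_2^*$. So every $4$-cycle, whether or not it sits on a single $K_{2,2}$ block, can be traded for two pairs of digons, and there is no need to reopen H\"{a}ggkvist's construction.

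The paper applies this to a single cycle. For every $s\ge 2$, odd or even, it takes a $[2s,m_{s+1},\ldots,m_t]$-factorization of $H_{2m}$, orients the other cycles both ways, and replaces the two orientations of the $2s$-cycle by $I_1^*$ and $I_2^*$. This also covers odd $s\ge 3$. It covers $F=[2^m]$ as well, by taking the Hamiltonian case $[2m]$. So your separate and rather vague treatment of $[2^m]$ with $m$ odd is unnecessary.
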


\begin{proof}
From the hypothesis, it follows that $m_i=2$ for $i \in \{1, \ldots, s\}$ for some $s\geq 1$.  

If $s \geq 2$, by Lemma~\ref{HaggkvistLemma}, $H_{2m}$ admits a  $[{2s}, {m_{t-s}}, {m_{t-s+1}}, \ldots, {m_{t}}]$-factorization, $\cal H$. For each 2-factor in $\cal H$, we direct the cycles of size $m_i$, $i\in\{t-s,\ldots, t\}$, each way, giving two directed partial factors $\{F_1, F_2\}$. Now, take a 1-factorization, $\{I_1, I_2\}$, of the cycle of length $2s$. For each edge in $I_i$, $i\in \{1,2\}$, we replace it with the corresponding directed 2-cycle. This gives two sets of 2-cycles $I_1^*$ and $I_2^*$ that are both vertex-disjoint from $F_1$ and $F_2$. Now, for each $i \in \{1,2\}$, $F_i\cup I_i^*$ is a factor of $H_{2m}^*$ with the prescribed cycle structure. Repeating this for each 2-factor in $\cal H$ yields the required directed 2-factorization of $H^*_{2m}$.

We now consider the case $s=1$.  This means that $F=[2,m_2,m_3,\ldots,m_t]$. Note that we will require four directed 2-factors in our decomposition. Therefore, for each cycle length, we will construct four directed cycles.  First, we construct the following four directed cycles of length 2:

$$C_2^0=(x_0, x_{m-1});~~ C_2^1=(y_0, x_{m-1});~~C_2^2=(y_0, y_{m-1});~~C_2^3=(x_0, y_{m-1}). 
$$

\noindent For each $i \in \{2, \ldots, t\}$, let $m_i=2k_i$, where $k_i \geqslant 2$. We give a separate construction for the four directed cycles of length $m_2$.  

\noindent If $m_2 \equiv 0 \ (\textrm{mod}\ 4)$, then $k_2$ is even. Define
\[
    \begin{aligned} 
&C_{m_2}^0=(y_0, y_1, y_2, \ldots, y_{k_2}, x_{k_2-1}\, x_{k_2-2}, \ldots, x_4, x_3, x_2, x_1); \\
&C_{m_2}^1=(x_{0}, x_1, y_2, x_3, y_4, \ldots, x_{k_2-1}, y_{k_2}, y_{k_2-1}, x_{k_2-2}, y_{k_2-3}, x_{k_2-4}, y_{k_2-5}, \ldots, x_4, y_3, x_2, y_{1}); \\
&C_{m_2}^2=(x_0, y_1, x_2, x_3, x_4, \ldots, x_{k_2}, y_{k_2-1}, y_{k_2-2}, \ldots, y_4, y_3, y_2, x_1);\\ 
&C_{m_2}^3=(y_0, x_1, x_2, y_3, x_4,\ldots, y_{k_2-1}, x_{k_2}, x_{k_2-1}, y_{k_2-2}, x_{k_2-3}, y_{k_2-4}, x_{k_2-5}, \ldots, y_4, x_3, y_2, y_1). \\
     \end{aligned}
\]
If $m_2 \equiv 2 \ (\textrm{mod}\ 4)$, then $k_2$ is odd. Define
\[
    \begin{aligned} 
&C_{m_2}^0= (y_0, y_1, y_2, \ldots, y_{k_2}, x_{k_2-1}, x_{k_2-2}, \ldots, x_4, x_3, x_2, x_1); \\
&C_{m_2}^1=(x_{0}, x_1, x_2, x_3, x_4, \ldots, x_{k_2-1}, y_{k_2}, y_{k_2-1}, y_{k_2-2}, y_{k_2-3},\ldots, y_2, y_{1}); \\
&C_{m_2}^2=(x_0, y_1, x_2, y_3, x_4, y_5, \ldots,  x_{k_2-1}, x_{k_2}, y_{k_2-1}, x_{k_2-2}, y_{k_2-3}, x_{k_2-4}, y_{k_2-5}, \ldots, y_4, x_3, y_2, x_1);\\ 
&C_{m_2}^3=(y_0, x_1, y_2, x_3, y_4, x_5, \ldots, y_{k_2-1}, x_{k_2}, x_{k_2-1}, y_{k_2-2}, x_{k_2-3}, y_{k_2-4},x_{k_2-5}, \ldots, x_4, y_3, x_2, y_1). \\
     \end{aligned}
\]
Now let $i \in \{3, \ldots, t\}$ and $a=\sum_{j=1}^{i-1} k_j$. 

\noindent If $m_i \equiv 0 \ (\textrm{mod}\ 4)$, then $k_i$ is even. We define
\[
    \begin{aligned} 
&C_{m_i}^0=(x_a, x_{a+1}, x_{a+2}, \ldots, x_{a+k_i-1}, y_{a+k_i}, y_{a+k_i-1}, y_{a+k_i-2}, \ldots, y_{a+1}); \\
&C_{m_i}^2=(y_{a}, x_{a+1}, y_{a+2}, x_{a+3}, y_{a+4}, \ldots, x_{a+k_i-1}, x_{a+k_i}, y_{a+k_i-1}, x_{a+k_i-2}, y_{a+k_i-3},\ldots, y_{a+1}); \\
&C_{m_i}^1=\overleftarrow{C_{m_i}^0};\; C_{m_i}^3=\overleftarrow{C_{m_i}^2}.\\ 
\end{aligned}
\]
If $m_i \equiv 2 \ (\textrm{mod}\ 4)$, then $k_i$ is odd. We define
\[
    \begin{aligned} 
&C_{m_i}^0=(x_a, x_{a+1}, y_{a+2}, x_{a+3}, y_{a+4},x_{a+5}, \ldots, y_{a+k_i-1}, y_{a+k_i}, x_{a+k_i-1},y_{a+k_i-2},x_{a+k_i-3}, \ldots, y_{a+1}); \\
&C_{m_i}^2=(y_{a}, x_{a+1}, x_{a+2}, x_{a+3}, x_{a+4}, \ldots, x_{a+k_i}, y_{a+k_i-1}, y_{a+k_i-2}, y_{a+k_i-3},\ldots, y_{a+1}); \\
&C_{m_i}^1=\overleftarrow{C_{m_i}^0};\; C_{m_i}^3=\overleftarrow{C_{m_i}^2}.\\ 
     \end{aligned}
\]
Finally, for each $i \in \{0,1,2,3\}$ we form the following subdigraph
\[
F_i=\{C_2^i, C_{m_1}^i, C_{m_2}^i,\ldots, C_{m_t}^i\}.
\]
It is tedious but straightforward to verify that directed cycles in $F_i$ span the vertex set meaning that $F_i$ is an $F$-factor. Furthermore, $F_i$ and $F_j$ are arc-disjoint for all $i, j \in \{0,1,2,3\}$ and $i\neq j$. Consequently, the set $\{F_i \mid i=0,1,2,3\}$ is the desired $F$-factorization. \end{proof}

Combining Lemmas~\ref{HaggkvistLemma} and~\ref{Haggkvist_2-cycles}, we obtain the existence of $F$-factorizations of $H_{2m}^*$ for bipartite factors.

\begin{theorem}\label{DirectedHaggkvist}
Let $m>1$ be an integer and let $F$ be a bipartite directed $2$-factor of order $2m$.  There exists an $F$-factorization of $H_{2m}^*$.
\end{theorem}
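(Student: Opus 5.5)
The plan is a case split on whether $F$ contains directed $2$-cycles, with each case handled by one of the two preceding lemmas. Write $F=[m_1,\ldots,m_t]$ with $m_1\le\cdots\le m_t$. Since $F$ is a bipartite directed $2$-factor of order $2m$, every $m_i$ is even and $m_1+\cdots+m_t=2m$.

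\emph{Case 1: $F$ has no directed $2$-cycles}, so every $m_i\ge 4$. Let $\overline{F}$ be the undirected $2$-regular graph $[m_1,\ldots,m_t]$. It is a bipartite $2$-factor of order $2m$, because each cycle has even length at least $4$ and is therefore a genuine undirected cycle. By Lemma~\ref{HaggkvistLemma}, $H_{2m}$ has an $\overline{F}$-factorization $\{G_1,\ldots,G_r\}$; counting degrees, $H_{2m}$ is $4$-regular, so $r=2$. For each $G_j$, orient every cycle in one direction to get a directed $2$-factor $G_j^+$, and in the opposite direction to get $G_j^-$. Both are isomorphic to $F$.

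Every edge $\{u,v\}$ of $H_{2m}$ lies in exactly one $G_j$. Within that $G_j$, the cycle containing the edge has length at least $4$, so $G_j^+$ uses exactly one of the arcs $uv,vu$ and $G_j^-$ uses the other. Hence $\{G_j^+,G_j^-: j=1,2\}$ partitions $A(H_{2m}^*)$ and is the required $F$-factorization. The hypothesis $m_i\ge 4$ is exactly what is needed here: for a $2$-cycle, the two orientations would coincide and the argument would break down.

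\emph{Case 2: $F$ has $s\ge 1$ directed $2$-cycles.} Here Lemma~\ref{Haggkvist_2-cycles} gives the $F$-factorization of $H_{2m}^*$ directly.

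Combining the two cases proves the theorem for every $m>1$. There is no real obstacle: all the substantive work lies in Lemma~\ref{Haggkvist_2-cycles}, whose explicit constructions are taken as given. The only points to check are the orientation argument in Case 1 and the fact that the bipartite hypothesis supplies the even cycle lengths that Lemma~\ref{HaggkvistLemma} requires.
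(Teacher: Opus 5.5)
Your proposal is correct and follows the paper's argument. For the case without $2$-cycles, the paper likewise orients each cycle of an undirected factorization from Lemma~\ref{HaggkvistLemma} in both directions, and for the case with $2$-cycles it cites Lemma~\ref{Haggkvist_2-cycles}. One small remark: your count $r=2$ is unnecessary and fails for $m=2$ if $C_2$ is read as a single edge, but nothing in your argument depends on it.
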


In light of Theorems~\ref{WH} and~\ref{DirectedHaggkvist}, we can state the following theorem.

\begin{theorem} \label{W-reduction}
 Let $m \geq 7$ be an odd integer and let $F$ be a bipartite directed $2$-factor of order $2m$.  If there exists an $F$-factorization of $W_{2m}^*$, then there exists an $F$-factorization of $K_{2m}^*$.  
\end{theorem}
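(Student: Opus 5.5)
The plan is to combine Theorem~\ref{WH} with Theorem~\ref{DirectedHaggkvist}. Since $m \geq 7$ is odd, Theorem~\ref{WH} gives a decomposition of $K_{2m}^*$ into arc-disjoint spanning subdigraphs $D_0, D_1, \ldots, D_{(m-5)/2}$. Here $D_0 \cong W_{2m}^*$ and $D_j \cong H_{2m}^*$ for $1 \leq j \leq \frac{m-5}{2}$. Each of these copies lives on the full vertex set of $K_{2m}^*$, since both $W_{2m}$ and $H_{2m}$ have $2m$ vertices.

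Next I factor each piece separately. By hypothesis there is an $F$-factorization of $W_{2m}^*$. Transport it through an isomorphism onto $D_0$; this gives an $F$-factorization of $D_0$ consisting of directed $2$-factors of $D_0$, each spanning all $2m$ vertices. For each $j \geq 1$, $F$ is a bipartite directed $2$-factor of order $2m$ and $m>1$, so Theorem~\ref{DirectedHaggkvist} yields an $F$-factorization of $H_{2m}^*$. Transporting it onto $D_j$ gives an $F$-factorization of $D_j$.

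Finally, take the union of all these factorizations. Each factor is a spanning subdigraph of $K_{2m}^*$ isomorphic to $F$. Factors coming from the same $D_j$ are arc-disjoint by definition of a factorization, and factors from different $D_j$ are arc-disjoint because the $D_j$ themselves are. Their arc sets together cover $\bigcup_j A(D_j) = A(K_{2m}^*)$, so the collection is an $F$-factorization of $K_{2m}^*$.

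As a sanity check on the count, $K_{2m}^*$ has $2m-1$ arcs out of each vertex. $W_{2m}^*$ has out-degree $9$ and $H_{2m}^*$ has out-degree $4$, and $9 + 4\cdot\frac{m-5}{2} = 2m-1$. This also gives the number of factors: $W_{2m}^*$ must be split into $9$ factors and each $H_{2m}^*$ into $4$.

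There is no real obstacle here. The only points to check are that the isomorphic copies from Theorem~\ref{WH} are spanning, so that spanning factors of the pieces are spanning in $K_{2m}^*$, and that Theorem~\ref{DirectedHaggkvist} applies to every bipartite $F$, including factors with $2$-cycles. Both hold by the statements as given.
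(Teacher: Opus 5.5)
Your proposal is correct and matches the paper's argument. The paper obtains this theorem directly from Theorem~\ref{WH}, which decomposes $K_{2m}^*$ into one copy of $W_{2m}^*$ and $\frac{m-5}{2}$ copies of $H_{2m}^*$; it then factors each $H_{2m}^*$ by Theorem~\ref{DirectedHaggkvist} and factors $W_{2m}^*$ by hypothesis, exactly as you do.
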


In conclusion, to solve $\op^*(F)$ for $F$ a bipartite directed $2$-factor, it suffices to construct an $F$-factorization of $W_{2m}^*$.

\section{Factorizations of $W_{2m}^*$} \label{S:main}

In this section, we construct $F$-factorizations of $W_{2m}^*$, where $m$ is odd and $F$ is a bipartite directed 2-factor.  To do this, we adapt and formalize a method used for the undirected case in \cite{BryantDanziger, Haggkvist}, the directed Oberwolfach problem in \cite{DanielAlice}, and the directed Hamilton-Waterloo problem in \cite{DirHamWateven}. The constructions given in this section also give rise to $F$-factorizations of $W_{2m}^*$ when $m$ is even. However, it is not known in this case whether these $F$-factorizations can be used to construct an $F$-factorization of $K^*_{2m}$.

To factor $W_{2m}^*$, it will be helpful to consider factorizations of an auxiliary digraph $J_{2m}^*$, which we now define by specifying the undirected graph $J_{2m}$.  

\begin{definition}
Let $J_{2m}$ to be the graph with vertex set $\{x_i, y_i \mid i \in \mathbb{Z}_{m+2}\}$ and edge set:
\begin{multline*}
    E(J_{2m}) = \{ \{x_i, y_i\} \mid 1 \leq i \leq m\} \cup  \{\{x_i, x_{i+1}\}, \{y_i, y_{i+1}\}, \{x_i, y_{i+1}\}, \{y_i, x_{i+1}\}, \{x_i, x_{i+2}\}, \\ \{y_i, y_{i+2}\}, \{x_i, y_{i+2}\}, \{y_i, x_{i+2}\} \mid  0 \leq i \leq m-1\}.
\end{multline*}
\end{definition}

\noindent Figure~\ref{Jgraph} illustrates $J_{14}$ as an example.  Note that $m=7$ in Figure~\ref{Jgraph}.

\begin{figure}[h]
\begin{center}
\begin{tikzpicture}[x=1.5cm,y=1.5cm,scale=1]
\foreach \x in {1,...,7}{
    \draw (\x,0) -- (\x,1);
}
\foreach \x in {0,...,6}{
    \draw (\x,0) -- (\x+1,0);
    \draw (\x,1) -- (\x+1,1);
    \draw (\x,0) -- (\x+1,1);
    \draw (\x,1) -- (\x+1,0);
    \draw (\x,0) -- (\x+2,1);
    \draw (\x,1) -- (\x+2,0);
    \draw (\x,0) .. controls (\x+1,-0.25) .. (\x+2,0);
    \draw (\x,1) .. controls (\x+1,1.25) .. (\x+2,1);
}
\foreach \x in {0,...,8}{
    \draw[ball color=black](\x,1) circle (3pt) node[above, inner sep=10pt]{$x_{\x}$};
    \draw[ball color=black](\x,0) circle (3pt) node[below, inner sep=10pt]{$y_{\x}$};
}
\end{tikzpicture}
\end{center}
\caption{The graph $J_{14}$.} \label{Jgraph}
\end{figure}
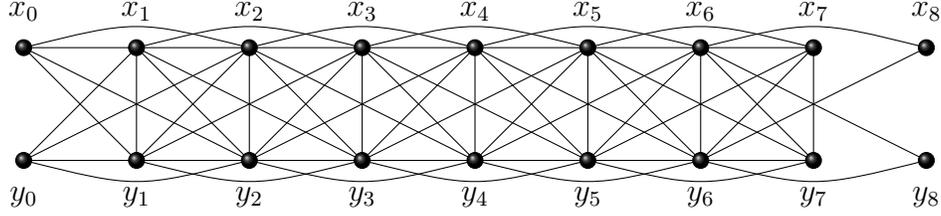

We may think of $J_{2m}^*$ as being formed by ``opening'' $W_{2m}^*$. Conversely, we can reverse this process by identifying vertices $x_i$ and $y_i$ of $J^*_{2m}$, where $i \in \{0,1\}$, with $x_{i+m}$ and $y_{i+m}$, respectively, to obtain $W^*_{2m}$.  

Note that a directed $2$-factor of $J_{2m}^*$ does not itself yield a directed $2$-factor of $W_{2m}^*$. However, a 2-regular digraph satisfying certain properties described in Definition \ref{defn:admiss} will in fact yield a $2$-factor of $W_{2m}^*$.

\begin{definition} \label{defn:admiss}
An {\em admissible} $2$-regular subdigraph of $J^*_{2m}$ is a $2$-regular subdigraph of $J^*_{2m}$ of order $2m$ that contains exactly one vertex from each of the sets $\{x_0,x_m\}$, $\{x_1,x_{m+1}\}$, $\{y_0,y_m\}$, $\{y_1,y_{m+1}\}$. An {\em admissible decomposition of $J^*_{2m}$} is a decomposition of $J^*_{2m}$ comprised of admissible 2-regular subdigraphs.
\end{definition}  

See Figures \ref{Fig:AdmiJ8} and \ref{Fig:AdmiJ6} for two examples of admissible $2$-regular subdigraphs of $J^*_{2m}$. Observe that an admissible $2$-regular subdigraph of $J_{2m}^*$ saturates all the vertices in $\{x_i,y_i \mid 2 \leq i \leq m-1\}$ and will yield a $2$-factor of $W_{2m}^*$ by identifying $x_m$ with $x_0$ and $y_m$ with $y_0$.  Furthermore, an admissible decomposition of $J^*_{2m}$ gives rise to a directed $2$-factorization of $W_{2m}^*$. We formalize this observation in Lemma \ref{JtoW} below.

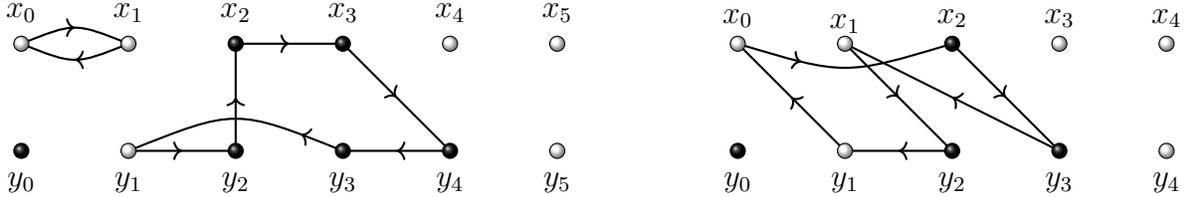
\begin{figure}[h]
\begin{subfigure}[b] {0.45 \textwidth}
\centering
\begin{tikzpicture}[x=1.5cm,y=1.5cm,scale=0.95]
\begin{scope}[thick,decoration={markings,mark=at position 0.5 with {\arrow{>}}}]
\draw[postaction=decorate] (0,1) .. controls (0.5,1.2) .. (1,1);
\draw[postaction=decorate] (1,1) .. controls (0.5,0.8) .. (0,1);
\draw[postaction=decorate] (1,0) -- (2,0);
\draw[postaction=decorate] (2,0) -- (2,1);
\draw[postaction=decorate] (2,1) -- (3,1);
\draw[postaction=decorate] (3,1) -- (4,0);
\draw[postaction=decorate] (4,0) -- (3,0);
\end{scope}
\begin{scope}[thick,decoration={markings,mark=at position 0.2 with {\arrow{>}}}]
\draw[postaction=decorate] (3,0) .. controls (2, 0.4) .. (1,0);
\end{scope}
\foreach \x in {2,3}{
    \draw[ball color=black](\x,1) circle (3pt) node[above, inner sep=8pt]{$x_{\x}$};
    }
\foreach \x in {4,5}{
    \draw[ball color=white](\x,1) circle (3pt)
    node[above, inner sep=8pt]{$x_{\x}$};
}
\foreach \x in {0,2,3}{
    \draw[ball color=black](\x,0) circle (3pt) node[below, inner sep=8pt]{$y_{\x}$};
}
\draw[ball color=white](5,0) circle (3pt) node[below, inner sep=8pt]{$y_{5}$};
\draw[ball color=white](0,1) circle (3pt)node[above, inner sep=8pt]{$x_{0}$};
\draw[ball color=white](1,1) circle (3pt)node[above, inner sep=8pt]{$x_{1}$};
\draw[ball color=white](1,0) circle (3pt)node[below, inner sep=8pt]{$y_{1}$};
\draw[ball color=black](4,0) circle (3pt)node[below, inner sep=8pt]{$y_{4}$};
\end{tikzpicture}
\caption{Admissible $2$-regular subdigraph of $J^*_8$.}
\label{Fig:AdmiJ8}
\end{subfigure}
\hfill
\begin{subfigure}[b]{0.45 \textwidth}
\centering
\begin{tikzpicture}[x=1.5cm,y=1.5cm,scale=0.95]
\begin{scope}[thick,decoration={markings,mark=at position 0.5 with {\arrow{>}}}]
\draw[postaction=decorate] (2,1) -- (3,0);
\draw[postaction=decorate] (3,0) -- (1,1);
\draw[postaction=decorate] (1,1) -- (2,0);
\draw[postaction=decorate] (2,0) -- (1,0);
\draw[postaction=decorate] (1,0) -- (0,1);
\end{scope}
\begin{scope}[thick,decoration={markings,mark=at position 0.3 with {\arrow{>}}}]
\draw[postaction=decorate] (0,1) .. controls (1,0.7) .. (2,1);
\end{scope}
\draw[ball color=black](2,1) circle (3pt)node[above, inner sep=6pt]{$x_{2}$};
\foreach \x in {3,4}{
    \draw[ball color=white](\x,1) circle (3pt) node[above, inner sep=6pt]{$x_{\x}$};
    }
\foreach \x in {0,2}{
    \draw[ball color=black](\x,0) circle (3pt) node[below, inner sep=8pt]{$y_{\x}$};
}
\draw[ball color=white](0,1) circle (3pt)node[above, inner sep=6pt]{$x_{0}$};
\draw[ball color=white](1,1) circle (3pt)node[above, inner sep=4pt]{$x_{1}$};
\draw[ball color=white](1,0) circle (3pt)node[below, inner sep=8pt]{$y_{1}$};
\draw[ball color=black](3,0) circle (3pt)node[below, inner sep=8pt]{$y_{3}$};
\draw[ball color=white](4,0) circle (3pt)node[below, inner sep=8pt]{$y_{4}$};

\end{tikzpicture}
\caption{Admissible $2$-regular subdigraph of $J^*_6$.}
\label{Fig:AdmiJ6}
\end{subfigure}
\caption{Admissible $2$-regular subdigraphs in $J^*_8$ and $J^*_6$ that saturate the same vertices in $\{x_0, x_1, y_0, y_1\}$.}
\label{Fig:PatternEx}
\end{figure}

\begin{lemma} \label{JtoW}
Let $F=[m_1,m_2,\ldots,m_t]$ be a $2$-regular digraph of order $2m$.  If there exists an admissible $F$-decomposition  of $J_{2m}^*$, then there exists an $F$-factorization of $W_{2m}^*$.
\end{lemma}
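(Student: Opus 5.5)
The plan is to construct the identification map explicitly and show that it carries arcs of $J_{2m}^*$ bijectively onto arcs of $W_{2m}^*$. Then each admissible $2$-regular subdigraph is sent to a spanning copy of $F$. Throughout I assume $m\ge 5$, which covers the range $m\ge 7$ used in this paper. This assumption ensures that $\pm1$ and $\pm2$ are four distinct nonzero residues modulo $m$, so $\mathrm{Circ}(m,\{\pm1,\pm2\})$ has no loops or repeated edges.

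Define $\varphi: V(J_{2m}^*)\to V(W_{2m}^*)$ by $\varphi(x_i)=x_{i \bmod m}$ and $\varphi(y_i)=y_{i \bmod m}$ for $0\le i\le m+1$. The only vertices that are identified are $x_m\mapsto x_0$, $x_{m+1}\mapsto x_1$, $y_m\mapsto y_0$ and $y_{m+1}\mapsto y_1$. Extend $\varphi$ to arcs by $\varphi(uv)=\varphi(u)\varphi(v)$.

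\emph{Step 1: $\varphi$ is a bijection $A(J_{2m}^*)\to A(W_{2m}^*)$.} It suffices to check this on undirected edges, since each edge yields its two opposite arcs.
\begin{itemize}[label={}, leftmargin=1em]
\item The rungs $\{x_i,y_i\}$ with $1\le i\le m$ map to the rungs $\{x_j,y_j\}$ with $j\in\mathbb{Z}_m$. Index $m$ goes to $0$, so each rung is hit exactly once.
\item For each $d\in\{1,2\}$ and $0\le i\le m-1$, the four edges $\{x_i,x_{i+d}\}$, $\{y_i,y_{i+d}\}$, $\{x_i,y_{i+d}\}$, $\{y_i,x_{i+d}\}$ map to the four edges of $W_{2m}$ joining the blow-ups of $i$ and $i+d \pmod m$.
\item As $i$ runs over $\mathbb{Z}_m$, the pairs $\{i,i+d\}$ run over each edge of difference $d$ in the circulant exactly once. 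Here we use $m\ge5$, so that $i+d\ne i-d$ and differences $1$ and $2$ never coincide.
\end{itemize}
Hence $\varphi$ is surjective on edges. Both graphs have $9m$ edges, so $\varphi$ is a bijection. It sends arcs to arcs and never creates a loop, because no edge of $J_{2m}$ joins $x_i$ or $y_i$ to $x_{i+m}$ or $y_{i+m}$.

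\emph{Step 2: images of admissible subdigraphs.} Let $D$ be an admissible $2$-regular subdigraph of $J_{2m}^*$ isomorphic to $F$. Since $D$ has order $2m$ and contains exactly one vertex from each of the four pairs $\{x_0,x_m\},\{x_1,x_{m+1}\},\{y_0,y_m\},\{y_1,y_{m+1}\}$, it must contain all $2(m-2)$ vertices $x_i,y_i$ with $2\le i\le m-1$. Therefore $\varphi$ restricted to $V(D)$ is injective with image all of $V(W_{2m}^*)$. By Step 1, $\varphi$ maps the arcs of $D$ injectively to arcs of $W_{2m}^*$. Being injective on vertices, it sends each directed cycle of $D$ to a directed cycle of the same length, and the images of distinct cycles are vertex-disjoint. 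So $\varphi(D)$ is a directed $2$-factor of $W_{2m}^*$ isomorphic to $F$.

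\emph{Step 3: assembling the factorization.} Given an admissible $F$-decomposition $\{D_1,\dots,D_r\}$ of $J_{2m}^*$, the $D_j$ partition $A(J_{2m}^*)$. By the bijectivity in Step 1, the images $\varphi(D_j)$ are pairwise arc-disjoint and their arc sets partition $A(W_{2m}^*)$. Hence $\{\varphi(D_1),\dots,\varphi(D_r)\}$ is an $F$-factorization of $W_{2m}^*$. The only point needing care is Step 1, specifically the role of $m\ge5$ in ruling out coincidences among differences; the remaining steps are routine.
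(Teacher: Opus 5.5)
Your proposal is correct and follows the paper's proof: it uses the same identification map $x_{m+i}\mapsto x_i$, $y_{m+i}\mapsto y_i$ for $i\in\{0,1\}$, uses admissibility to get a spanning copy of $F$, and carries the decomposition over via the one-to-one correspondence between the arcs of $J_{2m}^*$ and $W_{2m}^*$. Your explicit check of that correspondence (the $9m$ edge count under the hypothesis $m\ge 5$) fills in what the paper calls straightforward, and the hypothesis is genuinely needed, since for $m=4$ the differences $\pm 2$ coincide modulo $4$ and the correspondence fails.
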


\begin{proof}
Let $\phi: V(J^*_{2m}) \rightarrow V(W^*_{2m})$, with $\phi(x_i)=x_j$ and $\phi(y_i)=y_j$, where $j \equiv i \ \pmod{m}$. That is, for $i \in \{0,1\}$, $\phi$ identifies the vertices of $J^*_{2m}$, $x_i$ and $y_i$, with $x_{i+m}$ and $y_{i+m}$, but leaves the rest of the vertices unchanged.

Let $\{F_1, F_2, \ldots, F_9\}$ be an admissible $F$-decomposition of $J_{2m}^*$. Given $D \in \{F_1, \ldots, F_9\}$, we construct a $2$-regular digraph $D'=\phi(D)$ in $W_{2m}^*$ by applying $\phi$ to the vertices of the directed cycles of $D$. It is straightforward to see that arcs of $D'$ are indeed arcs of $W^*_{2m}$.  Since $D$ has order $2m$ and contains only one vertex in each of the sets $\{x_0,x_m\}$, $\{x_1,x_{m+1}\}$, $\{y_0,y_m\}$ and $\{y_1,y_{m+1}\}$, this implies that $D'$ is a directed $2$-factor of $W_{2m}^*$.
    
Now let $F_i$ and $F_j$ be distinct elements of $\{F_1, \ldots, F_9\}$ and let $F_i'$ and $F_j'$ be the directed $2$-factors of $W_{2m}^*$ obtained by applying $\phi$ as described above.  Since the arcs of $W_{2m}^*$ are in one-to-one correspondence with the arcs of $J_{2m}^*$, the fact that $F_i$ and $F_j$ are arc-disjoint implies that $F_i'$ and $F_j'$ are also arc-disjoint.  This correspondence also shows that each arc of $W_{2m}^*$ appears in exactly one digraph in $\{F'_1, \ldots, F'_9\}$. Thus, we obtain an $F$-factorization of $W^*_{2m}$. \end{proof}

\begin{definition}
Let $F$ be an admissible $2$-regular subdigraph of $J^*_{2m}$.  The {\em external  pattern} of $F$ is the set $\{x_0, x_1, y_0, y_1\}\cap V(F)$. If $\mathcal{F}=\{F_1,  \ldots, F_9\}$ is an admissible decomposition of $J^*_{2m}$ into $2$-regular subdigraphs, and $X_i$ is the external pattern of $F_i$, then the list $(X_1, \ldots, X_9)$ is the {\em external pattern of $\mathcal{F}$.}  
\end{definition}

Observe that both $2$-regular digraphs in Figure~\ref{Fig:PatternEx} have external pattern $\{x_0, x_1, y_1\}$.  Of particular interest are pairs of $2$-regular admissible subdigraphs with the same external patterns. These pairs of subdigraphs are defined below. 

\begin{definition}
Let $F=[m_1,\ldots,m_s]$ and $F'=[m_1',\ldots,m_t']$ be $2$-regular admissible subdigraphs of $J^*_{2m}$ and $J^*_{2m'}$, with external patterns $X$ and $X'$, respectively.  If $X=X'$, then $F$ and $F'$ are {\em compatible}. Decompositions $\mathcal{F} =\{F_1, \ldots, F_9\}$ of $J^*_{2m}$ and $\mathcal{F}'=\{F_1', \ldots, F'_9\}'$ of $J^*_{2m'}$ into admissible $2$-regular digraphs are said to be {\em compatible} if, for each $i \in \{1, \ldots, 9\}$, $F_i$ and $F_i'$ are compatible.  
\end{definition}

Our next objective is to show that pairs of compatible decompositions can be spliced together to form a compatible decomposition of bigger order. To that end, we will find it useful to shift subgraphs of $J_{2m}^*$ by using the following function.

\begin{definition}
Let $\sigma$ be the function on $\{x_i,y_i \mid i \in \mathbb{Z}\}$ defined by $\sigma(x_i)=x_{i+1}$ and $\sigma(y_i)=y_{i+1}$. For a digraph $D$ on vertex set $\{x_i,y_i \mid 0 \leq i \leq m\}$, $\sigma(D)$ is the digraph obtained by applying $\sigma$ to each of its vertices (and correspondingly the arcs).  
\end{definition}

Note that $\sigma(D)$ is isomorphic to $D$, but $\sigma$ is not an automorphism; rather, we think of $\sigma(D)$ as a shift of $D$. Thus, for example, if $D$ is the $6$-cycle $(x_0, x_2, y_3, x_1, y_2, y_1)$ illustrated in Figure~\ref{Fig:AdmiJ6}, then $\sigma(D)$ is the $6$-cycle $(x_1, x_3, y_4, x_2, y_3, y_2)$.

In Lemma \ref{SpliceDecomps}, we show how the function $\sigma$ is used to construct an admissible decomposition of $J^*_{2m}$ from a pair of compatible admissible decompositions of $J_{2\ell_1}^*$ and $J_{2\ell_2}^*$, where $m=\ell_1+\ell_2$.  

\begin{lemma} \label{SpliceDecomps}
Let $m=\ell_1+\ell_2$, and let $\mathcal{F}_1$ be an admissible $[m_1, \ldots, m_s]$-decomposition of $J^*_{2\ell_1}$ and $\mathcal{F}_2$ be an admissible $[r_1,\ldots, r_t]$-decomposition of $J^*_{2\ell_2}$.  If $\mathcal{F}_1$ and $\mathcal{F}_2$ are compatible with external pattern $(X_1,\ldots, X_9)$, then there exists an admissible $[m_1,\ldots,m_s,r_1,\ldots,r_t]$-decomposition of $J^*_{2m}$ with external pattern $(X_1,\ldots, X_9)$.
\end{lemma}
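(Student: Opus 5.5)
The plan is to build the decomposition of $J^*_{2m}$ by placing $\mathcal{F}_1$ on the left and a shifted copy $\sigma^{\ell_1}(\mathcal{F}_2)$ on the right, then merging factors index by index. Write $\mathcal{F}_1=\{F_1,\ldots,F_9\}$ and $\mathcal{F}_2=\{F'_1,\ldots,F'_9\}$. For each $i$, define
\[
G_i = F_i' ' \oplus \sigma^{\ell_1}(F_i'),
\]
where $F_i''$ is obtained from $F_i$ by a small relabelling. Specifically, $F_i''$ leaves $F_i$ unchanged on $\{x_j,y_j\mid 0\le j\le \ell_1+1\}$, but reads the vertices $x_{\ell_1},x_{\ell_1+1},y_{\ell_1},y_{\ell_1+1}$ as the ``glue'' vertices. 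The key observation is that the vertex sets $\{x_{\ell_1},x_{\ell_1+1},y_{\ell_1},y_{\ell_1+1}\}$ play two roles. In $J^*_{2\ell_1}$ they are the right-end vertices $x_{\ell_1},x_{\ell_1+1},\ldots$. In $\sigma^{\ell_1}(J^*_{2\ell_2})$ they are the shifted left-end vertices $\sigma^{\ell_1}(x_0),\sigma^{\ell_1}(x_1),\ldots$.

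First I check the arc sets. The arcs of $J^*_{2\ell_1}$ are exactly those of $J^*_{2m}$ with at least one endpoint of index $<\ell_1$, together with the rungs $x_jy_j$ for $1\le j\le \ell_1$. The arcs of $\sigma^{\ell_1}(J^*_{2\ell_2})$ are those with smaller index in $[\ell_1,m-1]$, together with the rungs for $\ell_1+1\le j\le m$. Hence $J^*_{2m}$ is the arc-disjoint union of the two. The only subtle point is the rung at index $\ell_1$, since $x_0y_0$ is absent from $J_{2\ell_2}$; this makes the two pieces partition $A(J^*_{2m})$ exactly.

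Second I check the vertex condition. By admissibility, $F_i$ uses exactly one vertex from each of the pairs $\{x_0,x_{\ell_1}\}$, $\{x_1,x_{\ell_1+1}\}$, and similarly for $y$. Consequently, $F_i$ meets $\{x_{\ell_1},x_{\ell_1+1},y_{\ell_1},y_{\ell_1+1}\}$ in exactly the complement of $X_i$ (after shifting indices by $\ell_1$). On the other side, $\sigma^{\ell_1}(F_i')$ meets this set in exactly $\sigma^{\ell_1}(X_i)$, because $X_i$ is the external pattern of $F'_i$ as well. So these two pieces are vertex-disjoint on the overlap, and together they cover each glue vertex exactly once.

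Because of this, $G_i=F_i\oplus\sigma^{\ell_1}(F'_i)$ is a vertex-disjoint union of directed cycles. I then check the remaining properties one by one:
\begin{itemize}
\item \emph{Cycle type.} $G_i$ has cycle type $[m_1,\ldots,m_s,r_1,\ldots,r_t]$.
\item \emph{Order.} $G_i$ has order $2\ell_1+2\ell_2=2m$.
\item \emph{Left end.} On the left end, $G_i$ meets $\{x_0,x_1,y_0,y_1\}$ in $X_i$, coming from $F_i$.
\item \emph{Right end.} On the right end $\{x_m,x_{m+1},y_m,y_{m+1}\}$, $G_i$ meets the complement of $\sigma^m(X_i)$, coming from $\sigma^{\ell_1}(F'_i)$ and the admissibility of $F'_i$.
\end{itemize}
Thus $G_i$ is admissible with external pattern $X_i$. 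Arc-disjointness of the $G_i$ and coverage of all arcs follow from the arc partition established in the first step.

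The main obstacle is the bookkeeping at the glue region. One must confirm that no vertex with index in $\{\ell_1,\ell_1+1\}$ is used by both halves. One must also confirm that no arc is double counted or missed there, notably the rung at $\ell_1$ and the arcs between indices $\ell_1$ and $\ell_1+1$. Both facts come down to compatibility and the precise edge definition of $J_{2m}$. So I would write out the arc partition explicitly by the smaller index of each arc before concluding.
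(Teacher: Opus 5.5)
Your proposal is correct and follows the paper's proof: you take $F_i\oplus\sigma^{\ell_1}(F_i')$ and use compatibility to show that $F_i$ and $\sigma^{\ell_1}(F_i')$ meet complementary subsets of $\{x_{\ell_1},x_{\ell_1+1},y_{\ell_1},y_{\ell_1+1}\}$, and your explicit partition of $A(J^*_{2m})$ by smaller index (the rung at $\ell_1$ goes to the left piece, the rung at $\ell_1+1$ to the right) fills in a step the paper leaves implicit. The ``relabelling'' $F_i''$ is vacuous. Since $F_i$ already lives on indices $0,\ldots,\ell_1+1$, $F_i''$ is just $F_i$, so you can drop it.
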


\begin{proof}
Let $\mathcal{F}_1=\{F_1, \ldots, F_9\}$ and $\mathcal{F}_2=\{F'_1, \ldots, F'_9\}$ be two compatible decompositions of $J_{2\ell_1}^*$ and $J_{2\ell_2}^*$, respectively. Thus, $F_j$ and $F_j'$ are compatible for each $j\in \{1,\ldots, 9\}$. This means that, for $i \in \{0,1\}$, $x_{\ell_1+i}$ (resp.\ $y_{\ell_1+i}$) is a vertex of $F_j$ if and only if $x_i$ (resp.\ $y_i$) is not a vertex of $F_j'$.  Hence $F_j \oplus \sigma^{\ell_1}(F_j')$ is an admissible $2$-regular subdigraph of $J_{2(\ell_1+\ell_2)}^*$ with the same external pattern as $F_j$ (and hence $F'_j$). 
Therefore, $\mathcal{F} = \{F_j \oplus \sigma^{\ell_1}(F_j')\mid 1\leq j\leq 9\}$ is an admissible decomposition of $J^*_{2m}$ with external pattern $(X_1,\ldots, X_9)$.
\end{proof}

We illustrate an application of Lemma \ref{SpliceDecomps} in the following example. 

\begin{example} \label{ex:comp}
The admissible $2$-regular subdigraphs $F=[2,6]$ of $J_{8}^*$ and $F'=[6]$ of $J_{6}^*$ in Figures~\ref{Fig:AdmiJ8} and~\ref{Fig:AdmiJ6} are compatible. Therefore, by the construction in the proof of Lemma \ref{SpliceDecomps}, $F \oplus \sigma^4(F')$ is an admissible $2$-regular subdigraph $F''=[2,6,6]$ of $J^*_{14}$ with external pattern $X = \{x_0,x_1,y_1\}$.  This digraph is  illustrated in Figure~\ref{Fig:CompatibleEx}.  Note that $F' \oplus \sigma^3(F)$ is also an admissible $2$-regular subdigraph with the same external pattern~$X$. This concludes the example. 
\end{example}

\begin{figure}[h!]
\centering
\begin{tikzpicture}[x=1.5cm,y=1.5cm,scale=0.8]

\begin{scope}[thick,decoration={markings,mark=at position 0.5 with {\arrow{>}}}]
\draw[postaction=decorate] (0,1) .. controls (0.5,1.2) .. (1,1);
\draw[postaction=decorate] (1,1) .. controls (0.5,0.8) .. (0,1);
\draw[postaction=decorate] (1,0) -- (2,0);
\draw[postaction=decorate] (2,0) -- (2,1);
\draw[postaction=decorate] (2,1) -- (3,1);
\draw[postaction=decorate] (3,1) -- (4,0);
\draw[postaction=decorate] (4,0) -- (3,0);
\end{scope}
\begin{scope}[thick,decoration={markings,mark=at position 0.2 with {\arrow{>}}}]
\draw[postaction=decorate] (3,0) .. controls (2, 0.35) .. (1,0);
\end{scope}
\foreach \x in {2,...,5}{
    \draw[ball color=black](\x,1) circle (3pt) node[above, inner sep=8pt]{$x_{\x}$};
    }
\foreach \x in {0,2,3,5}{
    \draw[ball color=black](\x,0) circle (3pt) node[below, inner sep=8pt]{$y_{\x}$};
}
\draw[ball color=white](0,1) circle (3pt)node[above, inner sep=8pt]{$x_{0}$};
\draw[ball color=white](1,1) circle (3pt)node[above, inner sep=8pt]{$x_{1}$};
\draw[ball color=white](1,0) circle (3pt)node[below, inner sep=8pt]{$y_{1}$};
\draw[ball color=black](4,0) circle (3pt)node[below, inner sep=8pt]{$y_{4}$};

\begin{scope}[shift={(4,0)}]
\begin{scope}[thick,decoration={markings,mark=at position 0.5 with {\arrow{>}}}]
\draw[postaction=decorate] (2,1) -- (3,0);
\draw[postaction=decorate] (3,0) -- (1,1);
\draw[postaction=decorate] (1,1) -- (2,0);
\draw[postaction=decorate] (2,0) -- (1,0);
\draw[postaction=decorate] (1,0) -- (0,1);
\end{scope}
\begin{scope}[thick,decoration={markings,mark=at position 0.2 with {\arrow{>}}}]
\draw[postaction=decorate] (0,1) .. controls (1,0.8) .. (2,1);
\end{scope}
\draw[ball color=black] (2,1) circle (3pt) node[above, inner sep=6pt]{$x_6$};
\foreach \x in {7,8}{
    \draw[ball color=white](\x-4,1) circle (3pt) node[above, inner sep=6pt]{$x_{\x}$};
    }
\foreach \x in {6}{
    \draw[ball color=black](\x-4,0) circle (3pt) node[below, inner sep=8pt]{$y_{\x}$};
}
\draw[ball color=white](4,0) circle (3pt) node[below, inner sep=8pt]{$y_8$};
\draw[ball color=white](0,1) circle (3pt);
\draw[ball color=white](1,1) circle (3pt);
\draw[ball color=white](1,0) circle (3pt);
\draw[ball color=black](3,0) circle (3pt)node[below, inner sep=8pt]{$y_{7}$};
\end{scope}
\end{tikzpicture}
\caption{An admissible $[2,6,6]$-digraph of $J^*_{14}$ formed from the pair of admissible decompositions given in Figures~\ref{Fig:AdmiJ8} and~\ref{Fig:AdmiJ6} as described in Example \ref{ex:comp}.}
\label{Fig:CompatibleEx}
\end{figure}
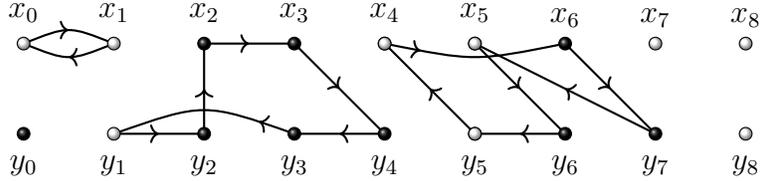

We will construct admissible $[m_1, \ldots, m_s]$-decompositions of $J_{2m}^*$ into $2$-regular digraphs with a small number of components, and then use  Lemma~\ref{SpliceDecomps} to splice together these decompositions. Our decompositions will all have the same external pattern $\mathcal{X}$, given below, and will thus be compatible:

\begin{multline*}
\mathcal{X}=(
\{y_1\}, \{x_0,x_1,y_1\}, \{x_1,y_0,y_1\}, \{x_0,x_1,y_0,y_1\}, \{y_1\}, \{x_0,x_1,y_0\}, \{x_1,y_1\}, \\ \{x_1\}, \{x_0,x_1,y_0,y_1\}
).
\end{multline*}

The $s$-tuples $[m_1, \ldots, m_s]$ are chosen to ensure that we can obtain all possible lists of even positive integers by combining them. 
This approach  greatly reduces the list of factorizations of $J_{2m}^*$ that we explicitly need to construct. See Appendix~\ref{[4,8]} for an example of an admissible decomposition of $J'_{8}$ with external pattern $\mathcal{X}$.

Before proceeding to our main constructions, we start with an example to illustrate the idea of how to construct a directed 2-factor of  $J_{2m}^*$ comprised of one long cycle of length $d+8k$, where $k \in \mathds{Z}$ and $d\in \{0, 2,4,6\}$, and several smaller cycles of fixed length.  The main goal is to reduce the construction of this 2-factor to the construction of four directed paths of small lengths satisfying particular properties and a (possibly empty) set of at most three cycles of lengths two or four.

\begin{example} \label{ex:capsplice}

Suppose we wish to build a $2$-regular digraph $D=[2m]$, where $2m \equiv 2 \pmod{8}$ and $2m \geq 10$, with external  pattern $\{x_0, x_1, y_1\}$.  Consider the digraphs $L$ and $R$ illustrated in Figures~\ref{fig:dipathL} and~\ref{fig:dipathR} .

\begin{figure}[ht]
\begin{subfigure} {0.45 \textwidth}
\centering
\begin{tikzpicture}[x=1.5cm,y=1.5cm,scale=0.75]
\foreach \x in {0,...,3} {
\coordinate (x\x) at (\x,1) {};
\coordinate (y\x) at (\x,0) {};
}
\begin{scope}[thick, decoration={markings, mark=at position 0.33 with {\arrow{>}}}]
\draw[postaction={decorate}] (y2) -- (x0);
\draw[postaction={decorate}] (x0) -- (y1);
\draw[postaction={decorate}] (y1) -- (x1);
\end{scope}
\begin{scope}[thick, decoration={markings, mark=at position 0.5 with {\arrow{>}}}]
\draw[postaction={decorate}] (x1) .. controls (2,0.7)  .. (x3);
\end{scope}
\foreach \x in {0,2,3}{
	\draw[ball color=black] (y\x) circle (3pt);
	\node[below=4pt] () at (y\x) {$y_{\x}$};
}
\draw[ball color=white] (y1) circle (3pt);
	\node[below=4pt] () at (y1) {$y_{1}$};
    
\foreach \x in {2,3}{
	\draw[ball color=black] (x\x) circle (3pt);
	\node[above=4pt] () at (x\x) {$x_{\x}$};
}
\foreach \x in {0,1}{
	\draw[ball color=white] (x\x) circle (3pt);
	\node[above=4pt] () at (x\x) {$x_{\x}$};
}
\draw[ball color=green] (x3) circle (3pt);
\draw[ball color=red] (y2) circle (3pt);
\end{tikzpicture}
\caption{Directed path of $J^*_4$, $L$.}
\label{fig:dipathL}
\end{subfigure}
\hfill
\begin{subfigure} {0.45 \textwidth}
\centering
\begin{tikzpicture}[x=1.5cm,y=1.5cm,scale=0.75]
    \begin{scope}
\foreach \x in {0,...,4} {
\coordinate (x\x) at (\x,1) {};
\coordinate (y\x) at (\x,0) {};
}

\begin{scope}[thick, decoration={markings, mark=at position 0.33 with {\arrow{>}}}]
\draw[postaction={decorate}] (y3) .. controls (2,0.4) .. (y1);
\end{scope}
\begin{scope}[thick, decoration={markings, mark=at position 0.33 with {\arrow{>}}}]
\draw[postaction={decorate}] (x0) .. controls (1,0.7) .. (x2);
\end{scope}
\begin{scope}[thick, decoration={markings, mark=at position 0.5 with {\arrow{>}}}]
\draw[postaction={decorate}] (x2) -- (y0);
\draw[postaction={decorate}] (x1) -- (y2);
\draw[postaction={decorate}] (y2) -- (y3);
\draw[postaction={decorate}] (y1) -- (x0);
\end{scope}

\foreach \x in {0,...,3}{
	\draw[ball color=black] (y\x) circle (3pt);
	\node[below=4pt] () at (y\x) {$y_{\x}$};
}
\foreach \x in {0,1,2}{
	\draw[ball color=black] (x\x) circle (3pt);
        \node[above=4pt] () at (x\x) {$x_{\x}$};
}
\draw[ball color=green] (x1) circle (3pt);
\draw[ball color=red] (y0) circle (3pt);

\foreach \x in {3,4}{
\draw[ball color=white] (x\x) circle (3pt);
 \node[above=4pt] () at (x\x) {$x_{\x}$};
}
\draw[ball color=white] (y4) circle (3pt);
\node[below=4pt] () at (y4) {$y_{4}$};
\end{scope}
\end{tikzpicture}
\caption{Directed path of $J^*_6$, $R$.}
\label{fig:dipathR}
\end{subfigure}
\\

\begin{subfigure}{1 \textwidth}
\centering
\begin{tikzpicture}[x=1.5cm,y=1.5cm,scale=0.75]
\foreach \x in {0,...,5} {
\coordinate (x\x) at (\x,1) {};
\coordinate (y\x) at (\x,0) {};
}
\begin{scope}[thick, decoration={markings, mark=at position 0.33 with {\arrow{>}}}]
\draw[postaction={decorate}, gray] (x1) -- (x0);
\draw[postaction={decorate},  gray] (x0) .. controls (1,0.7) .. (x2);
\draw[postaction={decorate},  gray] (x2) -- (x3);
\draw[postaction={decorate},  gray] (x3) .. controls (4,0.7) .. (x5);

\draw[postaction={decorate}] (y4) -- (y3);
\draw[postaction={decorate}] (y3) .. controls (2,0.4) .. (y1);
\draw[postaction={decorate}] (y1) -- (y2);
\end{scope}
\begin{scope}[thick, decoration={markings, mark=at position 0.67 with {\arrow{>}}}]
\draw[postaction={decorate}] (y2) .. controls (1,0.4) .. (y0);
\end{scope}
\foreach \x in {0,...,5}{
	\draw[ball color=black] (y\x) circle (3pt);
}
\foreach \x in {2,3}{
	\node[below=4pt] () at (y\x) {$y_{\x}$};
}

\foreach \x in {0,...,5}{
	\draw[ball color=black] (x\x) circle (3pt);
}

\foreach \x in {2,3}{
	\node[above=4pt] () at (x\x) {$x_{\x}$};
}
\draw (0,1) node[above=4pt] {$x_0$};
\draw (1,1) node[above=4pt] {$x_1$};
\draw (4,1) node[above=4pt] {$x_4$};
\draw (5,1) node[above=4pt] {$x_5$};
\draw (0,0) node[below=4pt] {$y_0$};
\draw (1,0) node[below=4pt] {$y_1$};
\draw (4,0) node[below=4pt] {$y_4$};
\draw (5,0) node[below=4pt] {$y_5$};
\draw[ball color=green] (x1) circle (3pt);
\draw[ball color=green] (x5) circle (3pt);
\draw[ball color=red] (y0) circle (3pt);
\draw[ball color=red] (y4) circle (3pt);
\end{tikzpicture}
\caption{Two directed paths, $Q$ (grey) and $U$ (black), such that $C=(Q, U)$. }
\label{fig:dipathsC}
\end{subfigure}
\caption{Four directed paths used to construct an admissible $[2m]$-factor of $J^*_{2m}$, as described in Example \ref{ex:capsplice}.}
\label{LeftRight}
\end{figure}
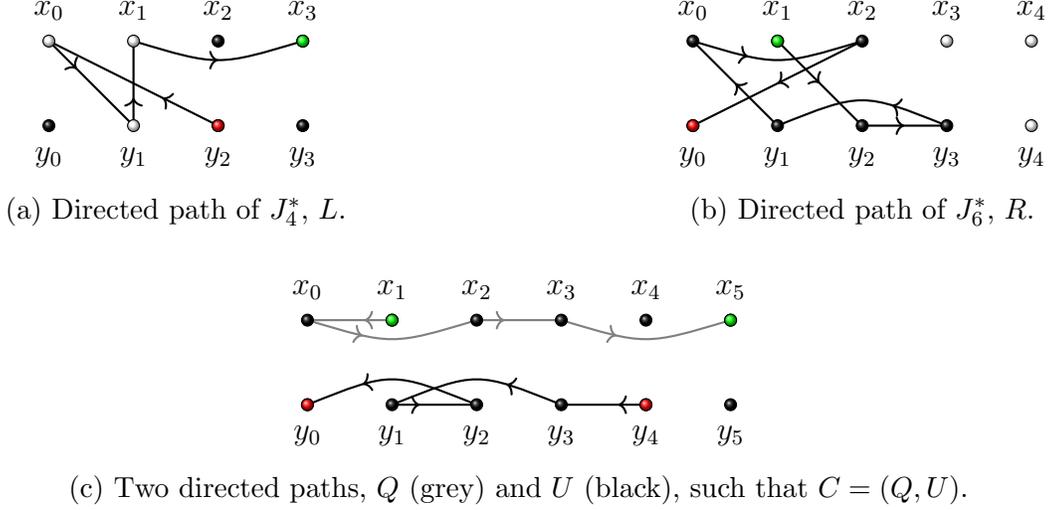

Observe that $L$ and $R$ are directed paths in $J_4^*$ and $J_6^*$.  Their lengths sum to $10 \equiv 2 \pmod{8}$. Moreover, $L$ saturates vertices $x_0$, $x_1$ and $y_1$, but not $y_0$, while $R$ saturates $y_3$ but not $x_3,x_4$ or $y_4$.  Lastly,  $L$ and $\sigma^2(R)$ may be concatenated to form a directed cycle of length $10$ of $J^*_{10}$, $L+\sigma^2(R)$. 

To form a cycle of length $2m=2(5+4k)=10+8k$ of $J^*_{2m}$, we place $k$ translations of the digraph $C=(Q,U)$, where $Q$ and $U$ are directed paths of $J_8$ illustrated in Figure \ref{fig:dipathsC}, between $L$ and $R$. It is not difficult to see that the concatenation 

\[
L+\sigma^2(Q)+\sigma^{6}(Q)+
\cdots+\sigma^{4(k-1)+2}(Q) + \sigma^{4k+2}(R) +\sigma^{4(k-1)+2}(U)+\sigma^{4(k-2)+2}(U)+\cdots+\sigma^{2}(U)
\]

\noindent is a directed cycle of length $2m$ of $J^*_{2m}$. Lastly, observe that this directed cycle is also an admissible 2-regular subdigraph of $J^*_{2m}$. This concludes the example. 
\end{example}

We will now generalize the construction given in Example \ref{ex:capsplice}. First, Definitions \ref{capdef} and \ref{continuedef} formalize the structures required for our construction. We require nine factors, all respecting the fixed external pattern $\cal X$. The structures in Definitions~\ref{capdef} and~\ref{continuedef} allow us to decompose $J_{2m}^*$ into `left', `centre' and `right' pieces  whose union is $J_{2m}^*$. The left piece will be isomorphic to the digraph obtained from $J^*_{2\ell}$ by removing the arc $x_{\ell}y_{\ell}$, while the right piece will be isomorphic to the digraph obtained from $J^*_{2\ell}$ by adding the arc $x_{0}y_{0}$. Lastly, the centre piece will be obtained from $J^*_{2\ell}$ by adding the arc $x_0y_0$ and removing the arc $x_{\ell}y_{\ell}$. See Figure~\ref{Fig:LeftMiddleRight} for an illustration of these three digraphs.

\begin{figure}[htpb!]
\begin{center}
\begin{subfigure} {0.45\textwidth}
\centering
\begin{tikzpicture}[x=1.5cm,y=1.5cm,scale=0.8]
\draw(1,0) -- (1,1);
\begin{scope}[thick, decoration={markings, mark=at position 0.33 with {\arrow{>}}}]
\draw[postaction={decorate}](2,0) -- (2,1);
\end{scope}
\foreach \x in {0,1}{
    \draw[thick] (\x,0) -- (\x+1,0);
    \draw[thick] (\x,1) -- (\x+1,1);
    \draw[thick] (\x,0) -- (\x+1,1);
    \draw[thick] (\x,1) -- (\x+1,0);
    \draw[thick] (\x,0) -- (\x+2,1);
    \draw[thick] (\x,1) -- (\x+2,0);
    \draw[thick] (\x,0) .. controls (\x+1,-0.25) .. (\x+2,0);
    \draw[thick] (\x,1) .. controls (\x+1,1.25) .. (\x+2,1);
}
\foreach \x in {0,...,3}{
    \draw[ball color=black](\x,1) circle (3pt) node[above, inner sep=10pt]{$x_{\x}$};
    \draw[ball color=black](\x,0) circle (3pt) node[below, inner sep=10pt]{$y_{\x}$};
}
\end{tikzpicture}
\caption{The left piece $\mathcal{L}$.}
\end{subfigure}
\begin{subfigure} {0.45\textwidth}
\centering
    \begin{tikzpicture} [x=1.5cm,y=1.5cm,scale=0.8]
\draw(1,0) -- (1,1);
\draw(2,0) -- (2,1);
\draw(3,0) -- (3,1);
\begin{scope}[thick, decoration={markings, mark=at position 0.5 with {\arrow{>}}}]
\draw[postaction={decorate}](0,1) -- (0,0);
\end{scope}
\foreach \x in {0,...,2}{
    \draw[thick] (\x,0) -- (\x+1,0);
    \draw[thick] (\x,1) -- (\x+1,1);
    \draw[thick] (\x,0) -- (\x+1,1);
    \draw[thick] (\x,1) -- (\x+1,0);
    \draw[thick] (\x,0) -- (\x+2,1);
    \draw[thick] (\x,1) -- (\x+2,0);
    \draw[thick] (\x,0) .. controls (\x+1,-0.25) .. (\x+2,0);
    \draw[thick] (\x,1) .. controls (\x+1,1.25) .. (\x+2,1);
}
\foreach \x in {0,...,4}{
    \draw[ball color=black](\x,1) circle (3pt) node[above, inner sep=10pt]{$x_{\x}$};
    \draw[ball color=black](\x,0) circle (3pt) node[below, inner sep=10pt]{$y_{\x}$};
}
\end{tikzpicture}
\caption{The right piece $\mathcal{R}$.}
\end{subfigure}
\\

\begin{subfigure} {1\textwidth}
\centering
\begin{tikzpicture}[x=1.5cm,y=1.5cm,scale=0.8]
\begin{scope}[shift={(4,0)}]
\foreach \x in {1,2,3}{
    \draw(\x,0) -- (\x,1);
}
\begin{scope}[thick, decoration={markings, mark=at position 0.33 with {\arrow{>}}}]
\draw[postaction={decorate}](4,0) -- (4,1);
\end{scope}
\begin{scope}[thick, decoration={markings, mark=at position 0.5 with {\arrow{>}}}]
\draw[postaction={decorate}](0,1) -- (0,0);
\end{scope}
\foreach \x in {0,...,3}{
    \draw[thick] (\x,0) -- (\x+1,0);
    \draw[thick] (\x,1) -- (\x+1,1);
    \draw[thick] (\x,0) -- (\x+1,1);
    \draw[thick] (\x,1) -- (\x+1,0);
    \draw[thick] (\x,0) -- (\x+2,1);
    \draw[thick] (\x,1) -- (\x+2,0);
    \draw[thick] (\x,0) .. controls (\x+1,-0.25) .. (\x+2,0);
    \draw[thick] (\x,1) .. controls (\x+1,1.25) .. (\x+2,1);
}
\foreach \x in {0,...,5}{
    \draw[ball color=black](\x,1) circle (3pt) node[above, inner sep=10pt]{$x_{\x}$};
    \draw[ball color=black](\x,0) circle (3pt) node[below, inner sep=10pt]{$y_{\x}$};
}
\end{scope}
\end{tikzpicture}
\caption{The centre piece $\mathcal{C}$.}   
\end{subfigure}
\end{center}
\caption{In this example, $\mathcal{L} \cup \sigma^2(\mathcal{C}) \cup \sigma^6(\mathcal{R}) = J_{18}^*$. Edges without arrows represent arcs in both directions.}\label{Fig:LeftMiddleRight}
\end{figure}
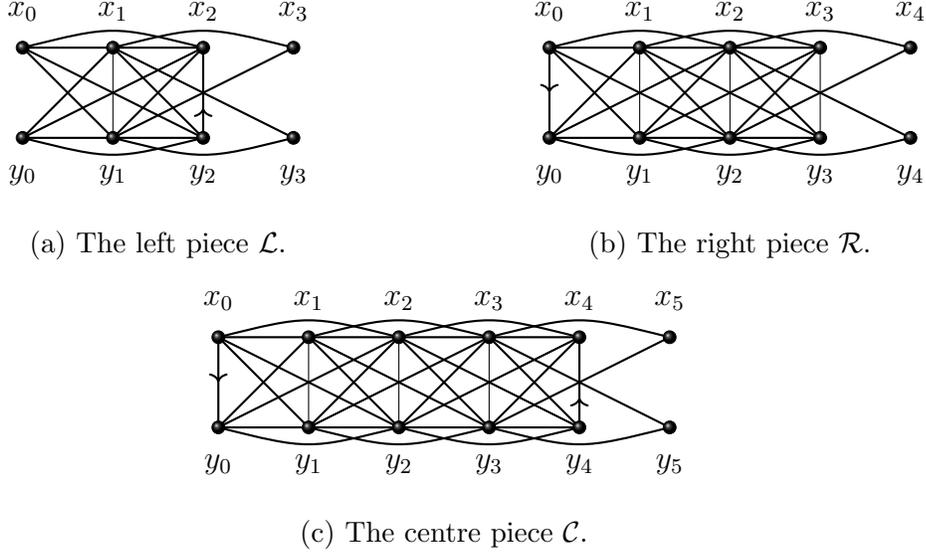

In Lemmas \ref{ComplementaryCaps} and \ref{LeftContinueRight}, we demonstrate how specific decompositions of these three digraphs into directed paths can be used to construct an admissible $F$-decomposition of $J^*_{2m}$, where $F=[d+8k, m_1, m_2, \ldots m_t]$, 
for all $d \in \{0,2,4,6\}$ and positive integers $k$. Subsequently, Lemma~\ref{GeneralFactors} constructs factors of the form $[2s]$, $s\geq 4$, which means that we will also need to construct factors with $m_i \in \{2,4,6\}$ and $t \in \{0,1,2\}$; these are also provided in Lemma~\ref{GeneralFactors}, with some small cases given in Lemma~\ref{SmallFactors}. 

First, we define the required decompositions of the left and right pieces.

\begin{definition} \label{capdef}
Let $r, \ell \geq 1$ be integers, $M=[m_1, m_2, \ldots, m_t]$ be a multiset of directed cycle lengths, and $\mathscr{E}=(E_1, E_2, \ldots, E_9)$ be an external pattern. 

\begin{enumerate}
\item 
A list of subdigraphs $\mathcal{L} = (L_1, L_2, \ldots, L_9)$ of $J_{2\ell}^*$ is an $(\mathscr{E},\ell)$-{\em left cap} if the $L_i$ are pairwise arc-disjoint, the union of their arc sets is $A(J_{2\ell}^*) \setminus \{x_{\ell} y_{\ell}\}$, and for all $i \in \{1, \ldots, 9\}$:
\begin{enumerate}

    \item $L_i$ is a directed path.
    \item  $V(L_i) \cap \{x_0, x_1, y_0, y_1\} = E_i$.
    \item For all $j \in \{2,3, \ldots, \ell-1\}$, $\{x_j,y_j\} \subseteq V(L_i)$.
     \item $s(L_i), t(L_i) \in \{x_{\ell},x_{\ell+1}, y_{\ell}, y_{\ell+1}\}$. 
\end{enumerate}

\item A list of subdigraphs $\mathcal{R} = (R_1, R_2, \ldots, R_9)$ of $J_{2r}^* \oplus \{x_0y_0\}$ is an {\em $(\mathscr{E},r,t,M)$-right cap} if the $R_i$ are pairwise arc-disjoint, the union of their arc sets is $A(J_{2r}^*) \cup \{x_0y_0\}$, and for each $i \in \{1, \ldots, 9\}$:
\begin{enumerate}
    \item $R_i$ is a vertex-disjoint union of a single directed path $P_i$ and $t$ cycles of lengths $m_1, m_2, \ldots, m_t$.
    \item For each $j \in \{0,1\}$, $x_{r+j} \in V(R_i)$ (resp.\ $y_{r+j} \in V(R_i)$) if and only if $x_j \notin E_i$ (resp.\ $y_j \notin E_i$).
    
    \item For each $j \in \{2, \ldots, r-1\}$, $\{x_j, y_j\} \subseteq V(R_i)$.
    \item $s(P_i), t(P_i) \in \{x_0,x_1,y_0,y_1\}$. 
\end{enumerate}
\noindent If $t=0$, which implies that $M=\emptyset$, we omit $t$ and $M$ from the notation and write $(\mathscr{E},r)$-right cap.
\end{enumerate}
\end{definition}

We will describe conditions which ensure that a left and right cap can be combined to create an admissible decomposition of $J_{2(\ell+r)}^*$.  The source and terminal of the directed paths in the caps clearly play an important role. Consequently, we introduce some additional terminology before proceeding.

\begin{definition}\label{InternalPatternDef}
Let $\mathscr{E}=(E_1, E_2, \ldots, E_9)$ be an external  pattern.
\begin{enumerate}
\item Let $\mathcal{L}=(L_1, \ldots, L_9)$ be an $(\mathscr{E},\ell)$-left cap.  
The {\em internal  pattern} $I_i$ of $L_i$ is the list $I_i=(s(\sigma^{-\ell}(L_i)), t(\sigma^{-\ell}(L_i)), \sigma^{-\ell}(S_i))$, where $S_i$ is the set of vertices in  $\{x_{\ell}, x_{\ell+1}, y_{\ell}, y_{\ell+1}\}$ that are internal vertices of the directed paths $L_i$.  

The {\em internal  pattern of $\mathcal{L}$} is the list $\mathcal{I}_{\mathcal{L}}=(I_1, \ldots, I_9)$.

\item Let $\mathcal{R}=(R_1, \ldots, R_9)$ be an $(\mathscr{E},r,t,M)$-right cap, and let $P_i$ be the path in $R_i$ for each $i \in \{1, \ldots, 9\}$.  
The {\em internal  pattern} $I_i'$ of $R_i$ is the list $I_i'=(t(P_i), s(P_i), \bar{S}_i)$, where $\bar{S}_i$ is the set of vertices in $\{x_0, x_{1}, y_0, y_{1}\}$ that are neither in $P_i$ nor in one of the cycles of $R_i$.  

The {\em internal  pattern of $\mathcal{R}$} is the list $\mathcal{I}_{\mathcal{R}}=(I_1', \ldots, I_9')$.
\end{enumerate}
\end{definition}

Given left and right caps, we say they have the same internal pattern if $\mathcal{I_L} = \mathcal{I_R}$. Note that, in this case, 
for each $i \in \{1, \ldots, 9\}$, $S_i = \bar{S}_i$. Furthermore, the first entry of the internal pattern of a digraph in a left cap is a source, while the first entry of the internal pattern of a digraph in a right cap is a terminal. Similarly, the second entries of the internal pattern of a left and right cap are a terminal and source, respectively. This ensures that, if the left and right cap have the same internal pattern, then $s(L_i)=t(\sigma^{\ell}(P_i))$, i.e. the source of $L_i$ is the same as the terminal of the path in $\sigma^{\ell}(R_i)$. Therefore, when the internal patterns are equal, $L+\sigma^\ell(R)$ is a directed 2-factor of $J^*_{2(\ell+r)}$.

\begin{example} 
\label{ex:capfusion}
In Figures~\ref{fig:dipathL} and~\ref{fig:dipathR}, we gave an example of a particular component of a left and right cap with the same internal patterns. The dipath $L$ from Figure~\ref{fig:dipathL} has internal pattern $(y_2, x_3, S)$, where $S=\emptyset$.As described in Example \ref{ex:capsplice}, $L+\sigma^2(R)$ is a directed cycle of length 10. This directed cycle is shown in Figure~\ref{fig:LeftRightCombined}. This concludes the example. 
\end{example}

\begin{figure}[ht]
\centering
\begin{tikzpicture}[x=1.5cm,y=1.5cm,scale=0.75]
\foreach \x in {0,...,6} {
\coordinate (x\x) at (\x,1) {};
\coordinate (y\x) at (\x,0) {};
}
\begin{scope}[thick, decoration={markings, mark=at position 0.33 with {\arrow{>}}}]
\draw[postaction={decorate}, gray] (y2) -- (x0);
\draw[postaction={decorate}, gray] (x0) -- (y1);
\draw[postaction={decorate}, gray] (y1) -- (x1);
\draw[postaction={decorate}] (y4)--(y5);
\end{scope}

\begin{scope}[thick, decoration={markings, mark=at position 0.33 with {\arrow{>}}}]
\draw[postaction={decorate}, gray] (x1) .. controls (2,0.65) .. (x3);
\draw[postaction={decorate}] (x2) .. controls (3,0.65) .. (x4);
\draw[postaction={decorate}] (y5) .. controls (4,0.45) .. (y3);
\end{scope}

\begin{scope}[thick, decoration={markings, mark=at position 0.5 with {\arrow{>}}}]
\draw[postaction={decorate}] (x4) -- (y2);
\draw[postaction={decorate}] (y3) -- (x2);
\draw[postaction={decorate}]
(x3)--(y4);
\end{scope}

\foreach \x in {3,4,5}{
	\draw[ball color=black] (y\x) circle (3pt);
	\node[below=4pt] () at (y\x) {$y_{\x}$};
}

\draw[ball color=black] (y0) circle (3pt);
\draw[ball color=white] (y1) circle (3pt);
\draw[ball color=red] (y2) circle (3pt);
\draw[ball color=white] (y6) circle (3pt);
\node[below=4pt] () at (y0) {$y_0$};
\node[below=4pt] () at (y1) {$y_{1}$};
\node[below=4pt] () at (y2) {$y_{2}$};
\node[below=4pt] () at (y6) {$y_{6}$};
    
\foreach \x in {2,4}{
	\draw[ball color=black] (x\x) circle (3pt);
	\node[above=4pt] () at (x\x) {$x_{\x}$};
}

\foreach \x in {5,6}{
	\draw[ball color=white] (x\x) circle (3pt);
	\node[above=4pt] () at (x\x) {$x_{\x}$};
}

\foreach \x in {0,1}{
	\draw[ball color=white] (x\x) circle (3pt);
	\node[above=4pt] () at (x\x) {$x_{\x}$};
}
\draw[ball color=green] (x3) circle (3pt);
\node[above=4pt] () at (x3) {$x_3$};
\end{tikzpicture}
\caption{The directed cycle $L+\sigma^2(R)$, where arcs of $L$ are drawn in grey and arcs of $\sigma^2(R)$ are drawn in black.}
\label{fig:LeftRightCombined}
\end{figure}
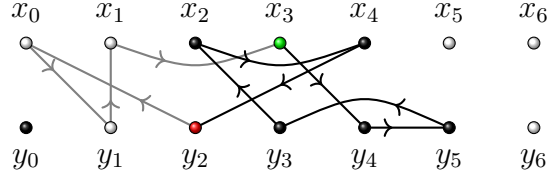

A more general example of left and right caps with matching internal patterns can be found in Appendix~\ref{Appendix:Cap_and_Continue_Piece_Picture}.  Each digraph of the right cap in the appendix consists of a directed cycle of length $4$ and a directed path which can be concatenated with the corresponding directed path in the left cap to form a directed cycle of length $10$.

\begin{definition}
\label{defn:compl}
An $(\mathscr{E},\ell)$-left cap, $\mathcal{L}$, and an $(\mathscr{E}', r, t, M)$-right cap,  $\mathcal{R}$, are {\em complementary} if  $\mathscr{E}=\mathscr{E}'$ and $\mathcal{I}_{\mathcal{L}} = \mathcal{I}_{\mathcal{R}}$. That is, they have the same external and internal patterns. 
\end{definition}

Our next step is to show how the ingredient pieces described in Definitions~\ref{capdef} and~\ref{InternalPatternDef}  can be put together to obtain an admissible decomposition of $J_{2(\ell+r)}^*$. We do so in the proof of Lemma \ref{ComplementaryCaps}, in which we show how complementary left and right caps, specifically an $(\mathscr{E},\ell)$-left cap and an  $(\mathscr{E}, r, t, M)$-right cap, give rise to an admissible decomposition of $J_{2(\ell+r)}^*$. Namely, we generalize the observations made in Example \ref{ex:capfusion} to all components of both caps by showing how the conditions given in Definitions \ref{capdef}, \ref{InternalPatternDef} and~\ref{defn:compl} ensure that we are able to construct an admissible decomposition of $J_{2(\ell+r)}^*$.

\begin{lemma} \label{ComplementaryCaps}
Let $\mathcal{L}=(L_1, \ldots, L_9)$ and $\mathcal{R}=(R_1, \ldots, R_9)$ be complementary left and right caps, respectively, such that $\mathcal{L}$ is an $(\mathscr{E},\ell)$-left cap and $\mathcal{R}$ is an $(\mathscr{E},r,t,M)$-right cap, where $M$ is a multiset $[m_1, \ldots, m_t]$ of directed cycle lengths.  For $i \in \{1, \ldots, 9\}$, let $P_i$ be the directed path in $R_i$.  If $\len(P_i)+\len(R_i)= m_0$ for all $i \in \{1, \ldots, 9\}$, then there exists an admissible $[m_0, m_1, \ldots, m_t]$-decomposition of $J^*_{2(\ell+r)}$ with external pattern $\mathscr{E}$.
\end{lemma}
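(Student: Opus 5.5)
The plan is to define, for each $i\in\{1,\ldots,9\}$, the subdigraph $F_i = L_i \oplus \sigma^{\ell}(R_i)$ of $J^*_{2(\ell+r)}$, and to show that $\{F_1,\ldots,F_9\}$ is the required admissible decomposition. (The length hypothesis should be read as $\len(L_i)+\len(P_i)=m_0$.)

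\textbf{Step 1: the arc sets.} First I check that the arc sets are right. The arcs of $J^*_{2(\ell+r)}$ split into three kinds: arcs with both ends of index at most $\ell+1$ and not involving the rung $x_\ell y_\ell$ in the direction $x_\ell\to y_\ell$; the arc $x_\ell y_\ell$ itself; and the remaining arcs, which form $\sigma^\ell(J^*_{2r})$. Each arc of $J^*_{2(\ell+r)}$ between indices $i<j$ has $j-i\le 2$ and $i\le \ell+r-1$, or is a rung $x_iy_i$/$y_ix_i$ with $1\le i\le \ell+r$. From this one sees that
\[
A(J^*_{2(\ell+r)}) = \bigl(A(J^*_{2\ell})\setminus\{x_\ell y_\ell\}\bigr) \,\dot\cup\, \sigma^\ell\bigl(A(J^*_{2r})\cup\{x_0y_0\}\bigr).
\]
The one thing to verify here is that the arcs of $J^*_{2\ell}$ spanning indices $\ell-1,\ell,\ell+1$ are exactly the non-$\sigma^\ell$ part, since $J_{2\ell}$ has no rungs at $\ell+1$ and $J_{2r}$ has no rung at $0$ except the added arc. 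Since the $L_i$ partition the first set and the $R_i$ partition $A(J^*_{2r})\cup\{x_0y_0\}$, the $F_i$ are pairwise arc-disjoint and cover every arc.

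\textbf{Step 2: each $F_i$ is 2-regular with the right cycle type.} By complementarity, $I_i=I_i'$, so $s(L_i)=\sigma^\ell(t(P_i))$ and $t(L_i)=\sigma^\ell(s(P_i))$, as noted after Definition~\ref{InternalPatternDef}. I then need $L_i$ and $\sigma^\ell(R_i)$ to meet only in these two endpoints.

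Their possible common vertices lie in $\{x_\ell,x_{\ell+1},y_\ell,y_{\ell+1}\}$, because $L_i$ lives on indices $\le \ell+1$ and $\sigma^\ell(R_i)$ on indices $\ge \ell$. Condition 2(b) does not govern these vertices directly; instead I use the internal pattern. The internal vertices of $L_i$ among these four are $\sigma^\ell(S_i)$. By definition, $\bar S_i$ consists of the vertices of $\{x_0,x_1,y_0,y_1\}$ not in $R_i$, and $S_i=\bar S_i$. So every internal vertex of $L_i$ in the overlap region is absent from $\sigma^\ell(R_i)$. Conversely, a vertex of $\sigma^\ell(R_i)$ in the region is either an endpoint of $\sigma^\ell(P_i)$, which is an endpoint of $L_i$, or it lies outside $\sigma^\ell(S_i)$ and so is not internal to $L_i$.

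It follows that $L_i+\sigma^\ell(P_i)$ is a directed cycle of length $m_0$, and that the cycles of $\sigma^\ell(R_i)$ are vertex-disjoint from it. This gives cycle type $[m_0,m_1,\ldots,m_t]$. (If $s(L_i)=t(L_i)$ would be an issue, note that $L_i$ is a path, so its endpoints are distinct.)

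\textbf{Step 3: admissibility and order.} The vertices of $F_i$ with index $0,1$ are exactly $E_i$, by 1(b). Those with index $\ell+r,\ell+r+1$ are determined by 2(b) applied to $R_i$, which makes $F_i$ contain exactly one vertex from each pair $\{x_j,x_{j+\ell+r}\}$, $\{y_j,y_{j+\ell+r}\}$ for $j=0,1$. Vertices with index $2\le j\le \ell-1$ are covered by 1(c), and those with $\ell+2\le j\le \ell+r-1$ by 2(c).

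For indices $\ell$ and $\ell+1$, I must show all four vertices lie in $F_i$. Each vertex there is either an endpoint of $L_i$, in $\sigma^\ell(S_i)$, or otherwise must be in $\sigma^\ell(R_i)$. This holds because $\bar S_i$ is precisely the set of vertices not in $R_i$, so anything not in $\bar S_i$ is in $R_i$. Counting then gives order $2(\ell+r)$, and the external pattern is $\mathscr{E}$.

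The main obstacle is this bookkeeping at the junction columns $\ell,\ell+1$ together with the arc partition near the rungs; both are resolved by carefully unwinding Definitions~\ref{capdef}--\ref{defn:compl}.
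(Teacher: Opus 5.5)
Your proof is correct and follows essentially the same route as the paper: take $L_i\oplus\sigma^{\ell}(R_i)$, and use the equality of internal patterns to close $L_i+\sigma^{\ell}(P_i)$ into a directed $m_0$-cycle that meets $\sigma^{\ell}(R_i)$ only at the endpoints of $\sigma^{\ell}(P_i)$; arc-disjointness comes from splitting $A(J^*_{2(\ell+r)})$ at the arc $x_\ell y_\ell$, and admissibility from conditions 1(b) and 2(b). You are slightly more thorough than the paper in checking that all four vertices in columns $\ell,\ell+1$ are saturated, which is what gives order $2(\ell+r)$; your verbal description of the ``first kind'' of arcs should, however, exclude the arcs between columns $\ell$ and $\ell+1$ and the rung at $\ell+1$, although your displayed identity is the correct one.
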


\begin{proof} 
For each $i \in \{1, \ldots, 9\}$, let $D_i$ be the digraph formed by the taking the union of $L_i$ and $\sigma^{\ell}(R_i)$. We claim that the digraphs $D_1, \ldots, D_9$ form the required decomposition.   We will first verify that, for each $i \in \{1, \ldots, 9\}$, $D_i \cong [m_0, m_1, \ldots, m_t]$. To do this, it suffices to check that $L_i +\sigma^{\ell}(P_i)$ is a directed cycle of length $m_0$ that is vertex-disjoint from any directed cycle in $\sigma^{\ell}(R_i)$.

Let $(v_1, v_2, S_i)$ be the internal pattern of $L_i$ and $R_i$ where $S_i$ is the set of vertices in $\{x_0, x_1, y_0, y_1\}$ that are also internal vertices of $\sigma^{-\ell}(L_i)$. By item 2 of Definition~\ref{InternalPatternDef}, $S_i$ is also the set of vertices in $\{x_0, x_1, y_0, y_1\}$ that do not appear in $R_i$. Therefore, the internal vertices of $L_i$ are not contained in $\sigma^\ell(R_i)$. Since $\mathcal{I}_{\mathcal{L}} = \mathcal{I}_{\mathcal{R}}$, items 1 and 2 of Definition~\ref{InternalPatternDef} imply that $\sigma^{\ell}(v_1)=s(L_i)=t(\sigma^{\ell}(P_i))$ and $\sigma^{\ell}(v_2)=t(L_i)=s(\sigma^{\ell}(P_i))$. As a result, the concatenation $L_i +\sigma^{\ell}(P_i)$ is indeed possible. In addition, since the internal vertices of $L_i$ do not appear in $\sigma^\ell(P_i)$, the digraph $L_i +\sigma^{\ell}(P_i)$ is a directed cycle, which has length $m_0$ because $\len(P_i)+\len(R_i)=m_0$. Lastly, since the internal vertices of $L_i$ do not appear in digraphs of $\sigma^\ell(R_i)$, this directed cycle is also vertex-disjoint from the remaining directed cycles of $R_i$ and thus, $D_i\cong [m_0, m_1, \ldots, m_t]$, as claimed. 

We now show that $\{D_1, D_2, \ldots, D_9\}$ is a decomposition of $J_{2(\ell+r)}^*$.  It is easy to see that the union of their arcs contains all arcs of $J_{2(\ell+r)}^*$. Recall that digraphs in $\cal L$ are pairwise arc-disjoint, as are the digraphs in  $(\sigma^{\ell}(R_1), \sigma^{\ell}(R_2), \ldots, \sigma^{\ell}(R_9))$, by Definition~\ref{capdef}. Moreover, by item 2 of Definition \ref{capdef}, the union of the arc sets of $\sigma^{\ell}(R_1), \ldots, \sigma^{\ell}(R_9)$ is $A(\sigma^{\ell}(J_{2r}^*)) \cup \{x_{\ell}y_{\ell}\}$. Therefore, if $L_i$ and $\sigma^{\ell}(R_j)$ (where $i,j \in \{1, \ldots, 9\}$) are not arc-disjoint, then $L_i$ contains the arc $x_{\ell}y_{\ell}$, thereby contradicting the definition of left cap.

Lastly, we show that the decomposition $\{D_1, D_2, \ldots, D_9\}$is admissible with external  pattern $\mathscr{E}$.  Since $\mathcal{L}$ and $\mathcal{R}$ are complementary,  then for each $w \in \{x_0, x_1, y_0, y_1\}$,  items 1(b) and 2(b) of Definition \ref{capdef} jointly imply that $w$ occurs in $L_i$ if and only if $\sigma^{r}(w)$ does not occur in $R_i$.  Hence, for each $w \in \{x_0, x_1, y_0, y_1\}$, exactly one of $w$ and $\sigma^{\ell+r}(w)$ occurs in $D_i$.  Thus, the decomposition is admissible; it is easy to see that its external pattern is $\mathscr{E}$.
\end{proof}

We will construct left and right caps in order to create cycles of length $m \in \{8,10,12,14\}$ in $J_{m}^*$, as well as some other $2$-regular digraphs involving $2$- and $4$-cycles.  In order to extend our constructions to directed cycles of arbitrary even length, we now introduce the concept of a {\em centre piece}. Informally, a centre piece is a pair of directed paths that can be placed between left and right caps to create longer cycles.  These dipaths must satisfy a particular set of conditions, given in Definition \ref{continuedef}.

\begin{definition}\label{continuedef}  
Let $c>0$ be an integer. For each $i \in \{1, \ldots, 9\}$, let $C_i=(Q_i, U_i)$ denote the digraph on vertex set $\{x_i,y_i \mid i=0, \ldots, c+1\}$  comprised of two vertex-disjoint directed paths $Q_i$ and $U_i$.  The list $(C_1, \ldots, C_9)$ is called a {\em centre piece of length $c$} if:

\begin{enumerate}
\item \label{CentreArcs} The union of the arc sets of the $C_i$ is 
$\Big(A(J_{2c}^*)\cup \{x_0 y_0\}\Big) \setminus \{y_{c}x_{c}\}$, and 
\item For each $i \in \{1, \ldots, 9\}$:
\begin{enumerate}
    \item \label{CentreLength}$\len(Q_i)+\len(U_i)=2c$.  
    \item 
    \label{SourceTerminal}
    $s(Q_i)\in \{x_0, y_0, x_1, y_1\}$,  $t(Q_i)=s(\sigma^{c}(Q_i))$, $s(U_i)\in \{x_{c}, y_{c}, x_{c+1}, y_{c+1}\}$, and $t(U_i)=s(\sigma^{-c}(U_i))$.
    \item \label{InternalVertices}
    For each vertex $u \in \{x_0,x_1,y_0,y_1\} \setminus \{s(Q_i), t(U_i)\}$, exactly one of $u$ and 
    $\sigma^{c}(u)$ is in $Q_i$ or $U_i$.
    \item \label{Vertexdisjoint} Each vertex in  $\{x_2,y_2,x_3,y_3, \ldots, x_{c-1}, y_{c-1}\}$ is in precisely one of $Q_i$ or $U_i$.
\end{enumerate}
\end{enumerate}
\end{definition}

We note that Conditions \ref{SourceTerminal} and \ref{InternalVertices} of Definition~\ref{continuedef} imply that copies of a component of a centre piece can be concatenated to form a longer centre piece. That is, given a centre piece $(C_1, \ldots, C_9)$ of length $c$, where $C_i=(Q_i,U_i)$, the list $(C_1', \ldots, C_9')$, where $C_i'=(Q_i+\sigma^{c}(Q_i), \sigma^{c}(U_i) + U_i)$, is a centre piece of length $2c$. In the following, we define centre pieces of length $c=4$, which can be concatenated to obtain centre pieces of all lengths congruent to $0 \pmod 4$.  We formalize this idea in Lemma \ref{ConcatenateCentre}.

\begin{lemma} \label{ConcatenateCentre}
If there exists a centre piece of length $4$, then there exists a centre piece of length $4k$ for all $k\geqslant 1$. 
\end{lemma}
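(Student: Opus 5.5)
The natural approach is induction on $k$, using the remark after Definition~\ref{continuedef}: concatenating a centre piece of length $c$ with a shifted centre piece of length $c'$ yields one of length $c+c'$. Concretely, let $(C_1,\ldots,C_9)$ be a centre piece of length $4$ with $C_i=(Q_i,U_i)$. Suppose we have a centre piece $(C'_1,\ldots,C'_9)$ of length $4(k-1)$ with $C'_i=(Q'_i,U'_i)$. We define
\[
Q''_i=Q'_i+\sigma^{4(k-1)}(Q_i) \quad\text{and}\quad U''_i=\sigma^{4(k-1)}(U_i)+U'_i,
\]
and claim that $(C''_1,\ldots,C''_9)$ with $C''_i=(Q''_i,U''_i)$ is a centre piece of length $4k$. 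Proving the slightly more general statement, that pieces of lengths $c$ and $c'$ splice to length $c+c'$, costs nothing extra. The base case $k=1$ is the hypothesis.

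We verify the conditions of Definition~\ref{continuedef} in turn.

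\textbf{Concatenation is defined.} By condition \ref{SourceTerminal} for the shorter piece, $t(Q'_i)=\sigma^{4(k-1)}(s(Q'_i))$. For the concatenation we need this to equal $s(\sigma^{4(k-1)}(Q_i))$, i.e.\ $s(Q'_i)=s(Q_i)$. This is the main obstacle: it holds automatically when $C'$ is itself built from copies of $C$, so the induction should carry the invariants $s(Q''_i)=s(Q_i)$ and $t(U''_i)=t(U_i)$. With that invariant, the endpoint conditions for $C''$ follow from those for $C$, shifted. The same reasoning applies to the $U$ paths.

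\textbf{Paths and length.} Interior vertices of the two halves lie in the disjoint index ranges $\{2,\ldots,4(k-1)-1\}$ and $\{4(k-1)+2,\ldots,4k-1\}$, so the result is a path. The overlap columns $4(k-1),4(k-1)+1$ need separate care: condition \ref{InternalVertices} says each vertex $u\in\{x_0,x_1,y_0,y_1\}\setminus\{s(Q_i),t(U_i)\}$ appears in exactly one of $u$ and $\sigma^{c}(u)$. Consequently, the vertices of $C'_i$ in the last two columns and those of the shifted $C_i$ in its first two columns never collide, except at the junction vertices where the paths are glued. This gives vertex-disjointness of $Q''_i$ and $U''_i$ and condition \ref{Vertexdisjoint}, since each vertex of columns $2,\ldots,4k-1$ is covered exactly once. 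Condition \ref{InternalVertices} for $C''$ is inherited from the outer ends of the two pieces. Lengths add, giving $8(k-1)+8=8k=2\cdot 4k$.

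\textbf{Arc sets.} The arcs of $C'$ form $\big(A(J^*_{8(k-1)})\cup\{x_0y_0\}\big)\setminus\{y_{4(k-1)}x_{4(k-1)}\}$. The arcs of the shifted $C$ form $\sigma^{4(k-1)}$ of the analogous set, which contains $x_{4(k-1)}y_{4(k-1)}$ but not $y_{4k}x_{4k}$. The two sets overlap only in arcs inside columns $4(k-1),4(k-1)+1$. Here I would note that $J^*_{2c}$ contains no arcs between $x_c,y_c,x_{c+1},y_{c+1}$ other than those at $y_cx_c$ and $x_cy_c$ (its edges are only $x_iy_i$ for $1\le i\le c$ and forward edges from $i\le c-1$). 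The only possible shared arc is therefore the rung at column $4(k-1)$: one piece omits $y_{4(k-1)}x_{4(k-1)}$ and the other supplies only $x_{4(k-1)}y_{4(k-1)}$. So the union is exactly the required set, arc-disjointly, which completes the induction.
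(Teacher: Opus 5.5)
Your construction is the paper's: unrolling your induction gives $Q_i^0+\cdots+Q_i^{k-1}$ and $U_i^{k-1}+\cdots+U_i^{0}$ with $Q_i^q=\sigma^{4q}(Q_i)$ and $U_i^q=\sigma^{4q}(U_i)$. Your endpoint and vertex bookkeeping is fine. The gap is in the final arc-set step.

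You take the arc set of a centre piece of length $c$ to be $\big(A(J^*_{2c})\cup\{x_0y_0\}\big)\setminus\{y_cx_c\}$, as printed in Definition~\ref{continuedef}. The rung at column $c$ is an edge of $J_{2c}$, so under this reading $C'$ \emph{contains} $x_cy_c$, where $c=4(k-1)$. The shifted piece $\sigma^{c}(C)$ also contributes $\sigma^{c}(x_0y_0)=x_cy_c$. So in exactly the column you correctly single out as the only possible collision, $x_cy_c$ is used twice and $y_cx_c$ is used by neither piece. But column $c$ is now an interior rung of $J_{2(c+4)}$, so both of its arcs must appear. Your sentence ``one piece omits $y_cx_c$ and the other supplies only $x_cy_c$'' therefore describes a collision, not a complementary pair. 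With the definition as you quote it, Condition~\ref{CentreArcs} fails for the spliced piece, and your conclusion does not follow from your premises.

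The repair is to recognise that the printed definition has a typo: the excluded arc must be $x_cy_c$. Several parts of the paper confirm this:
\begin{itemize}
\item the prose before Definition~\ref{capdef} says the centre piece adds $x_0y_0$ and removes $x_\ell y_\ell$;
\item Figure~\ref{Fig:LeftMiddleRight} draws it this way;
\item this is what lets a centre piece mesh with a left cap (which omits $x_\ell y_\ell$) and a right cap (which adds $x_0y_0$);
\item the explicit length-$4$ centre piece in Lemma~\ref{GeneralFactors} satisfies it: $C_4$ uses $y_4x_4$ and no $C_i$ uses $x_4y_4$.
\end{itemize}
With that reading, $C'$ supplies only $y_cx_c$ in column $c$ and $\sigma^{c}(C)$ supplies only $x_cy_c$. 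Your junction analysis then gives exactly $\big(A(J^*_{2(c+4)})\cup\{x_0y_0\}\big)\setminus\{x_{c+4}y_{c+4}\}$. Once corrected, your explicit treatment of the junction column is more careful than the paper's own proof. Its ``shift back by $\sigma^{-4\ell}$'' argument tacitly assumes that a shared arc comes from the same translate in both paths, which is exactly what can fail at this column.

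Two smaller points.
\begin{itemize}
\item Splicing arbitrary pieces of lengths $c$ and $c'$ does not come ``at no extra cost''; it needs matching internal patterns.
\item Your no-collision and coverage argument in columns $c,c+1$ uses more than Condition~\ref{InternalVertices}. It also needs $C'_i$ and $C_i$ to meet columns $0,1$ in the same vertices. Add this to your inductive invariants; it holds because those columns come from the first copy of $C_i$.
\end{itemize}
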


\begin{proof}
Let $\mathcal{C}=(C_1, \ldots, C_9)$ be a centre piece of length 4 such that $C_i=(Q_i,U_i)$. For each $q\in \{0,1,\ldots, k-1\}$, let $Q^q_i=\sigma^{4q}(Q_i)$ and $U^q_i=\sigma^{4q}(U_i)$, and let $C'_i=(Q_i', U_i')$, where  

\[ Q_i'=Q_i^0+Q_i^1+\cdots +Q_i^{k-1}\ \textrm{and} \ U_i'=U_i^{k}+U_i^{k-1}+\cdots +U_i^0. \]

We claim that $\mathcal{C}'=(C'_1, \ldots, C'_9)$ is a centre piece of length $4k$. To that end, we verify that all conditions of Definition \ref{continuedef} are met. 

First, we must show that $Q'_i$ and $U_i'$ are vertex-disjoint directed paths. By virtue of Condition \ref{SourceTerminal} of Definition \ref{continuedef}, we have that $t(Q_i^q)=s(Q_i^{q+1})$ for all $q\in \{0,1,\ldots, k-1\}$,  and $t(U_i^q)=s(U_i^{q-1})$ for all $q\in \{1,\ldots, k\}$. Therefore the above concatenation is in fact possible. It remains to be shown that $Q_i'$ and $U_i'$ are vertex-disjoint. Note that, if $q_1-q_2\geqslant 2$, then $Q_i^{q_1}$ and $Q_i^{q_2}$ are clearly vertex-disjoint. In addition, Conditions \ref{SourceTerminal} and \ref{Vertexdisjoint} of Definition~\ref{continuedef} jointly imply that the only common vertices of $Q_i^q$ and $Q_i^{q+1}$ are their terminal and source, respectively. By virtue of conditions \ref{SourceTerminal} and \ref{InternalVertices}, $Q'_i$ and $U'_i$ are vertex-disjoint directed paths, and by Condition \ref{CentreLength} on the constituent paths in the concatenation, the sum of their lengths is $8k$. This means that $\mathcal{C}'$ satisfies Condition \ref{CentreLength}.

Observe that, by Condition \ref{SourceTerminal}, $s(\sigma^{4}(Q_i))=t(Q_i)$. Since $Q_i^0=Q_i$, $s(\sigma^{4k}(Q_i^0))=t(Q_i^{4k})$. Therefore, $s(\sigma^{4k}(Q'_i))=t(Q_i')$ A similar reasoning applies to $U_i'$ and thus, Condition \ref{SourceTerminal} holds for $\mathcal{C}'$. Again, by a similar reasoning, it can be shown that Condition \ref{InternalVertices} also holds for $\mathcal{C}'$. 

As for Condition \ref{Vertexdisjoint}, consider $x_{r+4\ell} \in \{x_2, \ldots, x_{4k-1}\}$, where $0 \leqslant r<4$ and $0\leqslant \ell <k$ (a similar reasoning applies to $y_{4+4\ell}$). Because $Q_i'$ and $U_i'$ are vertex-disjoint, it suffices to show that $x_{r+4\ell}$ must be in at least one of $Q_i'$ or $U_i'$.  Since $\sigma^{-4\ell}(x_{r+4\ell})=x_r$ is in exactly one of $Q_i$ or $U_i$, $\cal C'$ satisfies Condition \ref{Vertexdisjoint}. 

In order to show that Condition \ref{CentreArcs} holds for $\cal C'$, it suffices to show that directed paths in $\{U'_i, U'_j, Q'_i, Q'_j\}$ are pairwise arc-disjoint when $i \neq j$. 
Clearly, $U_i'$ and $Q'_i$ are arc-disjoint; likewise for $Q'_j$ and $U_j'$. Suppose that there exists an arc $a\in A(U'_i)\cap A(U'_j)$. Without loss of generality, suppose that $a=(x_{r+4\ell}, x_{s+4\ell})$ where $0\leqslant \ell\leqslant k$ and $0\leqslant r,s <4$. 

Consider the arc $(\sigma^{-4\ell}(x_{r+4\ell}), \sigma^{-4\ell}(x_{s+4\ell}))=(x_r, x_s)$. By definition, this arc lies in $A(U_i)$ and $A(U_j)$, thereby contradicting the fact that these dipaths are arc-disjoint. A similar reasoning applies to the remaining pairs of dipaths, namely $Q_i'$ and $Q_j'$, $U_i'$ and $Q_j'$, and $U_j'$ and $Q_i'$. \end{proof}

Next, we give precise conditions that a centre piece of length $4k$ must satisfy in order to form an admissible decomposition of $J_{2(\ell+4k+r)}^*$ when paired with a complementary pair of left and right caps.

\begin{definition}
Given a centre piece $\mathcal{C}=(C_1,\ldots, C_9)$, the {\em internal  pattern} of $C_i=(Q_i,U_i)$ is the list $I''_i=(t(U_i),s(Q_i),\bar{S})$, where $\bar{S}$ is the set of vertices in $\{x_0,x_1,y_0,y_1\}$ that are not in $V(Q_i)\cup V(U_i)$.  The {\em internal  pattern of $\mathcal{C}$} is the list $\mathcal{I}_{\mathcal{C}} = (I_1'', \ldots, I_9'')$. 

We say that a left (resp.\ right) cap $\mathcal{L}$ (resp.\ $\mathcal{R}$) and a centre piece $\mathcal{C}$ are {\em complementary} if they have the same internal patterns, that is, $\mathcal{I}_{\mathcal{L}} = \mathcal{I}_{\mathcal{C}}$ (resp.\ $\mathcal{I}_{\mathcal{R}} = \mathcal{I}_{\mathcal{C}}$).
\end{definition}

Although we use the term {\em complementary} between left and right caps as well as between a cap and a centre piece, this term describes a similar concept: that the caps or the cap and centre piece can be joined to create directed cycles of longer length.  We note that, if $\mathcal{L}$ and $\mathcal{R}$ are complementary left and right caps, and a centre piece $\mathcal{C}$ is complementary to either, then it is complementary to both. 

\begin{lemma}\label{LeftContinueRight}
Let $\mathcal{L}=(L_1,\ldots,L_9)$ and $\mathcal{R}=(R_1, \ldots, R_9)$ be complementary left and right caps satisfying the hypothesis of Lemma~\ref{ComplementaryCaps} and let $\mathcal{C}$ be a centre piece of length $4$.  If $\mathcal{L}$ and $\mathcal{C}$ are complementary,  then there exists an admissible $[m_0+8k, m_1, m_2, \ldots, m_t]$-decomposition of $J_{2(\ell+4k+r)}^*$ with external pattern $\mathscr{E}$ for each integer $k \geq 0$.
\end{lemma}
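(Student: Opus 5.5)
The plan is to reduce to Lemma~\ref{ComplementaryCaps} by absorbing a centre piece of length $4k$ into the left cap. For $k=0$ the statement is exactly Lemma~\ref{ComplementaryCaps}. For $k\ge 1$, Lemma~\ref{ConcatenateCentre} gives a centre piece $\mathcal{C}'=(C_1',\ldots,C_9')$ of length $4k$, with $C_i'=(Q_i',U_i')$ and $\len(Q_i')+\len(U_i')=8k$. Since the internal pattern of $C_i'$ is $(t(U_i'),s(Q_i'),\bar S)$, and these endpoints and vertex sets are those of the length-$4$ pieces at the two ends of the concatenation, I will first check that $\mathcal{C}'$ has the same internal pattern as $\mathcal{C}$. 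It is then complementary to $\mathcal{L}$, and hence to $\mathcal{R}$.

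The key construction is the new left cap $\mathcal{L}'=(L_1',\ldots,L_9')$ in $J^*_{2(\ell+4k)}$, defined by
\[
L_i' = \sigma^{\ell}(U_i') + L_i + \sigma^{\ell}(Q_i').
\]
Complementarity gives $t(L_i)=\sigma^{\ell}(s(Q_i'))$ and $s(L_i)=\sigma^\ell(t(U_i'))$, so the concatenation is defined. Next I verify the left-cap axioms for $(\mathscr{E},\ell+4k)$.

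\begin{itemize}
\item \emph{Path.} $L_i'$ is a path. The shifted centre paths meet $L_i$ only at the joining vertices; this uses the internal-pattern equality on $\bar S$ and Condition~\ref{InternalVertices}.
\item \emph{Endpoints.} $s(L_i')=\sigma^{\ell}(s(U_i'))$ and $t(L_i')=\sigma^{\ell}(t(Q_i'))$ lie in $\{x_{\ell+4k},x_{\ell+4k+1},y_{\ell+4k},y_{\ell+4k+1}\}$, by Condition~\ref{SourceTerminal}.
\item \emph{Pattern.} The external pattern is unchanged, since the centre piece lives on indices $\ge \ell$.
\item \emph{Coverage.} Every $x_j,y_j$ with $2\le j\le \ell+4k-1$ is covered. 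This follows from the left-cap condition on $L_i$, Condition~\ref{Vertexdisjoint} for the centre, and Condition~\ref{InternalVertices} at the seam indices $\ell,\ell+1$.
\item \emph{Arcs.} $A(J^*_{2\ell})\setminus\{x_\ell y_\ell\}$ together with $\sigma^\ell\big((A(J_{8k}^*)\cup\{x_0y_0\})\setminus\{y_{4k}x_{4k}\}\big)$ is exactly $A(J^*_{2(\ell+4k)})$ minus the arc at the right boundary. This needs the missing-arc conventions for caps and centre pieces to match up; I expect some care here, since the centre definition removes $y_cx_c$ while the left cap removes $x_\ell y_\ell$, and I will reconcile the orientation convention.
\end{itemize}

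I will also check that the internal pattern of $\mathcal{L}'$ equals $\mathcal{I}_\mathcal{C}=\mathcal{I}_\mathcal{R}$. This is where Condition~\ref{SourceTerminal} ($t(Q')=\sigma^{4k}s(Q')$, $s(U')=\sigma^{4k}t(U')$) is used: after shifting back by $\ell+4k$, the source and terminal of $L_i'$ coincide with those of $L_i$ shifted back by $\ell$.

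Finally, $\len(L_i')=\len(L_i)+8k$. Applying Lemma~\ref{ComplementaryCaps} to $\mathcal{L}'$ and $\mathcal{R}$ then yields an admissible $[m_0+8k,m_1,\ldots,m_t]$-decomposition of $J^*_{2(\ell+4k+r)}$ with external pattern $\mathscr{E}$. The main obstacle I expect is the vertex-disjointness and coverage bookkeeping at the two seams. The plan there is to track, for each seam vertex, which of $L_i$, $Q_i'$, $U_i'$ contains it, using the equalities of the $\bar S$ components.
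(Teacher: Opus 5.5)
Your proposal is correct, and it yields exactly the paper's decomposition. Your cycle $L_i'+\sigma^{\ell+4k}(P_i)=\sigma^{\ell}(U_i')+L_i+\sigma^{\ell}(Q_i')+\sigma^{\ell+4k}(P_i)$ is a rotation of the paper's $D_i=L_i+\sigma^{\ell}(Q_i)+\sigma^{\ell+4k}(P_i)+\sigma^{\ell}(U_i)$.

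The difference is how the verification is organized:
\begin{itemize}
\item The paper never forms a new cap. It checks directly that each $L_i\oplus\sigma^{\ell}(C_i)\oplus\sigma^{\ell+4k}(R_i)$ is an admissible $2$-regular digraph and that the nine are pairwise arc-disjoint. This is shorter, but the seam checks are left implicit.
\item You first show that a left cap absorbs a complementary centre piece, giving an $(\mathscr{E},\ell+4k)$-left cap with the same internal pattern. Lemma~\ref{ComplementaryCaps} then does all the cycle, arc and admissibility bookkeeping. This is more modular and isolates where each centre-piece axiom is used.
\end{itemize}

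Two points to tighten when you write it out.

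\emph{Where Condition~\ref{InternalVertices} enters.} At the near seam (indices $\ell,\ell+1$) you only need $\mathcal{I}_{\mathcal{L}}=\mathcal{I}_{\mathcal{C}}$. There, the seam vertices of $L_i$ are its two endpoints together with $S_i$, and the seam vertices of the centre are the complement of $\bar S=\sigma^{-\ell}(S_i)$. Condition~\ref{InternalVertices}, applied to the length-$4k$ piece, is what you need at the far seam, to get the third coordinate of the internal pattern of $L_i'$:
\begin{itemize}
\item for $u\in\{x_0,x_1,y_0,y_1\}\setminus\{s(Q_i),t(U_i)\}$, we have $u\in\bar S$ if and only if $\sigma^{\ell+4k}(u)$ is an internal vertex of $L_i'$;
\item for $u\in\{s(Q_i),t(U_i)\}$, the vertex $\sigma^{\ell+4k}(u)$ is an endpoint of $L_i'$.
\end{itemize}
Your sketch only handles the first two coordinates, via Condition~\ref{SourceTerminal}.

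\emph{The orientation clash you flagged.} This is a typo in Definition~\ref{continuedef}: the removed arc should be $x_cy_c$, not $y_cx_c$. This matches Figure~\ref{Fig:LeftMiddleRight} and the explicit centre piece in Lemma~\ref{GeneralFactors}, where $C_4$ contains $y_4x_4$ and no $C_i$ contains $x_4y_4$. Read literally, the arc $x_{\ell+4k}y_{\ell+4k}$ would be used twice, once by some $\sigma^{\ell}(C_j)$ and once by some $\sigma^{\ell+4k}(R_{j'})$. So the paper's proof needs this correction just as yours does. With it, the union of your new cap's arcs is exactly $A(J^*_{2(\ell+4k)})\setminus\{x_{\ell+4k}y_{\ell+4k}\}$, as a left cap requires.
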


\begin{proof}
The case $k=0$ is Lemma~\ref{ComplementaryCaps} and thus, we assume $k \geq 1$.  By Lemma~\ref{ConcatenateCentre}, there is a centre piece $\mathcal{C}_{4k}=(C_1, \ldots, C_9)$ of length $4k$ with the same internal pattern as $\mathcal{C}$. 

For each $i \in \{1, \ldots, 9\}$, we have $C_i=(Q_i, U_i)$.  Furthermore, recall that $R_i$ is comprised of a single directed path $P_i$ and a set of $t$ directed cycles of lengths $m_1, m_2, \ldots, m_t$. Note that the set of directed cycles associated with $R_i$ is possibly empty and recall that the sum of the lengths of $L_i$ and $P_i$ is $m_0$. 

\noindent Consider the subdigraph \[D_i = L_i + \sigma^{\ell}(Q_i) + \sigma^{\ell+4k}(P_i) + \sigma^{\ell}(U_i)\] of $J_{2(\ell+4k+r)}^*$.  We will show that $D_i$ is a directed cycle of length $m_0+8k$. 

We first argue that the concatenations are well-defined. We note that, since $\mathcal{I}_{\mathcal{L}} = \mathcal{I}_{\mathcal{C}} = \mathcal{I}_{\mathcal{C}_{4k}}$, then $t(L_i)=s(\sigma^{\ell}(Q_i))$ and $s(L_i)=t(\sigma^{\ell}(U_i))$. Furthermore, since $\mathcal{I}_{\mathcal{R}}=\mathcal{I}_{\mathcal{C}}=\mathcal{I}_{\mathcal{C}_{4k}}$, then $s(\sigma^{\ell+4k}(P_i))=t(\sigma^{\ell}(Q_i))$ and $t(\sigma^{\ell+4k}(P_i))=s(\sigma^{\ell}(U_i))$. Therefore, the concatenations are well-defined.

Next, we argue that, other than its endpoints, the digraph $D_i$ has no repeated vertices. By Definition~\ref{continuedef}, $\sigma^{\ell}(Q_i)$ and $\sigma^{\ell}(U_i)$ are vertex-disjoint.  Because $\mathcal{I}_{\mathcal{C}_{4k}}=\mathcal{I}_{\mathcal{L}}=\mathcal{I}_{\mathcal{R}}$, the dipaths $L_i$ and $\sigma^{\ell+4k}(P_i)$ have no vertices in common with $\sigma^{\ell}(Q_i)$ or  $\sigma^{\ell}(U_i)$ other than their endpoints. Therefore, we have that $D_i$ is a directed cycle. Moreover, note that the directed cycle $D_i$ has length $m_0+8k$ since $\len(\sigma^{\ell}(Q_i))+\len(\sigma^{\ell}(U_i))=8k$ by Condition~(\ref{CentreLength}) of Definition~\ref{continuedef} and $\len(L_i)+\len(P_i)=m_0$ by hypothesis.

Note that $D_i$ is vertex-disjoint from the cycles in $R_i$ (if any) because these cycles are vertex-disjoint from $P_i$. We then construct the digraph $F_i=L_i \oplus \sigma^{\ell}(C_i) \oplus \sigma^{\ell+4k}(R_i)$, which is the union of the cycle $D_i$ of length $m_0+8k$ with the directed cycles of lengths $m_1, m_2, \ldots, m_t$ of $R_i$. Consequently, $F_i$ is a 2-regular subdigraph of $J_{2(\ell+4k+r)}^*$. That $F_i$ is admissible follows by a similar reasoning to the proof of Lemma~\ref{ComplementaryCaps}.

It remains to be shown that $\mathcal{F}=(F_1,\ldots, F_9)$ is a decomposition of $J_{2(\ell+4k+r)}^*$ with external pattern $\mathscr{E}$.  To verify that $\mathcal{F}$ is a decomposition, it suffices to show that the digraphs $F_1, \ldots, F_9$ are pairwise arc-disjoint.  Consider $F_i$ and $F_j$, where $i \neq j$.  To see that they are arc-disjoint, first note that $L_i$ and $L_j$ are arc-disjoint by definition, as are $\sigma^{\ell}(C_i)$ and $\sigma^{\ell}(C_j)$, and $\sigma^{\ell+4k}(R_i)$ and $\sigma^{\ell+4k}(R_j)$.  Further, it is clear that since $k \geq 1$, $L_i$ and $\sigma^{\ell+4k}(R_j)$ are arc-disjoint, as are $L_j$ and $\sigma^{\ell+4k}(R_i)$.  Since the arc set of $L_i$ is contained in $A(J_{2\ell}^*) \setminus \{x_{\ell}y_{\ell}\}$ and the arc set of $\sigma^{\ell}(C_j)$ is contained in $(A(\sigma^{\ell}(J_{8k}^*)) \cup \{x_{\ell}y_{\ell}\}) \setminus\{y_{4k+\ell}x_{4k+\ell}\}$, it is straightforward to check that $L_i$ and $\sigma^{\ell}(C_j)$ are arc-disjoint.  All remaining pairs of digraphs in $\{L_i, \sigma^{\ell}(C_i), \sigma^{\ell}(C_j), \sigma^{\ell+4k}(R_i), \sigma^{\ell+4k}(R_j)\}$ can be shown to be arc-disjoint by a similar reasoning. Clearly, the decomposition $\mathcal{F}$ has the same external pattern as $\mathcal{L}$ and $\mathcal{R}$, which is $\mathscr{E}$.  \end{proof}

As a consequence of Lemma~\ref{LeftContinueRight}, in order to construct an admissible decomposition of  $J^*_{2m}$, where $m=\ell+4k+r$, it suffices to construct a pair of complementary left and right caps satisfying the hypothesis of Lemma~\ref{ComplementaryCaps} and a centre piece of length four which is complementary to these caps. We now do this for some particular 2-regular digraphs. 

Recall that each of our caps will have external pattern

\begin{multline*}
\mathcal{X}=(
\{y_1\}, \{x_0,x_1,y_1\}, \{x_1,y_0,y_1\}, \{x_0,x_1,y_0,y_1\}, \{y_1\}, \{x_0,x_1,y_0\}, \{x_1,y_1\}, \\ \{x_1\}, \{x_0,x_1,y_0,y_1\}
).
\end{multline*}

\begin{lemma} \label{GeneralFactors}
Let $F$ be a $2$-regular digraph of order $2m$.  There exists an admissible $F$-decomposition of $J_{2m}^*$ with external pattern $\mathcal{X}$ in each of the following cases:
     \begin{multicols}{2}
    \begin{enumerate}
        \item $F=[2s]$, $s \geq 4$;
        \item $F=[2s,2]$, $s \geq 4$;
        \item $F=[2s,2^2]$, $s \geq 4$;
        \item $F=[2s,4]$, $s \geq 5$.
    \end{enumerate}
    \end{multicols}
\end{lemma}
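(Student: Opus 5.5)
The plan is to deduce every case from Lemma~\ref{LeftContinueRight}. For each of the four families we write $2s=m_0+8k$ with $k\ge 0$ and $m_0$ ranging over a set of four consecutive even ``base'' lengths covering all residues modulo $8$. For cases 1--3 we take $m_0\in\{8,10,12,14\}$; for case 4 we take $m_0\in\{10,12,14,16\}$. The hypothesis $s\ge 4$ (resp.\ $s\ge 5$) is exactly what makes this possible. For each base length we then need four ingredients:
\begin{enumerate}
\item an $(\mathcal{X},\ell)$-left cap $\mathcal{L}$;
\item an $(\mathcal{X},r,t,M)$-right cap $\mathcal{R}$ with $M=\emptyset$, $[2]$, $[2,2]$ or $[4]$ according to the case;
\item agreement of $\mathcal{L}$ and $\mathcal{R}$ in internal pattern;
\item the length condition $\len(L_i)+\len(P_i)=m_0$ for all nine $i$.
\end{enumerate}
We also need a single centre piece $\mathcal{C}$ of length $4$ complementary to all these caps. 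Lemma~\ref{LeftContinueRight} then yields an admissible $[m_0+8k]\cup M$-decomposition of $J^*_{2(\ell+4k+r)}$ with external pattern $\mathcal{X}$. The order is automatically right: $2(\ell+r)=m_0+|M|$ by counting vertices (each of the $2(\ell+r)$ vertices of the glued admissible factor is used once), so $2m=2s+|M|$.

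To keep the number of explicit objects small, I would fix one left cap $\mathcal{L}$, say with $\ell=2$ as in Example~\ref{ex:capsplice}. All nine of its paths would have prescribed internal patterns, chosen once and for all. I would likewise fix one centre piece $\mathcal{C}$ of length $4$ with $\mathcal{I}_{\mathcal{C}}=\mathcal{I}_{\mathcal{L}}$; the pair $(Q,U)$ of Figure~\ref{fig:dipathsC} serves as a model for one of its nine components. All variation would then be pushed into the right caps. For each required pair $(m_0,M)$ (sixteen in total) I would exhibit an explicit right cap $\mathcal{R}$ on $J^*_{2r}\oplus\{x_0y_0\}$ with $\mathcal{I}_{\mathcal{R}}=\mathcal{I}_{\mathcal{L}}$, where $r=(m_0+|M|)/2-\ell$. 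Each $R_i$ is a path $P_i$ of length $m_0-\len(L_i)$ together with the small cycles of $M$, placed in the last columns. These right caps would be listed in tables or an appendix.

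Verification for each object is the routine check of Definitions~\ref{capdef} and~\ref{continuedef}:
\begin{itemize}
\item arc-disjointness, and that the arcs exhaust $A(J^*_{2r})\cup\{x_0y_0\}$ (respectively $A(J^*_{2\ell})\setminus\{x_\ell y_\ell\}$);
\item correct saturation of the middle columns;
\item the boundary conditions (b) and (d);
\item the equality of the internal patterns $(t(P_i),s(P_i),\bar S_i)$ with those of $\mathcal{L}$.
\end{itemize}
An arc count helps guide the search. $J^*_{2r}\oplus\{x_0y_0\}$ has $16r-14$ arcs, hence $8r+\ldots$ must split as nine path-plus-cycle pieces of total arc count $9(m_0+|M|)-\sum\len(L_i)$. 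Here $\sum_i\len(L_i)=|A(J^*_{2\ell})|-1$. This pins down the lengths and shows the small cycles must be placed consistently.

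The main obstacle is actually constructing the sixteen right caps and the centre piece. Each is a decomposition of a dense small digraph into nine pieces with rigid endpoint and boundary constraints, and it is essentially a computer search. I expect the hardest cases to be the smallest ones. These are $m_0=8$ with $M=[2,2]$, and the $[2s,4]$ family at $m_0=10$. There $r$ is small, so the boundary constraints from the external pattern $\mathcal{X}$ (where vertex sets $E_i$ of size $1$ force long detours) conflict most. If a fixed $\ell=2$ left cap fails for some case, I would allow a second left cap with $\ell=3$ for that residue class, accepting a different (still complementary) centre piece. Failing that, I would treat the offending smallest order directly by an explicit admissible decomposition and start the $8k$-recursion one step later.
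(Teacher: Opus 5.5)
Your plan is the same as the paper's proof. The paper uses one $(\mathcal{X},2)$-left cap and one centre piece of length $4$ complementary to it. For each family it takes right caps at the four smallest values of $s$: $m_0\in\{8,10,12,14\}$ for $[2s]$, $[2s,2]$, $[2s,2^2]$, and $m_0\in\{10,12,14,16\}$ for $[2s,4]$. All of these share the left cap's internal pattern, and Lemma~\ref{LeftContinueRight} finishes the argument. Your bookkeeping matches the paper's choice of right caps on $J^*_{2r}\oplus\{x_0y_0\}$ with $r=s_0-2,\,s_0-1,\,s_0,\,s_0$ in the four cases: $r=(m_0+|M|)/2-\ell$, $\len(P_i)=m_0-\len(L_i)$, and coverage of all residues mod $8$.

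However, the proposal does not prove the lemma, because it defers everything that is actually at stake. Lemma~\ref{LeftContinueRight} is already established, so the reduction is the routine part. The content of Lemma~\ref{GeneralFactors} is the existence of three kinds of finite object:
\begin{enumerate}
\item one left cap: nine dipaths decomposing $A(J_4^*)\setminus\{x_2y_2\}$, with external pattern $\mathcal{X}$ and the end-vertex conditions of Definition~\ref{capdef};
\item one centre piece satisfying Definition~\ref{continuedef} with the same internal pattern;
\item sixteen right caps with matching internal patterns and the correct path lengths.
\end{enumerate}
You give no argument that any of these exist. You describe them as the expected output of a computer search and explicitly allow for the search failing, so the conclusion is not justified.

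The paper closes exactly this step by writing the objects down. It lists $L_1,\dots,L_9$, of lengths $2,4,4,7,2,5,3,2,6$ summing to $|A(J_4^*)|-1=35$. It lists $C_1,\dots,C_9$, the right caps for $[2s]$ with $s_0=4,5,6,7$, and the remaining twelve right caps in Appendix~\ref{A:continue}. Each object is then checked against Definitions~\ref{capdef} and~\ref{continuedef}. To make your plan a proof, you must supply such objects, or some genuine existence argument for them, and verify those conditions.

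A minor slip: $J_{2r}$ has $9r$ edges, so $J^*_{2r}\oplus\{x_0y_0\}$ has $18r+1$ arcs, not $16r-14$. For $r=2$, the paper's right cap for $[8]$ indeed uses $37$ arcs. With $\sum_i\len(L_i)=18\ell-1$, this equals your expression $9(m_0+|M|)-\sum_i\len(L_i)$ identically once $r$ is chosen as above. The arc count is therefore an automatic consistency check, not an extra constraint that pins anything down.
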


\begin{proof}
We give left and right caps and a centre piece of length $4$ that satisfy the conditions of Lemma~\ref{LeftContinueRight}.  In each case, we use the same $(\mathcal{X},2)$-left cap$\mathcal{L}=(L_1, \ldots, L_9)$ and centre piece $\mathcal{C}=(C_1, \ldots, C_9)$.  The construction for $F=[2s,4]$, including the left cap $\mathcal{L}$ and centre piece $\mathcal{C}$, as well as the right cap for this case, is illustrated in Appendix~\ref{Appendix:Cap_and_Continue_Piece_Picture}.
    
\noindent Each digraph in $\mathcal{L}$ consists of a single path, given below:
\[
\begin{array}{rlrl}
L_1: & \langle y_2,y_1,x_2\rangle; & L_6: & \langle y_2,y_0,x_2,x_0,x_1,y_3\rangle; \\
L_2: & \langle y_2,x_0,y_1,x_1,x_3\rangle; & L_7: & \langle y_3,x_1,y_1,x_3 \rangle; \\
L_3: & \langle x_3,y_1,y_0,x_1,y_2\rangle; & L_8: & \langle y_2,x_1,x_2\rangle; \\
L_4: & \langle x_3,x_1,x_0,y_2,x_2,y_0,y_1,y_3\rangle ; &L_9: & \langle y_3,y_1,x_0,x_2,x_1,y_0,y_2\rangle. \\
L_5: & \langle x_2,y_1,y_2\rangle; \\
\end{array}
\]

Each digraph $C_i=(P_i,Q_i)$ in $\mathcal{C}$ consists of two vertex-disjoint paths $P_i$, which has source in $\{x_0,x_1,y_0,y_1\}$ and terminal in $\{x_4,x_5,y_4,y_5\}$, and $Q_i$, which has source in $\{x_4,x_5,y_4,y_5\}$ and terminal in $\{x_0,x_1,y_0,y_1\}$. These are listed as follows:

\[
\begin{array}{rll rll}
C_1: & (\langle x_0,y_2,x_2,x_1,y_3,x_4\rangle, \langle y_4,x_3,y_1,y_0\rangle); & C_6: & (\langle y_1,x_1,x_3,y_3,y_5\rangle, \langle y_4,y_2,x_4, x_2,y_0\rangle);  \\
C_2: & (\langle x_1,x_0,x_2,x_3,x_5\rangle, \langle y_4,y_3,y_1,y_2,y_0\rangle); & C_7: & (\langle x_1,x_2,y_3,x_5\rangle, \langle y_5,x_3,y_2,x_0,y_0,y_1\rangle);  \\
C_3: & (\langle y_0,y_2,y_1,y_3,y_4\rangle, \langle 
x_5,x_3,x_2,x_0,x_1\rangle); & C_8: & (\langle x_0,y_1,x_3,x_4\rangle, \langle y_4,x_2,y_2,y_3,x_1,y_0 \rangle); \\
C_4: & (\langle y_1,x_2,y_4,x_4,x_3,y_5\rangle, \langle x_5,y_3,y_2,x_1\rangle); & C_9: & (\langle y_0,x_2,x_4, y_2, y_4\rangle, \langle y_5,y_3,x_3,x_1,y_1\rangle).\\
C_5: & (\langle y_0,x_1,y_2,x_3,y_4\rangle, \langle x_4,y_3,x_2,y_1,x_0\rangle); \\
\end{array}
\]

For each $F \in \{[2s],[2s,2],[2s,2^2],[2s,4]$, we give a right cap for the smallest value of $s$, say $s_0$, in each congruence class modulo $4$ so that the sum of the path lengths in $L_i$ and $R_i$ sum to $2s_0$; the result for larger $s$ then follows by Lemma~\ref{LeftContinueRight}.  

We first consider $F=[2s]$.  An $(\mathcal{X},s_0-2)$-right cap for each case is as follows:
\begin{itemize}
\item $s_0=4$:
\[
\begin{array}{rlrl}
R_1: & \langle x_0,y_1,x_3,x_1,x_2,y_2,y_0 \rangle; & R_6: & \langle y_1, y_3, x_1, y_0 \rangle; \\
R_2: & \langle x_1,x_0,y_2,y_1,y_0 \rangle; &R_7: & \langle x_1, y_2, x_0, y_0, x_2, y_1 \rangle; \\
R_3: & \langle y_0, y_1, x_0, x_2, x_1 \rangle; & R_8: & \langle x_0, x_1, y_3, y_1, y_2, x_2, y_0 \rangle; \\
R_4: & \langle y_1, x_1 \rangle; & R_9: & \langle y_0, x_1, y_1 \rangle. \\
R_5: & \langle y_0, y_2, x_1, x_3, y_1, x_2, x_0 \rangle; \\
\end{array}
\]
\item $s_0=5$: 
\[
\begin{array}{rl rl}
R_1: & \langle x_0, x_1, y_1, x_3, y_3, x_2, x_4, y_2, y_0 \rangle; & R_6: & \langle y_1, x_2, y_4, y_2, x_1, y_0 \rangle; \\
R_2: & \langle x_1, y_2, y_3, y_1, x_0, x_2, y_0 \rangle; & R_7: & \langle x_1, y_3, y_2, x_3, x_2, x_0, y_0, y_1 \rangle; \\
R_3: & \langle y_0, x_2, x_3, y_2, x_0, y_1, x_1 \rangle; & R_8: & \langle x_0, y_2, y_4, x_2, y_3, x_1, x_3, y_1, y_0 \rangle; \\
R_4: & \langle y_1, y_2, x_2, x_1 \rangle; & R_9: & \langle y_0, x_1, x_2, y_2, y_1 \rangle.  \\
R_5: & \langle y_0, y_2, x_4, x_2, y_1, y_3, x_3, x_1, x_0 \rangle; \\
\end{array}
\]
\item $s_0=6:$
\[
\begin{array}{rlrl}
R_1: & \langle x_0, x_1, y_1, x_2, x_3, x_5, y_3, x_4, y_4, y_2, y_0 \rangle; & R_6: & \langle y_1, x_3, y_5, y_3, x_2, y_2, x_1, y_0 \rangle; \\
R_2: & \langle x_1, x_0, y_1, y_2, y_3, x_3, y_4, x_2, y_0 \rangle; & R_7: & \langle x_1, y_3, y_4, x_3, x_4, y_2, x_2, x_0, y_0, y_1 \rangle; \\
R_3: & \langle y_0, y_2, x_0, x_2, x_4, x_3, y_3, y_1, x_1 \rangle; & R_8: & \langle x_0, y_2, y_4, x_4, y_3, y_5, x_3, x_1, x_2, y_1, y_0 \rangle; \\
R_4: & \langle y_1, y_3, y_2, x_3, x_2, x_1 \rangle; & R_9: & \langle y_0, x_2, y_3, x_1, x_3, y_2, y_1 \rangle. \\
R_5: & \langle y_0, x_1, y_2, x_4, x_2, y_4, y_3, x_5, x_3, y_1, x_0 \rangle; \\
\end{array}
\]
\item $s_0=7$: 
\[
\begin{array}{rl}
R_1: &  \langle x_{0}, x_{1}, y_{1}, x_{2}, x_{3}, y_{3}, x_{5}, y_{5}, x_{4}, x_{6}, y_{4}, y_{2}, y_{0}\rangle; \\
R_2: & \langle x_{1}, x_{0}, y_{1}, y_{2}, x_{3}, x_{4}, y_{3}, y_{5}, y_{4}, x_{2}, y_{0}\rangle; \\
R_3: & \langle y_{0}, y_{1}, x_{0}, x_{2}, y_{2}, y_{3}, x_{4}, x_{5}, y_{4}, x_{3}, x_{1}\rangle; \\
R_4: & \langle y_{1}, x_{3}, y_{4}, x_{4}, y_{2}, x_{2}, y_{3}, x_{1}\rangle; \\
R_5: & \langle y_{0}, x_{2}, y_{4}, x_{6}, x_{4}, y_{5}, x_{5}, y_{3}, x_{3}, y_{1}, x_{1}, y_{2}, x_{0}\rangle; \\
R_6: & \langle y_{1}, y_{3}, y_{2}, y_{4}, y_{6}, x_{4}, x_{3}, x_{2}, x_{1}, y_{0}\rangle;  \\
R_7: & \langle x_{1}, y_{3}, y_{4}, y_{5}, x_{3}, x_{5}, x_{4}, x_{2}, x_{0}, y_{0}, y_{2}, y_{1}\rangle;  \\
R_8: & \langle x_{0}, y_{2}, x_{1}, x_{2}, x_{4}, y_{6}, y_{4}, x_{5}, x_{3}, y_{5}, y_{3}, y_{1}, y_{0}\rangle;  \\
R_9: & \langle y_{0}, x_{1}, x_{3}, y_{2}, x_{4}, y_{4}, y_{3}, x_{2}, y_{1}\rangle. \\
\end{array}
\]
\end{itemize}
See Appendix \ref{A:continue} for the remaining cases. \end{proof}

\begin{lemma} \label{SmallFactors}
Let $m \in \{3, 4, 5, 6\}$. There exists an admissible $F$-decomposition of $J_{2m}^*$ with external pattern $\mathcal{X}$ for each of the following $2$-regular digraphs $F$ of order $2m$:
    \[
    [2,4], [2,6], [2^2,4], [2^2,6], [2^2,4^2], [2^3], [2^3,4], [4^2], [4^3], [4,6], [4,8], [6].
    \]
\end{lemma}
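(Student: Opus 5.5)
The plan is direct construction: for each of the twelve listed 2-regular digraphs $F$ we will write down nine admissible $2$-regular subdigraphs $F_1,\ldots,F_9$ of $J_{2m}^*$, where $2m$ is the order of $F$, with external pattern $\mathcal{X}$. The orders are $6$ for $[2^3]$ and $[6]$; $8$ for $[2^2,4]$ and $[4^2]$; $10$ for $[2,4^2]$-type cases, namely $[2^2,6]$, $[4,6]$ and $[2^3,4]$; and $12$ for $[2^2,4^2]$, $[4^3]$ and $[4,8]$, together with $[2,4]$ and $[2,6]$ of orders $6$ and $8$. So $m\in\{3,4,5,6\}$ as stated.

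Since $J_{2m}^*$ has only $4(2m-1)+\ldots$ arcs (in fact exactly $9\cdot 2m$ arcs, as each of the nine factors has $2m$ arcs), the search space is small. We would produce each decomposition by computer search and then list the cycles explicitly, as in Lemma~\ref{GeneralFactors} and the appendix. Where convenient, we can shorten the work with the tools already available. If two of the required factors are built from compatible pieces, Lemma~\ref{SpliceDecomps} gives the combination. For example, an admissible $[2,4]$-decomposition of $J_6^*$ spliced with a $[2,4]$- or $[4]$-type piece would give larger ones. However, $[4]$ and $[2]$ alone on $J_4^*$ or $J_2^*$ are not available, and the pattern $\mathcal{X}$ forces $E_i$ of various sizes. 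So for most cases direct listing is simpler.

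Verification for each listed family is mechanical, and we would carry it out in the following order:
\begin{enumerate}
\item Each $F_i$ is a vertex-disjoint union of directed cycles of the prescribed lengths.
\item $V(F_i)\cap\{x_0,x_1,y_0,y_1\}$ equals the $i$th entry of $\mathcal{X}$.
\item For $j\in\{0,1\}$, $x_{m+j}\in V(F_i)$ if and only if $x_j\notin V(F_i)$, and likewise for $y$, so that $F_i$ is admissible. All of $x_2,\dots,y_{m-1}$ must be covered.
\item The nine arc sets partition $A(J_{2m}^*)$.
\end{enumerate}

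The main obstacle is finding the decompositions at all in the tightest cases. These are the small orders, where the admissibility constraints near both ends overlap, and the cases with many $2$-cycles. The $2$-cycles must use opposite-arc pairs, and there are few of these among the end vertices. We would first attempt a randomized backtracking search that fixes $F_i$ in order, choosing the factors with small external patterns ($\{y_1\}$, $\{x_1\}$) first, since they are most constrained. If some case proves infeasible, we would fall back on splicing it from a smaller compatible case plus a new piece.
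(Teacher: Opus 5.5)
Your strategy is the paper's own: for each of the twelve $F$, write down nine admissible $2$-regular subdigraphs of $J_{2m}^*$ with external pattern $\mathcal{X}$ and check your conditions 1--4 mechanically. The paper's proof does exactly this by pointing to its appendix, where the decompositions are listed and the $[4,8]$ case is also drawn. The gap is that you never produce them. The lemma is a finite existence statement, and its entire content is the $12\times 9$ explicit subdigraphs. Describing a backtracking search that would find them, together with the hedge ``if some case proves infeasible'', does not establish that any of the twelve decompositions exists. (Your arc count is also circular. $|A(J_{2m}^*)|=18m$ should be read off from the definition, which gives $m$ vertical edges plus $8m$ others, not from factors you have not yet built.)

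The fallback cannot close the gap either. Lemma~\ref{SpliceDecomps} needs compatible admissible decompositions with pattern $\mathcal{X}$ of smaller order, and none exists on $J_2^*$ or $J_4^*$:
\begin{itemize}
\item For $m=1$, a subdigraph of order $2$ cannot contain the four vertices of the fourth entry of $\mathcal{X}$.
\item For $m=2$, the sixth factor, with pattern $\{x_0,x_1,y_0\}$, must have vertex set $\{x_0,x_1,y_0,y_3\}$. In $J_4$ the vertex $y_3$ is adjacent only to $x_1$ and $y_1$, so it must lie on the $2$-cycle $(x_1,y_3)$. That leaves $\{x_0,y_0\}$ to form a $2$-cycle, but $\{x_0,y_0\}$ is not an edge of $J_4$.
\end{itemize}
So every splice piece has order at least $6$. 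The only listed case reachable by splicing is then $[2^2,4^2]$, obtained from two copies of a $[2,4]$-decomposition of $J_6^*$; that is a legitimate shortcut you half-noticed. The other eleven cases admit no reduction. This includes the order-$6$ base cases $[2,4]$, $[2^3]$, $[6]$, and those containing a cycle too long to split, such as $[2,6]$, $[4,6]$, $[4,8]$. These must be exhibited and verified explicitly, as the paper does. Until you supply those lists, the lemma is not proved.
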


\begin{proof}
The required decompositions may be found in Appendix~\ref{Small 12-decomps}.  A $[4,8]$-decomposition of $J_{12}^*$ is also illustrated in Appendix~\ref{[4,8]}.
\end{proof}

Now that we have established the existence of admissible $F$-decompositions of $J_{2m}^*$ for cases where $F$ has one component or a small number of components with small cycle lengths, we may employ these results with Lemma~\ref{SpliceDecomps} to produce decompositions for more general $2$-regular graphs $F$.  This approach will then allow us to construct directed $2$-factorizations of $W_{2m}^*$.

\begin{theorem}\label{J-decomposition}
Let $m \geq 4$ and $F \neq [2^m]$ be a bipartite $2$-regular of order $2m$. There exists an admissible $F$-decomposition of $J_{2m}^*$. 
\end{theorem}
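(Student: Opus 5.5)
The plan is to partition the cycles of $F$ into blocks, each of which is one of the 2-regular digraphs covered by Lemma~\ref{GeneralFactors} or Lemma~\ref{SmallFactors}. Each block $B$ of order $2m_B$ with $m_B\ge 3$ then has an admissible $B$-decomposition of $J_{2m_B}^*$ with external pattern $\mathcal{X}$. All these decompositions share the pattern $\mathcal{X}$, so they are pairwise compatible. Repeated application of Lemma~\ref{SpliceDecomps}, by induction on the number of blocks, glues them into an admissible $F$-decomposition of $J_{2m}^*$, again with external pattern $\mathcal{X}$. The proof therefore reduces to a purely combinatorial statement: every bipartite multiset $F$ of even cycle lengths with total $2m\ge 8$ and $F\neq[2^m]$ can be partitioned into blocks from the list
\[
\mathcal{B}=\{[2s],[2s,2],[2s,2^2]\ (s\ge4),\ [2s,4]\ (s\ge5)\}\cup\{[2,4],[2,6],[2^2,4],[2^2,6],[2^2,4^2],[2^3],[2^3,4],[4^2],[4^3],[4,6],[4,8],[6]\}.
\]

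Write $F=[2^a,4^b,6^c,\ell_1,\ldots,\ell_d]$ with each $\ell_i\ge 8$. I would dispose of the long cycles first.

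1. Each long cycle beyond the first stands alone as a block $[\ell_i]$.
2. The first long cycle, if one exists, absorbs up to two $2$-cycles via $[2s,2]$ or $[2s,2^2]$.
3. If no $2$-cycles are left to absorb, a long cycle can also be used alone.

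This leaves a residual multiset $[2^{a'},4^b,6^c]$ that must be partitioned into small blocks.

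The $6$-cycles are handled by $[6]$, $[2,6]$ or $[2^2,6]$, so each $6$-cycle can absorb up to two $2$-cycles. The $4$-cycles are handled by $[4^2]$ and $[4^3]$, which cover every $b\ge2$, or by $[2,4]$, $[2^2,4]$, $[2^3,4]$ and $[2^2,4^2]$, which absorb $2$-cycles. If $a'\ge3$, groups of $[2^3]$ can absorb $2$-cycles in multiples of three. The remaining $2$-cycles, $0$, $1$ or $2$ of them, must then be attached to some $4$-, $6$- or long cycle.

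The main obstacle is this residual case analysis. I have to check that the leftover $2$-cycles can always be attached and that no leftover block has forbidden order. The troublesome configurations are those where few non-$2$ cycles are present, together with lone cycles that are not in the list.

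A lone $4$-cycle is not a block, but it is saved by attaching $2$-cycles or another $4$-cycle. If $F=[4,2^a]$ with $a\ge1$, I would use $[2^3]$ blocks together with one of $[2,4]$, $[2^2,4]$ or $[2^3,4]$, matching $a \bmod 3$. If $F=[4,\ell]$ with $\ell\ge10$, I would use $[2s,4]$. The case $F=[4,8]$ is a listed block. A lone $2$-cycle with only $6$'s or long cycles is absorbed by one of them.

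The case where all $2$-cycles are exhausted is easy. The exceptional case $F=[2^m]$ is excluded, and $[2^a]$ with $3\nmid a$ fails, which is exactly why it is excluded.

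I expect a careful enumeration by $(a \bmod 3, b, c, d)$ to close every case with $m\ge4$. In particular, $[2s,4]$ with $s\ge5$ handles $4$-cycles paired with long cycles, and $[4,8]$ handles the remaining small one.
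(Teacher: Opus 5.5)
Your strategy is the paper's: cut $F$ into blocks supplied by Lemmas~\ref{GeneralFactors} and~\ref{SmallFactors}, all with external pattern $\mathcal{X}$, and splice them with Lemma~\ref{SpliceDecomps}. That reduction is sound. The gap is that the combinatorial statement you reduce to is false, so the enumeration you ``expect'' to close every case cannot be completed. Take $F=[2,4^2]$. It has order $10$, so $m=5\ge 4$, and $F\neq[2^5]$. Its only partitions are $[2,4^2]$, $[2,4]+[4]$, $[2]+[4^2]$ and $[2]+[4]+[4]$. Each contains a piece ($[2,4^2]$, $[4]$ or $[2]$) that is not in $\mathcal{B}$, so no splicing of the available ingredients produces an admissible $[2,4^2]$-decomposition of $J_{10}^*$. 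The paper's written proof has the same hole: Sub-case~3.2 with $\alpha_1=1$, $m_2=4$, $\alpha_2=2$ splices $[2,4]$ with a lone $[4]$, which neither Case~1 nor Case~2 supplies. To close it you need a new ingredient, namely an explicit admissible $[2,4^2]$-decomposition of $J_{10}^*$. Otherwise the statement must exclude this factor. That restriction is harmless for Theorem~\ref{thm:main}, which only invokes $m\ge 7$, and order $10$ is covered by Theorem~\ref{SmallOrders}.

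Every other factor does split, but your greedy order has to change. Letting the first long cycle absorb two $2$-cycles before reducing $a$ modulo $3$ can strand a lone $2$-cycle, e.g.\ $[2^3,8]\mapsto[8,2^2]+[2]$. Attaching to the wrong host can strand a $4$-cycle, e.g.\ $[2,4,6]$ must be $[2,4]+[6]$, not $[2,6]+[4]$. A correct scheme runs as follows. Let $r\in\{0,1,2\}$ with $r\equiv a \pmod 3$, and first remove $(a-r)/3$ blocks $[2^3]$. If $b=1$ and $r\ge1$, put the remaining $r$ two-cycles on the $4$-cycle via $[2,4]$ or $[2^2,4]$. If $b=1$ and $r=0$, pair the $4$-cycle with a $6$- or long cycle via $[4,6]$, $[4,8]$ or $[2s,4]$; if there is none, merge it with one $[2^3]$ into $[2^3,4]$. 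If $b\neq 1$ and $c+d\ge 1$, put the $r$ two-cycles on one $6$- or long cycle and cover the $4$-cycles (if any) by $[4^2]$ and $[4^3]$. If $b\ge 2$ and $c=d=0$, there are three subcases. For $r=0$, use $[4^2]$ and $[4^3]$ directly. For $r=2$, use $[2^2,4^2]$ when $b=2$, and $[2^2,4]$ plus blocks covering the other $b-1\ge 2$ four-cycles when $b\ge3$. For $r=1$, use $[2,4]$ plus such blocks when $b\ge 3$; when $b=2$, replace one $[2^3]$ and both $4$-cycles by $[2^2,4]+[2^2,4]$. This last move needs $a\ge 4$, and its failure at $a=1$ is exactly $F=[2,4^2]$. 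The paper's text also glosses over $[2^{3q},4]$, $[2^{3q+2},4^2]$ and $[2^{3q+1},4^2]$ with $q\ge1$, and those cases are repaired in exactly this way.
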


\begin{proof}
Write $F=[m_1^{\alpha_1}, m_2^{\alpha_2}, \ldots, m_t^{\alpha_t}]$ where $2 \leq m_1 \leq m_2 \leq \cdots \leq m_t$ and $\alpha_1, \ldots, \alpha_t$ are positive integers. We consider three cases depending on the value of $m_1$.
\vskip 1ex

\noindent CASE 1: $m_1 \geq 6$. For each $i \in \{1, \ldots, t\}$, there exists an admissible $[m_i]$-decomposition of $J_{2m_i}^*$ with external  pattern $\mathcal{X}$, by Lemma~\ref{SmallFactors} if $m_i=6$ and by Lemma~\ref{GeneralFactors} otherwise.  Thus, by Lemma~\ref{SpliceDecomps}, there exists an admissible $F$-decomposition of $J_{2m}^*$ with external  pattern $\mathcal{X}$.
\vskip 1ex

\noindent CASE 2: $m_1=4$. We consider cases depending on the value of $\alpha_1$.  
\vskip 1ex

\noindent\underline{Sub-case 2.1:} $\alpha_1 \geq 2$. Write $\alpha_1 = 2\beta+3\gamma$, where $\beta$ and $\gamma$ are non-negative integers.  By Lemma~\ref{SmallFactors}, there exist an admissible $[4^2]$-decomposition of $J_8^*$ and an admissible $[4^3]$-decompositions of $J_{12}^*$, each with external  pattern $\mathcal{X}$.  The existence of an admissible $[4^{\alpha_1}]$-decomposition of $J_{4\alpha_1}^*$ follows by Lemma~\ref{SpliceDecomps}.  Using Case 1, there exists an admissible $[m_2^{\alpha_2}, \ldots, m_t^{\alpha_t}]$-decomposition of $J_{2m-4\alpha_1}^*$.  These two latter decompositions are compatible, each having external  pattern $\mathcal{X}$, and hence, by Lemma~\ref{SpliceDecomps}, there is an admissible $[m_1^{\alpha_1}, \ldots, m_t^{\alpha_t}]$-decomposition of $J_{2m}^*$ with external  pattern $\mathcal{X}$. 
\vskip 1ex

\noindent\underline{Sub-case 2.2:}  $\alpha_1=1$. Note that there is an admissible $[4,m_2]$-decomposition of $J_{4+m_2}^*$ by Lemma~\ref{GeneralFactors} in the case $m_2 > 6$ and Lemma~\ref{SmallFactors} if $m_2=6$.  By Case 1, there is an admissible $[m_2^{\alpha_2-1}, m_3^{\alpha_3}, \ldots, m_t^{\alpha_t}]$-decomposition of $J_{2m-(4+m_2)}^*$.  As both of these decompositions have external  pattern $\mathcal{X}$, Lemma~\ref{SpliceDecomps} implies the existence of an admissible $[m_1^{\alpha_1}, \ldots, m_t^{\alpha_t}]$-decomposition of $J_{2m}^*$ with external  pattern $\mathcal{X}$.
\vskip 1ex

\noindent CASE 3: $m_1=2$. Since $\mathcal{F} \neq [2^m]$, we have that $t \geq 2$.  We proceed depending on the congruence class of $\alpha_1$ modulo $3$.
\vskip 1ex

\noindent \underline{Sub-case 3.1:} $\alpha_1 \equiv 0 \pmod{3}$. The result follows by Lemma~\ref{SpliceDecomps}, using the $[2^3]$-decomposition from Lemma~\ref{SmallFactors} and an admissible $[m_2^{\alpha_2}, \ldots, m_t^{\alpha_t}]$-decomposition of $J_{2m-2\alpha_1}^*$ from Case 1 or 2.
\vskip 1ex

\noindent\underline{Sub-case 3.2:} $\alpha_1 \equiv 1 \pmod{3}$. We again use Lemma~\ref{SpliceDecomps}, this time with an admissible $[2,m_2]$-decomposition of $J_{2+m_2}^*$ from Lemma~\ref{GeneralFactors} (if $m_2 \geq 8$) or Lemma~\ref{SmallFactors} (if $m_2 \in \{4,6\}$), as well as an admissible $[2^3]$-decomposition from Lemma~\ref{SmallFactors} (if $\alpha_1>1$) and an $[m_2^{\alpha_2-1}, m_3^{\alpha_3}, \cdots m_t^{\alpha_t}]$-decomposition of $J_{2m-2\alpha_1-m_2}^*$ from Case 1 or 2.
\vskip 1ex

\noindent \underline{Sub-case 3.3:} $\alpha_1 \equiv 2 \pmod{3}$. This case is similar to Subcase 3.2, except that we use a $[2^2,m_2]$-decomposition of $J_{4+m_2}^*$ from Lemma~\ref{GeneralFactors} or~\ref{SmallFactors} in place of a $[2,m_2]$-decomposition. \end{proof}

\begin{corollary}\label{W-decomposition}
Let $m \geq 4$ be an integer and  $F \neq [2^m]$ is a bipartite directed $2$-factor of order $2m$. There exists an $F$-factorization of $W_{2m}^*$. 
\end{corollary}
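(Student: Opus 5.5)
This corollary follows by chaining Theorem~\ref{J-decomposition} with Lemma~\ref{JtoW}. Let $m \geq 4$ and let $F \neq [2^m]$ be a bipartite directed $2$-factor of order $2m$. Its underlying cycle type is a bipartite $2$-regular digraph of order $2m$, with every cycle of even length and $F \neq [2^m]$. Theorem~\ref{J-decomposition} applies directly and gives an admissible $F$-decomposition of $J_{2m}^*$. Lemma~\ref{JtoW} then converts this into an $F$-factorization of $W_{2m}^*$.

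One point needs checking: Lemma~\ref{JtoW} is phrased for decompositions with nine parts $F_1,\ldots,F_9$. Its proof, however, only uses the following facts:
\begin{itemize}
\item each part is admissible, so under the identification map $\phi$ it becomes a directed $2$-factor of $W_{2m}^*$;
\item $\phi$ induces a bijection between $A(J_{2m}^*)$ and $A(W_{2m}^*)$.
\end{itemize}
The nine-part count is automatic. $W_{2m}$ is $9$-regular: each vertex is joined to its twin, giving degree $1$, and to both blow-up vertices at each of the four circulant neighbours $\pm1,\pm2$, giving degree $8$. So every directed $2$-factorization of $W_{2m}^*$ has exactly nine factors. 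The decompositions built in Theorem~\ref{J-decomposition} all have external pattern $\mathcal{X}$, which has nine entries, so they match the lemma as stated.

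The only step that deserves care is confirming that $\phi$ really is a bijection on arcs. We must check that no arc of $J_{2m}^*$ becomes a loop or a duplicate arc after identifying indices $0,1$ with $m,m+1$. This holds because $m \geq 4 > 2$, so the circulant differences $\pm1,\pm2$ are distinct and nonzero modulo $m$. We must also check that the vertical edges $\{x_i,y_i\}$, $1 \le i \le m$, map onto all vertical edges of $W_{2m}$ exactly once, with the edge at index $m$ playing the role of index $0$. This is exactly the content already used in the proof of Lemma~\ref{JtoW}, so the corollary follows immediately.
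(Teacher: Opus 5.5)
Your chain of Theorem~\ref{J-decomposition} followed by Lemma~\ref{JtoW} is exactly the paper's one-line proof. However, the verification you add to justify it is false at the boundary value $m=4$.

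The four differences $1,-1,2,-2$ are pairwise distinct and nonzero modulo $m$ only when $m\nmid 2,3,4$, i.e.\ $m\ge 5$. For $m=4$ one has $2\equiv -2 \pmod 4$, so $\mathrm{Circ}(4,\{\pm1,\pm2\})=K_4$ and $W_8=K_4[K_2]=K_8$, which is $7$-regular, not $9$-regular. Accordingly $\phi$ is not a bijection on arcs. For example, the arcs $x_0x_2$ and $x_4x_2$ of $J_8^*$ both map to $x_0x_2$. In general the $32$ arcs of $J_8^*$ joining indices $i$ and $i+2$ are identified in pairs, so $|A(J_8^*)|=72$ while $|A(W_8^*)|=56$. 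Nine admissible $2$-regular subdigraphs of $J_8^*$ can therefore never map to pairwise arc-disjoint factors of $W_8^*$; indeed a factorization of $K_8^*$ has only seven factors. So your argument proves the corollary only for $m\ge 5$. The paper's argument has the same limitation, since Lemma~\ref{JtoW} asserts the one-to-one correspondence of arcs without any hypothesis on $m$.

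The statement is nevertheless true at $m=4$, but it needs a separate argument. Since $W_8^*=K_8^*$, the required $F$-factorizations for $F\in\{[8],[2,6],[4^2],[2^2,4]\}$ are supplied directly by Theorem~\ref{SmallOrders}. Alternatively, state the corollary for $m\ge 5$. This costs nothing for Theorem~\ref{thm:main}, since the corollary is only used through Theorem~\ref{W-reduction} with $m\ge 7$. With that case handled, the rest of your proposal is correct, including the nine-factor count and the treatment of the vertical edge at index $m$.
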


\begin{proof}
    The result follows directly from Theorem~\ref{J-decomposition} by applying Lemma~\ref{JtoW}.
\end{proof}

We now prove the main result of this paper. 
{
\renewcommand{\thetheorem}{\ref{thm:main}}
\begin{theorem}
Let $F$ be a bipartite directed $2$-factor of order $n \equiv 2 \pmod{4}$.  There exists a solution to $\op^*(F)$ if and only if $F \notin \{[4],[6]\}$.
\end{theorem}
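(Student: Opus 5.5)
We prove both directions, splitting on the order $n=2m$ with $m$ odd.

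For necessity, we use Theorem~\ref{Uniform}: there is no solution to $\op^*([m^\alpha])$ when $(m,\alpha)\in\{(4,1),(6,1)\}$. Hence $[4]$ and $[6]$ admit no solution. Note that $[4]$ has order $4\not\equiv 2 \pmod 4$, so only $[6]$ is genuinely relevant to the stated congruence class; it is included in the exceptional set for completeness.

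For sufficiency, let $F\notin\{[4],[6]\}$ be a bipartite directed $2$-factor of order $n=2m$ with $m$ odd. We split according to the size of $n$.

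\emph{Small orders.} If $n\le 17$, then $n\in\{2,6,10,14\}$, and Theorem~\ref{SmallOrders} gives a solution whenever $F\notin\{[4],[6],[3,3]\}$. Since $F$ is bipartite it is never $[3,3]$, so the only exclusion is $[6]$. The case $n=2$, i.e.\ $F=[2]$, is trivial since $K_2^*$ is itself a directed $2$-cycle. Alternatively, $n=2$ falls under $F=[2^m]$, which is handled below.

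\emph{Large orders.} Suppose $n\ge 18$, so $m\ge 9$ and $m$ is odd; in particular $m\ge 7$ as Theorem~\ref{WH} requires. If $F\neq[2^m]$, Corollary~\ref{W-decomposition} gives an $F$-factorization of $W_{2m}^*$, since $m\ge4$. Theorem~\ref{W-reduction} then yields an $F$-factorization of $K_{2m}^*$. The remaining case is $F=[2^m]$, which is excluded from Theorem~\ref{J-decomposition}. Here a solution is given by Theorem~\ref{Uniform} with cycle length $2$ and $\alpha=m$, since $(2,m)$ is not an exception. Concretely, take a $1$-factorization of $K_{2m}$ and replace each edge by a directed $2$-cycle.

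The main obstacle is only bookkeeping: confirming that $[2^m]$ and the small orders are covered by the cited results, and that the bipartite hypothesis removes the $[3,3]$ exception. All the constructive work is already contained in Corollary~\ref{W-decomposition} and Theorem~\ref{W-reduction}.
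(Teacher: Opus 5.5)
Your proof is correct and follows the paper's argument: necessity and the $[2^{n/2}]$ case come from Theorem~\ref{Uniform}, and every other large order is handled by Corollary~\ref{W-decomposition} combined with Theorem~\ref{W-reduction}. The only difference is the small-order cutoff. You use Theorem~\ref{SmallOrders} for $n\le 17$, so $n=14$ is covered there, whereas the paper cites Kadri--\v{S}ajna for $n<14$ and applies the $W_{2m}^*$ reduction from $m=7$ onward; either split works.
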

\addtocounter{theorem}{-1}
}

\begin{proof}
    If $F$ is uniform, then the result follows by Theorem~\ref{Uniform}; in particular, this means that we may assume that $F \neq [2^{n/2}]$.  Note that~\cite[Theorem 9.1]{KadriSajna} solves the problem for orders $n < 14$.  For $n \geq 14$, the result follows from Theorem~\ref{W-reduction} and Corollary~\ref{W-decomposition}.  
\end{proof}

Theorem~\ref{thm:main} completely resolves the case of the directed Oberwolfach problem in which all directed cycles are of even length and the number of vertices is congruent to $2$ modulo $4$.  
The natural next step is to consider the case where the number of vertices is congruent to $0$ modulo $4$. We have embarked on this study; although similar methods may apply, this case appears to be more complicated.

\section{Acknowledgements}
A.C.\ Burgess acknowledges support from the Natural Sciences and Engineering Research Council of Canada,  Discovery Grant RGPIN-2025-04633. 

\noindent P.\ H.\ Danziger acknowledges support from the Natural Sciences and Engineering Research Council of Canada,  Discovery Grant RGPIN-2022-0381. 

\noindent A.\ Lacaze-Masmonteil acknowledges the support of the Pacific Institute for the Mathematical Sciences and Centre National de la Recherche Scientifique Postdoctoral Fellowship. 

\section{Conflict of Interest Statement}

The authors declare no potential conflicts of interest.

\pagebreak

\appendix

\section{An admissible $[4,8]$-decomposition of $J_{12}^*$}
\label{[4,8]}

The following set $\mathcal{F}=\{F_1, \ldots, F_9\}$ of nine digraphs forms an admissible $[4,8]$-decomposition of $J_{12}^*$ with external pattern \[
\mathcal{X}=(
\{y_1\}, \{x_0,x_1,y_1\}, \{x_1,y_0,y_1\}, \{x_0,x_1,y_0,y_1\}, \{y_1\}, \{x_0,x_1,y_0\}, \{x_1,y_1\}, \{x_1\},\{x_0,x_1,y_0,y_1\}).
\]
\medskip

\noindent
$F_1$: External pattern $\{y_1\}$.
\begin{center}
\begin{tikzpicture}[x=1.5cm,y=1.5cm,scale=1]
\foreach \x in {0,...,7} {
	\coordinate (x\x) at (\x,1) {};
	\coordinate (y\x) at (\x,0) {};
}

\begin{scope}[thick, decoration={markings, mark=at position 0.6 with {\arrow{>}}}]
\draw[postaction={decorate}] (y6) -- (x5);
\end{scope}

\begin{scope}[thick, decoration={markings, mark=at position 0.33 with {\arrow{>}}}]
\draw[postaction={decorate}] (y1) -- (x2);
\draw[postaction={decorate}] (x2) -- (y2);
\draw[postaction={decorate}] (y2) -- (x3);
\draw[postaction={decorate}] (x3) -- (y1);
\draw[postaction={decorate}] (y3) -- (x4);
\draw[postaction={decorate}] (x4) -- (y4);
\draw[postaction={decorate}] (y4) -- (x6);
\draw[postaction={decorate}] (x6) -- (y6);
\draw[postaction={decorate}] (x5) .. controls (6, 1.55) .. (x7);
\draw[postaction={decorate}] (x7) -- (y5);
\draw[postaction={decorate}] (y5) .. controls (4, -0.55) .. (y3);
\foreach \x in {0,...,7} {
	\draw[ball color=black] (x\x) circle (3pt);
	\node[above=4pt] () at (x\x) {$x_{\x}$};
	\node[below=4pt] () at (y\x) {$y_{\x}$};
}
\foreach \x in {0,2,3,4,5,6} {
	\draw[ball color=black] (y\x) circle (3pt);
}
\draw[ball color=white] (y1) circle (3pt);
\draw[ball color=white] (y7) circle (3pt);
\end{scope}
\end{tikzpicture}
\end{center}
\medskip

\noindent
$F_2$: External pattern $\{x_0,x_1,y_1\}$.
\begin{center}
\begin{tikzpicture}[x=1.5cm,y=1.5cm,scale=1]
\foreach \x in {0,...,7} {
	\coordinate (x\x) at (\x,1) {};
	\coordinate (y\x) at (\x,0) {};
}
\begin{scope}[thick, decoration={markings, mark=at position 0.33 with {\arrow{>}}}]
\draw[postaction={decorate}] (x0) -- (x1);
\draw[postaction={decorate}] (x1) -- (y1);
\draw[postaction={decorate}] (y1) -- (y2);
\draw[postaction={decorate}] (y2) -- (x0);
\draw[postaction={decorate}] (x2) -- (x3);
\draw[postaction={decorate}] (x3) -- (y3);
\draw[postaction={decorate}] (y3) -- (y4);
\draw[postaction={decorate}] (y4) -- (x5);
\draw[postaction={decorate}] (x5) -- (y5);
\draw[postaction={decorate}] (y5) -- (y6);
\draw[postaction={decorate}] (y6) -- (x4);
\draw[postaction={decorate}] (x4) .. controls (3, 1.55) .. (x2);
\foreach \x in {0,...,7} {
	\node[above=4pt] () at (x\x) {$x_{\x}$};
	\node[below=4pt] () at (y\x) {$y_{\x}$};
}
\foreach \x in {2,3,4,5} {
	\draw[ball color=black] (x\x) circle (3pt);
}
\foreach \x in {0,1,6,7} {
	\draw[ball color=white] (x\x) circle (3pt);
}
\foreach \x in {0,2,3,4,5,6}{
	\draw[ball color=black] (y\x) circle (3pt);
}
\draw[ball color=white] (y7) circle (3pt);
\draw[ball color=white] (y1) circle (3pt);
\end{scope}
\end{tikzpicture}
\end{center}
\medskip

\noindent
$F_3$: External pattern $\{x_1,y_0,y_1\}$.
\begin{center}
\begin{tikzpicture}[x=1.5cm,y=1.5cm,scale=1]
\foreach \x in {0,...,7} {
	\coordinate (x\x) at (\x,1) {};
	\coordinate (y\x) at (\x,0) {};
}

\begin{scope}[thick, decoration={markings, mark=at position 0.6 with {\arrow{>}}}]
\draw[postaction={decorate}] (y3) -- (x5);
\draw[postaction={decorate}] (y4) -- (x3);
\end{scope}

\begin{scope}[thick, decoration={markings, mark=at position 0.33 with {\arrow{>}}}]
\draw[postaction={decorate}] (y0) -- (x1);
\draw[postaction={decorate}] (x1) -- (x2);
\draw[postaction={decorate}] (x2) -- (y1);
\draw[postaction={decorate}] (y1) -- (y0);
\draw[postaction={decorate}] (y2) -- (y3);
\draw[postaction={decorate}] (x5) -- (x4);
\draw[postaction={decorate}] (x4) .. controls (5, 1.55) .. (x6);
\draw[postaction={decorate}] (x6) -- (y5);
\draw[postaction={decorate}] (y5) -- (y4);
\draw[postaction={decorate}] (x3) -- (y2);
\foreach \x in {0,...,7} {
	\node[above=4pt] () at (x\x) {$x_{\x}$};
	\node[below=4pt] () at (y\x) {$y_{\x}$};
}
\foreach \x in {0,2,3,...,6}{
	\draw[ball color=black] (x\x) circle (3pt);
}
\draw[ball color=white] (x1) circle (3pt);
\draw[ball color=white] (x7) circle (3pt);
\foreach \x in {2,...,5}{
	\draw[ball color=black] (y\x) circle (3pt);
}
\foreach \x in {0,1,6,7}{
	\draw[ball color=white] (y\x) circle (3pt);
}
\end{scope}
\end{tikzpicture}
\end{center}
\medskip

\noindent
$F_4$: External pattern $\{x_0,x_1,y_0,y_1\}$.
\begin{center}
\begin{tikzpicture}[x=1.5cm,y=1.5cm,scale=1]
\foreach \x in {0,...,7} {
	\coordinate (x\x) at (\x,1) {};
	\coordinate (y\x) at (\x,0) {};
}

\begin{scope}[thick, decoration={markings, mark=at position 0.6 with {\arrow{>}}}]
\draw[postaction={decorate}] (x2) -- (y0);
\draw[postaction={decorate}] (y1) -- (x0);
\end{scope}

\begin{scope}[thick, decoration={markings, mark=at position 0.67 with {\arrow{>}}}]
\draw[postaction={decorate}] (x1) .. controls (2, 1.55) .. (x3);
\end{scope}

\begin{scope}[thick, decoration={markings, mark=at position 0.33 with {\arrow{>}}}]
\draw[postaction={decorate}] (x0) .. controls (1, 1.55) .. (x2);
\draw[postaction={decorate}] (y0) -- (y1);
\draw[postaction={decorate}] (x3) -- (y5);
\draw[postaction={decorate}] (y5) -- (x4);
\draw[postaction={decorate}] (x4) -- (x5);
\draw[postaction={decorate}] (x5) -- (y4);
\draw[postaction={decorate}] (y4) -- (y3);
\draw[postaction={decorate}] (y3) -- (y2);
\draw[postaction={decorate}] (y2) -- (x1);
\foreach \x in {0,...,7} {
	\node[above=4pt] () at (x\x) {$x_{\x}$};
	\node[below=4pt] () at (y\x) {$y_{\x}$};
}
\foreach \x in {0,1,6,7}{
	\draw[ball color=white] (x\x) circle (3pt);
	\draw[ball color=white] (y\x) circle (3pt);
}
\foreach \x in {2,...,5}{
	\draw[ball color=black] (x\x) circle (3pt);
	\draw[ball color=black] (y\x) circle (3pt);
}
\end{scope}
\end{tikzpicture}
\end{center}
\medskip

\pagebreak
\noindent
$F_5$: External pattern $\{y_1\}$.
\begin{center}
\begin{tikzpicture}[x=1.5cm,y=1.5cm,scale=1]
\foreach \x in {0,...,7} {
	\coordinate (x\x) at (\x,1) {};
	\coordinate (y\x) at (\x,0) {};
}

\begin{scope}[thick, decoration={markings, mark=at position 0.6 with {\arrow{>}}}]
\draw[postaction={decorate}] (x2) -- (y3);
\end{scope}

\begin{scope}[thick, decoration={markings, mark=at position 0.67 with {\arrow{>}}}]
\draw[postaction={decorate}] (y3) .. controls (2, -0.55) .. (y1);
\draw[postaction={decorate}] (y2) .. controls (3, -0.55) .. (y4);
\draw[postaction={decorate}] (x6) .. controls (5, 1.55) .. (x4);
\end{scope}

\begin{scope}[thick, decoration={markings, mark=at position 0.33 with {\arrow{>}}}]
\draw[postaction={decorate}] (y1) -- (x3);
\draw[postaction={decorate}] (x3) -- (x2);
\draw[postaction={decorate}] (y4) .. controls (5, -0.55) .. (y6);
\draw[postaction={decorate}] (y6) -- (y5);
\draw[postaction={decorate}] (y5) -- (x7);
\draw[postaction={decorate}] (x7) .. controls (6, 1.55) .. (x5);
\draw[postaction={decorate}] (x5) -- (x6);
\draw[postaction={decorate}] (x4) -- (y2);
\foreach \x in {0,...,7} {
	\draw[ball color=black] (x\x) circle (3pt);
	\node[above=4pt] () at (x\x) {$x_{\x}$};
	\node[below=4pt] () at (y\x) {$y_{\x}$};
}
\foreach \x in {0,2,3,...,6}{
	\draw[ball color=black] (y\x) circle (3pt);
}
\draw[ball color=white](y1) circle (3pt);
\draw[ball color=white](y7) circle (3pt);
\end{scope}
\end{tikzpicture}
\end{center}
\medskip

\noindent 
$F_6$: External pattern $\{x_0,x_1,y_0\}$.
\begin{center}
\begin{tikzpicture}[x=1.5cm,y=1.5cm,scale=1]
\foreach \x in {0,...,7} {
	\coordinate (x\x) at (\x,1) {};
	\coordinate (y\x) at (\x,0) {};
}
\begin{scope}[thick, decoration={markings, mark=at position 0.33 with {\arrow{>}}}]
\draw[postaction={decorate}] (y3) .. controls (4, -0.55) .. (y5);
\draw[postaction={decorate}] (y5) .. controls (6, -0.55) .. (y7);
\draw[postaction={decorate}] (y7) -- (x5);
\draw[postaction={decorate}] (x5) -- (y3);
\draw[postaction={decorate}] (x0) -- (y2);
\draw[postaction={decorate}] (y2) .. controls (1, -0.55) .. (y0);
\draw[postaction={decorate}] (y0) -- (x2);
\draw[postaction={decorate}] (x2) -- (y4);
\draw[postaction={decorate}] (y4) -- (x4);
\draw[postaction={decorate}] (x4) -- (x3);
\draw[postaction={decorate}] (x3) .. controls (2, 1.55) .. (x1);
\draw[postaction={decorate}] (x1) -- (x0);
\foreach \x in {0,...,7} {
	\node[above=4pt] () at (x\x) {$x_{\x}$};
	\node[below=4pt] () at (y\x) {$y_{\x}$};
}
\foreach \x in {2,...,5}{
	\draw[ball color=black] (x\x) circle (3pt);
}
\foreach \x in {0,1,6,7}{
	\draw[ball color=white] (x\x) circle (3pt);
}
\foreach \x in {1,...,5,7}{
	\draw[ball color=black] (y\x) circle (3pt);
}
\draw[ball color=white] (y0)  circle (3pt);
\draw[ball color=white] (y6)  circle (3pt);
\end{scope}
\end{tikzpicture}
\end{center}
\medskip

\noindent
$F_7$: External pattern $\{x_1,y_1\}$.
\begin{center}
\begin{tikzpicture}[x=1.5cm,y=1.5cm,scale=1]
\foreach \x in {0,...,7} {
	\coordinate (x\x) at (\x,1) {};
	\coordinate (y\x) at (\x,0) {};
}

\begin{scope}[thick, decoration={markings, mark=at position 0.6 with {\arrow{>}}}]
\draw[postaction={decorate}] (x6) -- (y4);
\draw[postaction={decorate}] (x5) -- (y6);
\draw[postaction={decorate}] (y5) -- (x3);
\end{scope}

\begin{scope}[thick, decoration={markings, mark=at position 0.67 with {\arrow{>}}}]
\draw[postaction={decorate}] (x3) .. controls (4, 1.55) .. (x5);
\end{scope}

\begin{scope}[thick, decoration={markings, mark=at position 0.33 with {\arrow{>}}}]
\draw[postaction={decorate}] (x1) -- (y2);
\draw[postaction={decorate}] (y2) -- (y1);
\draw[postaction={decorate}] (y1) .. controls (2, -0.55) .. (y3);
\draw[postaction={decorate}] (y3) -- (x1);
\draw[postaction={decorate}] (x2) .. controls (3, 1.55) .. (x4);
\draw[postaction={decorate}] (x4) -- (y5);
\draw[postaction={decorate}] (y6) -- (x6);
\draw[postaction={decorate}] (y4) -- (x2);
\foreach \x in {0,...,7} {
	\node[above=4pt] () at (x\x) {$x_{\x}$};
	\node[below=4pt] () at (y\x) {$y_{\x}$};
}
\foreach \x in {0,2,3,...,6}{
	\draw[ball color=black] (x\x) circle (3pt);
	\draw[ball color=black] (y\x) circle (3pt);
}
\foreach \x in {1,7}{
    \draw[ball color=white] (x\x) circle (3pt);
    \draw[ball color=white] (y\x) circle (3pt);
}
\end{scope}
\end{tikzpicture}
\end{center}
\medskip

\noindent
$F_8$: External pattern $\{x_1\}$
\begin{center}
\begin{tikzpicture}[x=1.5cm,y=1.5cm,scale=1]
\foreach \x in {0,...,7} {
	\coordinate (x\x) at (\x,1) {};
	\coordinate (y\x) at (\x,0) {};
}
\begin{scope}[thick, decoration={markings, mark=at position 0.6 with {\arrow{>}}}]
\draw[postaction={decorate}] (x5) -- (y7);
\draw[postaction={decorate}] (y5) -- (x6);
\end{scope}

\begin{scope}[thick, decoration={markings, mark=at position 0.67 with {\arrow{>}}}]
\draw[postaction={decorate}] (y6) .. controls (5, -0.55) .. (y4);
\end{scope}

\begin{scope}[thick, decoration={markings, mark=at position 0.33 with {\arrow{>}}}]
\draw[postaction={decorate}] (y7) .. controls (6, -0.55) .. (y5);
\draw[postaction={decorate}] (x6) -- (x5);
\draw[postaction={decorate}] (x1) -- (y3);
\draw[postaction={decorate}] (y3) -- (x3);
\draw[postaction={decorate}] (x3) -- (x4);
\draw[postaction={decorate}] (x4) -- (y6);
\draw[postaction={decorate}] (y4) .. controls (3, -0.55) .. (y2);
\draw[postaction={decorate}] (y2) -- (x2);
\draw[postaction={decorate}] (x2) -- (x1);
\foreach \x in {0,...,7} {
	\node[above=4pt] () at (x\x) {$x_{\x}$};
	\node[below=4pt] () at (y\x) {$y_{\x}$};
}
\foreach \x in {0,2,3,...,7}{
	\draw[ball color=black] (y\x) circle (3pt);
    \draw[ball color=black] (x\x) circle (3pt);
}
\draw[ball color=white] (x1) circle (3pt);
\draw[ball color=black] (y1) circle (3pt);
\end{scope}
\end{tikzpicture}
\end{center}
\medskip

\noindent
$F_9$: External pattern $\{x_0,x_1,y_0,y_1\}$.
\begin{center}
\begin{tikzpicture}[x=1.5cm,y=1.5cm,scale=1]
\foreach \x in {0,...,7} {
	\coordinate (x\x) at (\x,1) {};
	\coordinate (y\x) at (\x,0) {};
}

\begin{scope}[thick, decoration={markings, mark=at position 0.6 with {\arrow{>}}}]
\draw[postaction={decorate}] (y2) -- (x4);
\draw[postaction={decorate}] (y3) -- (x2);
\draw[postaction={decorate}] (x3) -- (y4);
\end{scope}

\begin{scope}[thick, decoration={markings, mark=at position 0.33 with {\arrow{>}}}]
\draw[postaction={decorate}] (y4) -- (y5);
\draw[postaction={decorate}] (y5) -- (x5);
\draw[postaction={decorate}] (x5) .. controls (4, 1.55) .. (x3);
\draw[postaction={decorate}] (x0) -- (y1);
\draw[postaction={decorate}] (y1) -- (x1);
\draw[postaction={decorate}] (x1) -- (y0);
\draw[postaction={decorate}] (y0) .. controls (1, -0.55) .. (y2);
\draw[postaction={decorate}] (x4) -- (y3);
\draw[postaction={decorate}] (x2) .. controls (1, 1.55) .. (x0);
\foreach \x in {0,...,7} {
	\node[above=4pt] () at (x\x) {$x_{\x}$};
	\node[below=4pt] () at (y\x) {$y_{\x}$};
}
\foreach \x in {0,1,6,7}{
	\draw[ball color=white] (x\x) circle (3pt);
	\draw[ball color=white] (y\x) circle (3pt);
}
\foreach \x in {2,...,5}{
	\draw[ball color=black] (x\x) circle (3pt);
	\draw[ball color=black] (y\x) circle (3pt);
}
\end{scope}
\end{tikzpicture}
\end{center}
\medskip

\section{Caps and centre piece for a $[4,2t]$-decomposition of $J_{2t+4}^*$, $t \equiv 1 \pmod{4}$} \label{Appendix:Cap_and_Continue_Piece_Picture}

Consider the left cap $(L_1, \ldots, L_9)$, centre piece $(C_1, \ldots, C_9)$ and right cap $(R_1, \ldots, R_9)$ used to construct an admissible $[2t,4]$-decomposition of $J_{2t+4}^*$ with external pattern $\mathcal{X}$, where $t \equiv 1 \pmod{4}$, $t \geq 5$.  For each $i \in \{1, \ldots, 9\}$, $L_i \oplus \sigma^2(C_i) \oplus \sigma^6(C_i) \oplus \cdots \oplus \sigma^{4s+2}(C_i) \oplus \sigma^{4s+6}(R_i)$ consists of one directed cycle of length $10+8s$ and one of length $4$.  In particular, $L_i \oplus \sigma^2(R_i)$ is an admissible $2$-regular graph with one directed cycle of length $10$ and one of length $4$, while  
$L_i \oplus \sigma^2(C_i) \oplus \sigma^6(R_i)$ consists of one directed cycle of length $18$ and one of length $4$.

In the figures below, for each $i \in \{1, \ldots, 9\}$, we specifically illustrate $L_i$, $\sigma^2(C_i)$ and $\sigma^6(R_i)$; these three digraphs combine to form a $[4,18]$-decomposition of $J_{22}^*$.  For each value of $i$, we state the external pattern of $L_i$ and $R_i$, and the internal pattern of $L_i$, $C_i$ and $R_i$.
\bigskip

\noindent
$L_1$, $C_1$, $R_1$: External pattern $\{y_1\}$, internal pattern $(y_0,x_0,\emptyset)$.  
\begin{center}

\end{center}

\section{Right caps for Lemma~\ref{GeneralFactors}}
\label{A:continue}
In this appendix, we give the remaining right caps required for the proof of Lemma \ref{GeneralFactors}.

For $F=[2s,2]$, we give an $(\mathcal{X}, s_0-1, 1, \{2\})$-right cap in each case. This means that each $R_i$ is the disjoint union of a directed cycle of length 2 and a directed path.
\begin{itemize}
\item $s_0=4:$
\[
%
\]
\end{itemize}

\pagebreak
For $F=[2s, 2^2]$, we give an $(\mathcal{X}, s_0, 1, \{2,2\})$-right cap in each case.
\begin{itemize}
\item $s_0=4:$
\[
%
\]
\end{itemize}
Finally, for $F=[2s, 4]$, we give an $(\mathcal{X}, s_0, 1, \{4\})$-right cap in each case. Note that in this case, $R_i$ is comprised of the disjoint union of a directed cycle of length 4 and a directed path. 
\begin{itemize}
\item $s_0=5:$
\[
%
\]
\end{itemize}

\section{Admissible decompositions of $J_{2m}^*$ with small cycle lengths}
\label{Small 12-decomps}
We give the directed cycles an admissible $F$-decomposition of $J_{2m}^*$ for each 
\[F \in \{[2,4], [2,6], [2^2,4], [2^2,6], [2^2,4^2], [2^3], [2^3,4], [4^2], [4^3], [4,6], [4,8], [6]\}.
\]
These decompositions prove Lemma~\ref{SmallFactors}.

\subsection*{$F=[2,4]$}
\[
%
\]

\subsection*{$F=[2,6]$}
\[
%
\]

\subsection*{$F=[2^2,4]$}
\[
%
\]

\subsection*{$F=[2^2,6]$}
\[
%
\]

\subsection*{$F=[2^2,4^2]$}

\[
%
\]

\subsection*{$F=[2^3]$}

\[
%
\]

\subsection*{$F=[2^3,4]$}

\[
%
\]

\subsection*{$F=[4^2]$}

\[
%
\]

\subsection*{$F=[4^3]$}

\[
%
\]

\subsection*{$F=[4,6]$}

\[
%
\]

\subsection*{$F=[4,8]$}

\[
%
\]

\subsection*{$F=[6]$}

\[
%
\]

\end{document}